\title{Large Prandtl Number Asymptotics in Randomly Forced Turbulent Convection}
\author{Juraj F\" oldes, Nathan Glatt-Holtz, Geordie Richards\\
  \scriptsize{emails: foldes@virginia.edu, negh@tulane.edu, geordie.richards@usu.edu}}
\date{}
\definecolor{Red}{rgb}{0.7,0,0.1}
\definecolor{Green}{rgb}{0,0.7,0}
\definecolor{labelkey}{rgb}{0,0,1}
\numberwithin{equation}{section}
\newtheorem{Thm}{Theorem}[section]
\newtheorem{Lem}{Lemma}[section]
\newtheorem{Prop}{Proposition}[section]
\newtheorem{Cor}{Corollary}[section]
\newtheorem{Rmk}{Remark}[section]
\newtheorem*{Thm*}{Theorem}
\newcommand{\pd}[1]{\partial_{#1}}
\newcommand{\indFn}[1]{1 \! \! 1_{#1}}
\newcommand{\E}{\mathbb{E}}
\newcommand{\Prb}{\mathbb{P}}
\newcommand{\RR}{\mathbb{R}}
\newcommand{\bfU}{\mathbf{u}}
\newcommand{\bfV}{\mathbf{v}}
\newcommand{\bfW}{\mathbf{w}}
\newcommand{\bfY}{\mathbf{y}}
\newcommand{\bfX}{\mathbf{x}}
\newcommand{\T}{T}
\newcommand{\prN}{Pr}
\newcommand{\Nu}{Nu}
\newcommand{\ra}{Ra}
\newcommand{\hatk}{\hat{\mathbf{k}}}
\newcommand{\DD}{\mathcal{D}}
\newcommand{\Trs}{\theta}
\newcommand{\Thep}{\theta^{\varepsilon}}
\newcommand{\Thept}{\tilde{\theta}^{\varepsilon}}
\newcommand{\Tho}{\theta^{0}}
\newcommand{\Uep}{\mathbf{u}^{\varepsilon}}
\newcommand{\Uept}{\tilde{\mathbf{u}}^{\varepsilon}}
\newcommand{\Uo}{\mathbf{u}^{0}}
\newcommand{\uep}{u^\varepsilon}
\newcommand{\thep}{\theta^\varepsilon}
\newcommand{\eps}{\varepsilon}
\newcommand{\Ra}{\mbox{Ra}}
\newcommand{\phep}{\phi^\eps}
\newcommand{\Vep}{\mathbf{v}^{\varepsilon}}
\newcommand{\Vept}{\tilde{\mathbf{v}}^{\varepsilon}}
\newcommand{\phet}{\tilde{\phi}^{\eps}}
\newcommand{\rab}{\tilde{Ra}}
\newcommand{\JJ}{\mathcal{J}}
\newcommand{\AAA}{\mathcal{A}}
\newcommand{\MD}{\mathfrak{D}}
\newcommand{\HH}{\mathbf{H}}
\newcommand{\VV}{\mathbf{V}}
\newcommand{\smc}{\gimel}
\begin{document}
\markboth{J. F\" oldes, N. Glatt-Holtz, G. Richards}
{Large Prandtl Number Asymptotics in Randomly Forced Turbulent Convection}

\maketitle

\begin{abstract}
  We establish the convergence of statistically invariant states for
  the stochastic Boussinesq Equations in the infinite Prandtl number
  limit and in particular demonstrate the convergence of the Nusselt
  number (a measure of heat transport in the fluid).  This is a
  singular parameter limit significant in mantle convection and for
  gasses under high pressure.  The equations are subject to a both
  temperature gradient on the boundary and internal heating in the
  bulk driven by a stochastic, white in time, gaussian forcing.  Here,
  the stochastic source terms have a strong physical motivation for
  example as a model of radiogenic heating.

  Our approach uses mixing properties of the formal limit system to
  reduce the convergence of invariant states to an analysis of the finite time
  asymptotics of solutions and parameter-uniform moment bounds.  Here,
  it is notable that there is a phase space mismatch between the
  finite Prandtl system and the limit equation, and we implement
  methods to lift both finite and infinite time convergence results to
  an extended phase space which includes velocity fields.  For the
  infinite Prandtl stochastic Boussinesq equations, we show that the
  associated invariant measure is unique and that the dual Markovian
  dynamics are contractive in an appropriate Kantorovich-Wasserstein
  metric.  We then address the convergence of solutions on finite time
  intervals, which is still a singular perturbation.  In the process
  we derive well-posed equations which accurately approximate the
  dynamics up to the initial time when the Prandtl number is large.

\end{abstract}

{\noindent \small
  {\it \bf Keywords:} Convective Turbulence, Stochastic Boussinesq Equations, 
      Large Prandtl Number Limit, Invariant Measures, Ergodicity, 
      Kantorovich-Wasserstein Metrics, Singular Perturbation Analysis.\\
  {\it \bf MSC2010:} 76R05, 60H15, 35R60, 37L40}

\setcounter{tocdepth}{1}
\tableofcontents

\newpage

\section{Introduction}

Buoyancy driven convection plays a central role in a wide variety of
physical processes: from earth's climate system to the internal
dynamics of stars.  As such it is of fundamental importance to
identify and predict robust statistical quantities in these complex
flows and to connect such statistics with the basic equations
governing their dynamics, for example the Boussinesq equations.  In
particular characterizing pattern formation, mean heat transport, and
small scale dynamics as a function of physical parameters and boundary
conditions remains a topic of intensive research theoretically,
numerically, and experimentally; see e.g.
\cite{BodenschatzPeschAhlers2000, Manneville2006,
  AhlersGrossmannLohse2009, LohseXia2010} for a broad overview of
recent developments.

It has long been understood that statistically invariant states of the
nonlinear partial differential equations of fluid dynamics provide
mathematical objects which are expected to contain various robust
statistical quantities found in turbulent fluid flows.  An ongoing
challenge is therefore to address the existence, uniqueness,
ergodicity, and dependence of these measures on parameters in a
variety of specific contexts.  While one may certainly pose such
questions for deterministic equations
(cf. \cite{FoiasManleyRosaTemam01}) the stochastic setting can be more
tractable given the regularizing effect of noise on the associated
probability distribution functions.  
Moreover, 
energy may be supplied to the system through both boundary or within 
the bulk of a fluid, the latter setting for instance models radioactive decay
processes in the earth's mantle; see
\cite{Roberts1967,TrittonZarraga1967,LuDoeringBusse2004,
  WhiteheadDoering2011, GoluskinSpiegel2012, BarlettaNield2012}. Both
sources can therefore have an essentially stochastic character in
situations of physical interest.

In this and a companion work,
\cite{FoldesGlattHoltzRichardsWhitehead2015}, we study statistically
invariant states of the stochastically driven Boussinesq equations
\begin{align}
  \frac{1}{\prN}&(\pd{t} \bfU + \bfU \cdot \nabla \bfU ) - \Delta \bfU 
   =  \nabla p  + \ra \hatk \T , \quad \nabla \cdot \bfU = 0,
  \label{eq:B:eqn:vel}\\
  &d \T + \bfU \cdot \nabla \T dt = \Delta \T dt +  \sum_{k=1}^{N} \sigma_k dW^k,
  \label{eq:B:eqn:temp}
\end{align}
for the (non-dimensionalized) velocity field $\bfU = (u_1,u_2,u_3)$,
pressure $p$, and temperature $T$ of a buoyancy driven fluid. The
system \eqref{eq:B:eqn:vel}--\eqref{eq:B:eqn:temp} evolves in a three
dimensional domain
$(x,y,z) = (\mathbf{x},z) \in\DD = [0,L]^{2} \times [0,1]$ and is
supplemented with the boundary conditions
\begin{align}
   \bfU_{|z = 0} = \bfU_{|z = 1} = 0, \quad T_{|z = 0} = \rab, \;  T_{|z = 1} = 0, \quad
   \bfU, T \textrm{ are periodic in } \mathbf{x} = (x, y).
   \label{eq:bc}
\end{align}
The unitless physical parameters in the problem are the \emph{Prandtl}
number $\prN$ and \emph{Rayleigh} numbers $\ra$ and $\rab$; see
Remark~\ref{rmk:Parameters:dep} and
\cite{FoldesGlattHoltzRichardsWhitehead2015} for further details
concerning this choice of nondimensionalization. The unit vector
$\hatk = (0,0,1)$ points in the direction of the gravitational force.
The driving noise in \eqref{eq:B:eqn:temp} is given by a collection of
independent white noise processes $dW^k = dW^k(t)$ acting spatially through 
the functions $\sigma_k = \sigma_k(x,y,z)$ which form a complete
orthogonal basis of eigenfunctions (ordered with respect to
eigenvalues) of the Laplace operator on $\DD$ supplemented with
homogeneous Dirichlet boundary conditions for $z = 0, 1$ and periodic
in $\mathbf{x}= (x,y)$.  The stochastic terms in \eqref{eq:B:eqn:temp}
have been normalized so that
\begin{align}
\label{cond:mode}
\sum_{k=1}^{N} \|\sigma_k\|_{L^2(\DD)}^2 = 1 \,,
\end{align}
with the strength of the body forcing expressed in terms of the
physical parameters $\ra$ and $\rab$; see
\eqref{eq:physical:parameters} below.

Our principal aim here is to establish convergence properties of
statistically invariant states of \eqref{eq:B:eqn:vel}--\eqref{eq:bc}
to invariant measures of the active scalar equation
\begin{align}
   &- \Delta \bfU =  \nabla p  + \ra \hatk \T , \quad \nabla \cdot \bfU = 0,
  \label{eq:B:vel:inf}\\
  &d \T + \bfU \cdot \nabla \T dt = \Delta \T dt +  \sum_{k=1}^{N} \sigma_k dW^k
  \label{eq:B:temp:inf}
\end{align}
in the \emph{Large Prandtl number limit} that is when $\prN$ in
\eqref{eq:B:eqn:vel} diverges to $\infty$.  Here
\eqref{eq:B:vel:inf}--\eqref{eq:B:temp:inf} is complemented with
boundary conditions as in \eqref{eq:bc}.  Note that $\bfU$ and $p$
are determined by $T$ according to \eqref{eq:B:vel:inf}. We
write this functional dependence as $\bfU = M(T)$ and denote $L(T)=(M(T),T)$.

The analysis of convection in the large Prandtl number limit is of
basic interest in a variety of physical contexts, most notably in
modeling certain portions of the earth's mantle and for convection in
gasses under high pressure, where the Prandtl number can reach the
order of $10^{24}$, see \cite{ConstantinDoerin1999,
  ConstantinDoering2001, OttoSeis2011}.  It is worth emphasizing that
the system \eqref{eq:B:vel:inf}--\eqref{eq:B:temp:inf} has very
complex dynamics even without stochastic forcing when the Rayleigh
number(s) are sufficiently large; see
\cite{BreuerHansen09,ConstantinDoerin1999, BodenschatzPeschAhlers2000,
  ConstantinDoering2001, Wang2004a, park2006,
  AhlersGrossmannLohse2009, LohseXia2010, OttoSeis2011}.

\subsection*{Overview of the Main Results}

Let us now present a heuristic version of our main results; for the
precise formulation see Theorem \ref{Thm:Main:Thm:Precise} below.
Recall that for any function $F$ and measure $\mu$, the push-forward
of $\mu$ under $F$ is given by $F\mu := \mu \circ F^{-1}$.

\begin{Thm} \label{thm:main:result:intro} Fix any $\ra, \rab >0$ and
  consider \eqref{eq:B:eqn:vel}--\eqref{eq:bc} and
  \eqref{eq:B:vel:inf}--\eqref{eq:B:temp:inf} with
  $N \geq N(\ra, \rab)$ independently forced directions in the
  temperature equation.  Then
  \eqref{eq:B:vel:inf}--\eqref{eq:B:temp:inf} possesses a unique
  ergodic invariant measure $\mu_\infty$.  Let
  $\{\mu_{\prN}\}_{\prN \geq 1}$ be any sequence of
  statistically invariant states associated to
  \eqref{eq:B:eqn:vel}--\eqref{eq:bc} satisfying a uniform exponential
  moment bound (see \eqref{eq:exp:yomoment:cond} below noting that
  $\epsilon : = \frac{1}{\Pr}$).\footnote{Note
    that usual fundamental difficulties concerning the well-posedness
    of 3D Navier-Stokes apply to
    (\ref{eq:B:eqn:vel})--(\ref{eq:B:eqn:temp}) and so, following
    \cite{FlandoliGatarek1}, we consider only weak
    solutions whose laws do not change in time.  The uniform
    exponential moment condition is analogous to a finite
    energy criterion for weak solutions of the 3D Navier-Stokes
    equations.  In \cite{FoldesGlattHoltzRichardsWhitehead2015} we
    have established the existence of such states $\mu_{\prN}$,  see
    Proposition~\ref{prop:existence:sol:BEs} below for a precise
    restatement.  In particular we cannot rule out the
    existence of a collection $\{\mu_{\prN}\}_{\prN \in \mathbb{N}}$ which does not satisfy 
    \eqref{eq:exp:yomoment:cond}.  Observe that none of these
    difficulties arise the 2D case.}  Then $\mu_{\prN}$ converges to
  $L_{\#}\mu_\infty$ in a suitable metric. In particular, for any
  sufficiently regular observable $\phi$ on the $(\bfU,T)$ phase
  space,
\begin{align}
 \left| \int \phi(\bfU,T) d \mu_{\prN} -  \int \phi(L(T)) d \mu_{\infty} \right| \leq C (\prN)^{-q} \,,
 \label{eq:conv:to:inf:prandlt:intro}
\end{align}
where $C = C(\phi, \ra, \rab), q = q(\ra,\rab) > 0$ are independent of $\prN$ and $q$ is independent of $\phi$.
\end{Thm}

\noindent The proof of Theorem~\ref{thm:main:result:intro} contains several
further results of independent interest.  Firstly, we show that the
Markovian dynamics of probability laws for the infinite Prandtl
system, \eqref{eq:B:vel:inf}--~\eqref{eq:B:temp:inf} is contractive in
a suitable Wasserstein distance; see Theorem~\ref{Thm:Main:Thm:Precise},
 \eqref{eq:ext:conv} below.  Secondly, we demonstrate that the finite
time dynamics converge in the limit as $\Pr \to \infty$.  

Note that our results 
do not rely on the well-posedness of \eqref{eq:B:eqn:vel}--\eqref{eq:B:eqn:temp}  
or make any assertions concerning the convergence of 
 \eqref{eq:B:eqn:vel}--\eqref{eq:B:eqn:temp} to the formal limit
\eqref{eq:B:vel:inf}--\eqref{eq:B:temp:inf} for small times.
On the other hand, as notable biproduct of our convergence analysis, we
derive a well posed approximation of
(\ref{eq:B:eqn:vel})--(\ref{eq:B:eqn:temp})
up to the initial time $t =0$ which is valid for large values of $\Pr$.
See Section~\ref{eq:corrector} and Theorem~\ref{lem:cor:to:LargePrandlt:og}
below for further details.

It is also worth emphasizing that our proof of Theorem
\ref{thm:main:result:intro} applies essentially verbatum to the
two-dimensional version of \eqref{eq:B:eqn:vel}--\eqref{eq:bc}, where
the horizontal variable $\mathbf{x}$ is one-dimensional.  Here
\emph{all} the statistically invariant states of the full system
satisfy the uniform moment bound \eqref{eq:exp:yomoment:cond}.
Furthermore, in collaboration with Whitehead
\cite{FoldesGlattHoltzRichardsWhitehead2015}, the authors have
established that with $N=\infty$ and $\prN = \prN(\ra,\rab)>0$
sufficiently large, the 2D version of
\eqref{eq:B:eqn:vel}--\eqref{eq:bc} possesses a unique ergodic
invariant measure $\mu_{\prN}$.

An empirical quantity of particular interest in convection is the
\textit{Nusselt number} $Nu$, a measurement of the convective heat
transfer, which is defined in terms of a statistical average (e.g. a
time average) of the observable
$\phi_{Nu}= \int_{\mathcal{D}}u_{2} T\, dx$.\footnote{Here $u_2$
  represents the vertical velocity component for the 2D version of
  \eqref{eq:B:eqn:vel}--\eqref{eq:bc}.}  However, in the
deterministic case, even in the turbulent regime of $\Ra \gg 1$,
$Nu$ depends on initial condition, both at finite
and infinite values of $\Pr$ and it is unclear that $Nu$
is continuous at $\prN = \infty$.   We show that
the addition of a stochastic perturbation avoids these concerns.

\begin{Cor} \label{cor:main}
For any $\ra, \rab >0$ and any sufficiently large $\prN = \prN(\ra,\rab)>0$ the system  \eqref{eq:B:eqn:vel}--\eqref{eq:bc} posed in two space dimensions
  with $N=\infty$ and \eqref{cond:mode}  possesses a
  unique ergodic invariant measure $\mu_{\prN}$, and the Nusselt
  number $Nu$ given by (cf. \cite[Theorem
  1.4]{FoldesGlattHoltzRichardsWhitehead2015})
\begin{align}
\label{eq:nu:unique}
\Nu = (\Nu)_{\prN} := 1 + \frac{1}{|\mathcal{D}|}\int \int_{\mathcal{D}}u_{2} T\, dx\,d\mu_{\prN}(\bfU,T)
\end{align}
satisfies
$$
\lim_{\prN \rightarrow \infty} (\Nu)_{\prN} = (\Nu)_{\infty} \,.
$$ 
Note that $(\Nu)_{\infty}$ is defined by \eqref{eq:nu:unique} relative to the unique ergodic invariant measure $\mu_{\infty}$ of
\eqref{eq:B:vel:inf}--\eqref{eq:B:temp:inf}.
\end{Cor}

Theorem~\ref{thm:main:result:intro} and Corollary \ref{cor:main} may
be seen as complementary to a series of recent works
\cite{Wang2004a,Wang2004b,Wang2005,Wang2007,Wang2008a,Wang2008b} which
address large Prandtl number asymptotics for the Boussinesq system in
a deterministic framework.  Here we show that the addition of
stochastic terms allows for stronger convergence results, but the
proofs require a different framework.  In particular, Corollary
\ref{cor:main} resolves a conjecture of Wang \cite{Wang2008b} by
confirming that stochastic forcing stabilizes the Nusselt number in
the infinite Prandtl number limit.

\subsection*{Methods of Analysis}

The starting point of our analysis is to establish a strict
contraction property for the Markov semigroup $\{P_t^0\}_{t \geq 0}$
associated to the formal limit system
\eqref{eq:B:vel:inf}--\eqref{eq:B:temp:inf}.  We show that for,
some $t_* > 0$ sufficiently large and for any probability measures
$\mu, \tilde{\mu}$ on the phase space associated with the $T$ component of
\eqref{eq:B:vel:inf}--\eqref{eq:B:temp:inf}, one has
\begin{align}
  \rho( \mu P_{t_*}^{0}, \tilde{\mu}P_{t_*}^{0} ) \leq \frac{1}{2} \rho(\mu, \tilde{\mu}),
  \label{eq:wash:approach:gap}
\end{align}
where $\rho$ is an appropriately chosen Kantorovich-Wasserstein
metric. See \eqref{eq:wash:dist:gen} and
Theorem~\ref{Thm:Main:Thm:Precise}, (i) for a precise formulation.  

The bound \eqref{eq:wash:approach:gap} is crucial since it allows us to reduce the proof of the
convergence of statistically invariant states in the infinite Prandtl
limit to the convergence of solutions on finite time intervals.
Indeed, suppose that $\mu_0$ is the (unique) 
invariant measure for (\ref{eq:B:vel:inf})--(\ref{eq:B:temp:inf})
and for $\eps > 0$ let $\mu_\eps$ be the $T$ component of any stationary solution
of (\ref{eq:B:eqn:vel})--(\ref{eq:B:eqn:temp}) with $\eps := 1/\Pr$.  Utilizing the
invariance of $\mu_0$ and \eqref{eq:wash:approach:gap}
we find
\begin{align}
\label{eq:strat:1}
  \rho(\mu_\eps, \mu_0) &=  \rho(\mu_\eps, \mu_0 P_{t_*}^{0})
  \leq \rho(\mu_\eps , \mu_\eps P_{t_*}^{0})
         +\rho(\mu_\eps P_{t_*}^{0}, \mu_0 P_{t_*}^{0}) 
  \leq \rho(\mu_\eps , \mu_\eps P_{t_*}^{0}) + \frac{1}{2} \rho(\mu_\eps, \mu_0)\,,
\end{align}
and consequently 
\begin{align*}
 \rho(\mu_\eps, \mu_0) \leq 2  \rho(\mu_\eps , \mu_\eps P_{t_*}^{0})  \,.
\end{align*}
By properties of the Wasserstein metric, specifically
(\ref{eq:eq:def:wd}), and using the stationarity of the solutions
corresponding to $\mu_\eps$ we therefore obtain the estimate
\begin{align}
\label{eq:strat:2}
\rho(\mu_\eps, \mu_0)  \leq 2\E\rho(T^{\eps}(t_*),T^{0,\eps}(t_*)).
\end{align}
We have thus bounded the distance between invariant states by the mean
distance between solutions at a fixed finite time $t_*$.  Note
that these two solutions satisfy identical 
initial conditions which are distributed as $\mu_\eps$.

Recently the strategy leading to \eqref{eq:strat:2} has proven
effective for establishing the convergence of statistically invariant
states for a variety of problems; see \cite{HairerMattingly2008,
  HairerMajda2010, KuksinShirikian12,
  FoldesFriedlanderGlattHoltzRichards2016, CerraiGlattHoltz2018}.
However, in order to implement this approach, one typically faces
several major challenges.  A first challenge is to prove the
contraction estimate \eqref{eq:wash:approach:gap}, where the
semigroup $\{P_t^0\}_{t \geq 0}$ corresponds to
\eqref{eq:B:vel:inf}--\eqref{eq:B:temp:inf}.  Moreover, in our
setting, it is desirable to lift this contraction property to the
extended phase space involving both the velocity $\bfU$ and
temperature components $T$ of our system.  This is particularly
relevant in view of the physical significance of the Nusselt number, a
quantity involving both $\bfU$ and $T$ as in \eqref{eq:nu:unique}.  A
second challenge  to show that
$\E\rho(T^{\eps}(t_*),T^{0,\eps}(t_*)) \to 0$ as $\eps \to 0$
in order to take advantage of \eqref{eq:strat:2}.  This task
requires suitable $\epsilon = \prN^{-1}$ uniform
moment bounds on the stationary statistics $\mu^\eps$ and 
finite time convergence results for solutions in the limit as
$\prN \to \infty$.  As we describe presently the results established
here require new ideas in comparison to the aforementioned related
works.  This is partially due to the presence of
non-homogeneous boundary conditions for
\eqref{eq:B:vel:inf}--\eqref{eq:B:temp:inf} and to the singular nature
of the limit from \eqref{eq:B:eqn:vel}--\eqref{eq:B:eqn:temp} to
\eqref{eq:B:vel:inf}--\eqref{eq:B:temp:inf}.

Regarding the first challenge, guided by the classical
Doob-Khasminskii Theorem
\cite{Doob1948,Khasminskii1960,ZabczykDaPrato1996} and as encompassed
by the more recent developments in \cite{HairerMattingly2008,
  HairerMajda2010,HairerMattinglyScheutzow2011}, one can establish a
contraction of the type \eqref{eq:wash:approach:gap} when the Markov
semigroup is smoothing, suitable moment bounds hold, and there is some
form of irreducibility in the dynamics.  The question of smoothing for
the Markov semigroup can be translated to a control problem; see
\eqref{eq:grad:system:2} below.  In our setting, when the number of
forced directions $N=N(\ra,\rab)$ is sufficiently large, an
appropriate control can be found through Foias-Prodi type
considerations \cite{FoiasProdi1967}. Since
\eqref{eq:B:vel:inf}--\eqref{eq:B:temp:inf} may be seen as an
advection diffusion system with $\bfU$ being two derivatives smoother
than $T$, such a strategy largely repeats the approach used in
previous works on the 2D stochastic Navier-Stokes equations
\cite{EMattinglySinai2001, HairerMattingly06, HairerMattingly2008,
  KuksinShirikian12}.  On the other hand establishing suitable moment
bounds is more delicate due to the non-homogenous boundary conditions
imposed in \eqref{eq:bc} and requires a careful use of the maximum
principle along with exponential martingale estimates.  These bounds
have been carried out in our companion work
\cite{FoldesGlattHoltzRichardsWhitehead2015}.  The main obstacle to
demonstrating \eqref{eq:wash:approach:gap} is to establish
irreducibility, which does not follow from the approach set out
in previous works, e.g. \cite{EMattingly2001,HairerMattingly06,
  ConstantinGlattHoltzVicol2013,FoldesGlattHoltzRichardsThomann2013,
  FoldesFriedlanderGlattHoltzRichards2016}.  This is because the
system \eqref{eq:B:vel:inf}--\eqref{eq:B:temp:inf} with its stochastic
terms removed can have highly non-trivial dynamics, see
\cite{ConstantinDoerin1999, ConstantinDoering2001, Wang2004a,
  park2006, OttoSeis2011}.  We show that, despite this complication,
the support of every invariant measure of
\eqref{eq:B:vel:inf}--\eqref{eq:B:temp:inf} contains the basic
conductive state.  Indeed we establish with the use of another
Foias-Prodi bound that a Girsanov shift of
\eqref{eq:B:vel:inf}--\eqref{eq:B:temp:inf} converges to the
conductive state with positive probability.  We then employ moment
estimates and stopping time arguments to translate this non-zero
probability back to the original system
\eqref{eq:B:vel:inf}--\eqref{eq:B:temp:inf} yielding the desired
irreducibility.

In order to establish convergence of invariant states on the extended
phase space, we adapt a methodology from recent joint work of the
authors with Friedlander
\cite{FoldesFriedlanderGlattHoltzRichards2016} which enhances
\eqref{eq:wash:approach:gap} to a ``lifted'' contraction property with
respect to a carefully chosen metric (see Lemma \ref{lem:lift} below).
By invoking this lifted contraction property, and appropriately
modifying the argument in \eqref{eq:strat:1}--\eqref{eq:strat:2}, the
convergence of invariant states as $\prN \rightarrow \infty$ reduces
to establishing the convergence of solutions of
\eqref{eq:B:eqn:vel}--\eqref{eq:B:eqn:temp} to those of
\eqref{eq:B:vel:inf}--\eqref{eq:B:temp:inf} at a fixed time $t_* > 0$,
independent of $\eps$, when the initial conditions have the same
distribution in temperature only.

The second major challenge regards the convergence of solutions of
\eqref{eq:B:eqn:vel}--\eqref{eq:B:eqn:temp} on finite time intervals
as $\prN \to \infty$ for which we develop a suitable asymptotic
analysis.  This is a non-trivial task since the small parameter
$1/\prN$ lies in front of the time derivative terms in
\eqref{eq:B:eqn:vel}.  Moreover the convergence analysis in
\cite{Wang2004a,Wang2004b,Wang2005,Wang2007,Wang2008a,Wang2008b} for a
deterministic analogue of \eqref{eq:B:eqn:vel}--\eqref{eq:B:eqn:temp}
requires significant modification.  In particular these references
crucially use higher temporal regularity properties which are missing
in our stochastic setting. As a
substitute we derive a stochastic evolution equation for the velocity
component and use martingale properties of associated It\=o integrals.
Our analysis then takes advantage of uniform moment estimates from
\cite{FoldesGlattHoltzRichardsWhitehead2015}, some previously
unobserved cancellations in certain error terms and delicate stopping
time arguments.

Analogous to the results in \cite{Wang2004a, Wang2004b, Wang2005,
  Wang2007, Wang2008a, Wang2008b} we derive an `intermediate system',
which we refer to as the `corrector'.  We show rigorously that this
system approximates the finite Prandtl system in the velocity equation
over bounded time intervals up the initial time; cf
Theorem~\ref{lem:cor:to:LargePrandlt:og}.  While this corrector system
is of independent interest we also we provide a somewhat simpler and
more direct analysis of the convergence of
\eqref{eq:B:eqn:vel}--\eqref{eq:B:eqn:temp} to
\eqref{eq:B:vel:inf}--\eqref{eq:B:temp:inf} which well approximates
the infinite Prandtl system after an $O(1/Pr)$ time transient. Indeed,
this more direct approach is sufficient for the upper bound in
\eqref{eq:strat:2} since this bound only involves a fixed time
$t_*> 0$.

\subsection*{Manuscript Organization}
The manuscript is organized as follows.  In
Section~\ref{sec:Math:setting} we introduce the rigorous mathematical
setting of the stochastic Boussinesq equations,
\eqref{eq:B:eqn:vel}--\eqref{eq:bc}, which serves as a foundation
for the rest of the analysis.  We also introduce the formalities
of the Kantorovich-Wasserstein metric in Section
\ref{sec:Wash:met:contraction}, and provide a rigorous formulation of
our main results in Section \ref{sec:main:res}.
Section~\ref{sec:reduction} describes core of our strategy that
reduces the question of convergence to finite time asymptotics
and uniform moment bounds.
Section~\ref{sec:mixing:inf:prandlt} is devoted to establishing the
contraction \eqref{eq:wash:approach:gap} for the infinite Prandtl
system \eqref{eq:B:vel:inf}--\eqref{eq:B:temp:inf}.  In
Section~\ref{sec:finite:time:conv} we carry out the finite time
convergence analysis.  The section concludes
with a derivation and analysis of the intermediate corrector system.
In Section \ref{sec:conv:nus}
we establish convergence of the Nusselt number.  Finally two  Appendices
recall various elements essentially contained in previous works that
we have used in our analysis.
Appendix~\ref{sec:mom:est:drift:diff:eqn} is devoted to
details for various moment estimates from
\cite{FoldesGlattHoltzRichardsWhitehead2015} for a class of
drift-diffusion equations which we use to bound
\eqref{eq:B:vel:inf}--\eqref{eq:B:temp:inf}.  In
Appendix~\ref{sec:grad:est:markov} we outline gradient estimates on the
Markov semigroup corresponding to
\eqref{eq:B:vel:inf}--\eqref{eq:B:temp:inf} which are carried out in a
similar fashion to e.g. \cite{HairerMattingly06}.

\section{Mathematical Preliminaries and Main Results}
\label{sec:Math:setting}

We begin our analysis of the stochastic Boussinesq Equations by
recalling some details of their mathematical setting.  The section
concludes with a mathematically precise restatement of
Theorem~\ref{thm:main:result:intro}. Here and below we implicitly
assume that $C, c, C_0$ etc. are constants depending on the domain
$\mathcal{D}$ with any other dependency indicated explicitly.

For the forthcoming analysis it is convenient to consider an
equivalent homogeneous, form of the stochastic Boussinesq Equations.
Introducing the `small parameter' $\eps = \prN^{-1} > 0$ and making
the change of variable $\theta^\eps = T - \rab(1 - z)$ we rewrite
\eqref{eq:B:eqn:vel}--\eqref{eq:B:eqn:temp} as\footnote{Note that we
  have implicitly modified the pressure in \eqref{eq:B:eqn:vel:mod} by
  $\ra\rab(z-\frac{1}{2}z^2)$ since
  $(1-z)\hatk=\nabla (z-\frac{1}{2}z^2)$.}
 \begin{align}
   &\eps(\pd{t} \Uep + \Uep \cdot \nabla \Uep) - \Delta \Uep
   = \nabla \tilde{p}^\eps  + \ra \hatk \Thep , \quad \nabla \cdot \Uep = 0,
   \label{eq:B:eqn:vel:mod}\\
 &d \Thep + \Uep \cdot \nabla \Thep dt = \rab \cdot \uep_3 dt + \Delta \Thep dt + \sum_{k =1}^{N} \sigma_k d  W^k,
\label{eq:B:theta:eps}
\end{align}
supplemented with the homogenous boundary conditions
\begin{align}
   \bfU^\eps_{|z = 0} = \bfU^\eps_{|z = 1} = 0, \quad \Thep_{|z = 0} =   \Thep_{|z = 1} = 0, \quad
   \bfU^\eps, \Thep \textrm{ are periodic in } \mathbf{x} = (x, y).
   \label{eq:bc:homo}
\end{align}
Here, in reference to the $\rab \cdot \uep_3$ term in \eqref{eq:B:theta:eps} recall that $\Uep = (\uep_1, \uep_2, \uep_3)$.
The corresponding infinite Prandtl system ($\eps = 0$) is given by
\begin{align}
 -\Delta \Uo &= \nabla \tilde{p} + \ra \hatk \Tho ,
 \quad \nabla \cdot \Uo = 0,
  \label{eq:vel:zero}\\
  d \Tho + \Uo \cdot \nabla \Tho dt &= \rab \cdot u^{0}_3 dt + \Delta \Tho dt + \sum_{k =1}^{N} \sigma_k dW^k,
    \label{eq:theta:zero}
\end{align}
again with initial conditions $\Tho(0) = \Tho_0$ and boundary conditions as in \eqref{eq:bc:homo}.
\begin{Rmk}
 Notice that we do not prescribe an initial condition for $\Uo$ in \eqref{eq:vel:zero}--\eqref{eq:theta:zero} as this 
 component does not satisfy an independent evolution equation.  Indeed \eqref{eq:vel:zero}-\eqref{eq:theta:zero}
can be rewritten as
\begin{align}
  d \Tho + (M \Tho) \cdot \nabla \Tho dt = \rab  (M \Tho)_3 dt + \Delta \Tho dt + \sum_{k =1}^{N} \sigma_k dW^k,
    \label{eq:theta:zero:Juraj:form}
\end{align}
where the constitutive law $M$ recovers $\bfU$ from $\theta$ according to
\eqref{eq:vel:zero} as in \eqref{eq:inf:pr:conts} below.
\end{Rmk}
\begin{Rmk}
  The systems \eqref{eq:B:eqn:vel:mod}--\eqref{eq:B:theta:eps} and
  \eqref{eq:vel:zero}-\eqref{eq:theta:zero} can be reformulated in
  terms of $T = \Thep + \rab(1-z)$, which satisfies
  \eqref{eq:B:eqn:vel}--\eqref{eq:B:eqn:temp} or
  \eqref{eq:B:vel:inf}--\eqref{eq:B:temp:inf}, respectively, and has boundary
  conditions \eqref{eq:bc}.  Our analysis makes use of
  both of these formulations.
\end{Rmk}
\begin{Rmk} \label{rmk:Parameters:dep}
As noted above, parameters in the problem are the Prandtl ($\prN = \eps^{-1}$) and Rayleigh numbers ($\ra$, $\rab$),
which are unit-less.   In terms of basic physical quantities of interest we have that 
\begin{align}
 \eps^{-1} = \prN = \frac{\nu}{\kappa}, \quad
 	\ra=  \frac{g \alpha  \gamma h^{5/2}}{\nu \kappa^{3/2}}, \quad
 	\rab = \frac{\sqrt{\kappa h} (T_b-T_t)}{\gamma},
	\label{eq:physical:parameters}
\end{align}
where $\nu$ is the kinematic viscosity, $\kappa$ the thermal
diffusivity, $g$ the gravitational constant, $\alpha$ the coefficient
of thermal expansion, $h$ the distance between the confining plates,
$T_b - T_t$ the temperature differential, and
$\gamma= \mathcal{H} /\rho c$ the intensity $\mathcal{H}$ of the
volumetric heat flux normalized by the density $\rho$ and specific
heat $c$ of the fluid.  We refer the interested reader to
\cite{FoldesGlattHoltzRichardsWhitehead2015}, where the dimensionless
form of the stochastically driven Boussinesq equations is derived.
\end{Rmk}

\subsection{Functional Setting of the Boussinesq Equations}
\label{sec:funct:setting}

We next define the phase space for the Boussinesq equations, which is
very close to the classical framework for the Navier-Stokes equations;
see e.g. \cite{ConstantinFoias88, Temam2001} for further details.

We define $\HH := H_1 \times H_2$
as the phase space for  \eqref{eq:B:eqn:vel:mod}--\eqref{eq:bc:homo},
where
\begin{align*}
  H_1 &:= \{ \bfU \in ( L^2(\DD))^3 : \nabla \cdot \bfU = 0, \bfU\cdot \mathbf{n}_{| z = 0,1} = 0, \bfU \textrm{ is periodic in $\bfX$}  \}, \\
  H_2 &:= \{  \Trs \in L^2(\DD) : \Trs \textrm{ is periodic in $\bfX$}  \}
\end{align*}
and we denote by $H = H_2$ the phase space for
\eqref{eq:vel:zero}--\eqref{eq:theta:zero}.  The spaces $\HH$ and $H$
are endowed with the standard $L^2$-norm and we denote each of them by
$\| \cdot \|$ as the appropriate meaning will be clear from the
context.\footnote{Below will also consider the weighted metrics
  \eqref{eq:metric:def}, \eqref{eq:rho:ext:PS} which generate an
  equivalent topology on $\HH$ and $H$ but are more suitable for the
  convergence of measures in the associated Wasserstein metric.}  All
other norms are written as $\| \cdot \|_{\mathbb{X}}$ below for a
given space $\mathbb{X}$.  We define $H^1$ type spaces as
\begin{align*}
  V_1 &:= \{ \bfU \in ( H^1(\DD))^3 : \nabla \cdot \bfU = 0, \bfU_{| z = 0,1} = 0, \bfU \textrm{ is periodic in $\bfX$}  \}, \\
  V_2 &:= \{  \Trs \in H^1(\DD) :\Trs_{| z = 0,1} =0,  \Trs \textrm{ is periodic in $\bfX$}  \}.
\end{align*}
Let $\VV= V_1 \times V_2$ and $V = V_2$.  We will sometimes consider
the $L^p(\mathcal{D})$ spaces of $p$-integrable functions for
$p \in [1, \infty]$ and endow these spaces with their standard norms.

In what follows we frequently project or lift the dynamics to account
for the phase space mismatch between
(\ref{eq:B:eqn:vel:mod})--(\ref{eq:B:theta:eps}) and
(\ref{eq:vel:zero})--(\ref{eq:theta:zero}).  We define
\begin{align}
  \Pi: \HH  \to H_2 \text{ to be the projection onto the $\Trs$ component of $\HH$.}
\label{eq:def:proj}
\end{align}
Associated with the limit system (\ref{eq:vel:zero})--(\ref{eq:theta:zero})
we have the constitutive law
\begin{align}
  M(\theta) = \ra A^{-1} P \theta \hat{k}
  \label{eq:inf:pr:conts}
\end{align}
where $A$ is the Stokes operator and $P$ the Leray projector. 
In other words $\bfU = M (\theta)$ is the solution of
\begin{align*}
-\Delta \bfU = \nabla \tilde{p} + \ra \hatk \theta, \quad \nabla \cdot \bfU = 0; 
\end{align*}
see Section~\ref{sec:stokes} and in particular \eqref{eq:stokes:steady} 
below.  We define the `lifting map' $L: H \to \HH$
from the temperature component to the extended phase space 
\begin{align}
  L (\theta) = (M(\theta), \theta).
  \label{eq:lifted:map}
\end{align}

Finally we denote $\Pr(\mathbb{X})$ as the space of Borel probability
measures on a given complete metrizable space $\mathbb{X}$, typically
$H, \HH$ etc.  For $\mu \in \Pr(\HH)$, we take
$\Pi \mu(\cdot) = \mu(\Pi^{-1}(\cdot))$ to be the push-forward of
$\mu$ by $\Pi$.  Similarly $L \mu$ is the push-forward of $\mu$ by $L$
when $\mu \in \Pr(H)$.

We have the following general results concerning the existence and uniqueness of 
solutions of  \eqref{eq:B:eqn:vel:mod}--\eqref{eq:bc:homo} and \eqref{eq:vel:zero}--\eqref{eq:theta:zero}:

\begin{Prop}[Existence, Uniqueness and Continuous Dependence of Solutions on Data]
\label{prop:existence:sol:BEs}
Fix any values $\Ra, \tilde{\Ra} > 0$.
\begin{itemize}
\item[(i)] For every $\epsilon > 0$ and any given $\mu^0 \in \Pr(\HH)$
  with
  $\int (\|\bfU\|^2 + \| \theta\|^2) d \mu^0(\bfU, \theta) < \infty$
  there exists a stochastic basis
  $\mathcal{S} = (\Omega, \mathcal{F}, \{\mathcal{F}_t\}_{t \geq 0},
  \Prb, W)$
  upon which is defined an $\HH$-valued stochastic process
  $(\bfU^\eps, \Trs^\eps)$ with the regularity
\begin{align*}
   (\bfU^\eps, \Trs^\eps) \in L^2(\Omega; L^2_{loc}([0,\infty); \VV) \cap L^\infty_{loc}([0,\infty); \HH)),
\end{align*}
which is weakly continuous in $\HH$, adapted to
$\{\mathcal{F}_t\}_{t \geq 0}$, satisfies
\eqref{eq:B:eqn:vel:mod}--\eqref{eq:B:theta:eps} weakly and such that
$ (\bfU^\eps(0), \Trs^\eps(0))$ is distributed as $\mu^0$.  We say
that such a pair $(\mathcal{S}, (\bfU^\eps, \theta^\eps))$ is \emph{a
  weak-martingale solution of
  \eqref{eq:B:eqn:vel:mod}--\eqref{eq:bc:homo}.}  If, for some
$p \geq 2$, and $\eta>0$,
 \begin{align}
    \int_{\HH} \exp(\eta ( \| \bfU \|^2 +  \|\theta\|^2_{L^p})) d\mu^0(\bfU, \theta) < \infty ,
    \label{eq:exp:yomoment:cond:IC}
  \end{align}
  there exists $\eta_0>0$ and a weak martingale solution $(\mathcal{S}, (\bfU^\eps, \theta^\eps))$ such that
    \begin{align}
    \E \exp\left(\eta_{0}\left( 
      \sup_{s \in [0,t]} (\|\bfU^\eps\|^2 + \|\Trs^\eps\|^2_{L^p})
        + \int_0^t (\| \nabla  \bfU^\eps \|^2 + \| \nabla \Trs^\eps \|^2) ds\right)
        \right) \leq C < \infty
    \label{eq:exp:yo:yo:m:c}
  \end{align}
  for each $t > 0$, where $C>0$ is a constant independent of $\eps\in (0,1]$.
  
\item[(ii)] Additionally, for any $\eps > 0$, there exists a
  martingale solution $(\mathcal{S}, (\bfU^\eps_S, \theta^\eps_S))$ of
  \eqref{eq:B:eqn:vel:mod}-\eqref{eq:B:theta:eps} which is stationary
  in time, meaning that the law of the solution is independent of
  time.  These stationary solutions
  $(\mathcal{S}, (\bfU^\eps_S, \theta^\eps_S))$ may be chosen in such
  a way that, for any $p \geq 2$ there is an
  $\eta = \eta(p, \Ra, \tilde{\Ra}) > 0$, for which
 \begin{align}
    \sup_{1 \geq \eps > 0} \int_{\HH} 
       \exp(\eta ( \| \bfU \|^2 +  \|\theta\|^2_{L^p})) 
   d\mu_\eps(\bfU, \theta) 
   =C_{0} < \infty ,
    \label{eq:exp:yomoment:cond}
  \end{align}
  where
  $\mu_\eps(\cdot) = \Prb( (\bfU^\eps_S(t), \Trs^\eps_S(t)) \in
  \cdot)$ for any fixed $t\geq 0$.
\item[(iii)] Now consider the case when $\eps = 0$.  Fix a stochastic
  basis $\mathcal{S}$ and any $\mathcal{F}_0$-measurable random
  variable $\theta_0 \in L^2(\Omega, H)$.  Then there exists a unique
  process $\theta^0$ with
\begin{align}
   \theta^0 \in L^2(\Omega; L^2_{loc}([0,\infty); V) \cap C([0,\infty); H)),
   \label{eq:esp:zero:regularity}
\end{align}
which is $\mathcal{F}_{t}$-adapted, weakly solves
\eqref{eq:vel:zero}--\eqref{eq:theta:zero}, and satisfies the initial
condition $\theta^0(0) = \theta_0$.
\item[(iv)] For a given stochastic basis $\mathcal{S}$ and each
  $\theta_0 \in H$ denote $\theta^0(\cdot, \theta_0, W)$ as the unique
  corresponding stochastic process satisfying
  \eqref{eq:vel:zero}--\eqref{eq:theta:zero} with
  \eqref{eq:esp:zero:regularity}.  We have that
  $\theta_0 \mapsto \theta^0(t, \theta_0, W)$ is Fr\'echet
  differentiable in $\theta_0 \in H$ for any $t \geq 0$ and any fixed
  realization $W(\cdot) = W(\cdot, \omega)$.  On the other hand
  $W \mapsto \theta^0(t, \theta_0, W)$ is Fr\'echet differentiable in
  $W$ from $C_0([0,t], \RR^N)$ to $H$ for each fixed $\theta_0 \in H$
  and $t > 0$.
\end{itemize}
\end{Prop}
These results are standard for a systems like
\eqref{eq:B:eqn:vel:mod}--\eqref{eq:bc:homo} and
\eqref{eq:vel:zero}--\eqref{eq:theta:zero}; see
e.g. \cite{ZabczykDaPrato1992,FlandoliGatarek1,
  DebusscheGlattHoltzTemam1, GlattHoltzHerzogMattingly2017}.  The only
novelty in view of existing methods is the uniform moment bound
\eqref{eq:exp:yomoment:cond}.  The existence of such a collection of
solutions is established using the maximum principle and exponential
moment bounds in the companion work
\cite{FoldesGlattHoltzRichardsWhitehead2015};
cf. Appendix~\ref{sec:mom:est:drift:diff:eqn} below.

The Markovian framework for \eqref{eq:vel:zero}--\eqref{eq:theta:zero} is defined as follows.  The transition functions are given by
\begin{align}
   P_t^0(\Trs_0, A) := \Prb( \Tho(t, \Trs_0) \in A), \quad t \geq 0, \; \Trs_0 \in H, A \in \mathcal{B}(H),
   \label{eq:trans:function}
\end{align}
where $\mathcal{B}(H)$ denotes the Borel sets of $H$, and the associated semigroup is given by
\begin{align} \label{eq:def:smg}
   P_t^0\phi(\Trs_0) :=  \E \phi(\Tho(t, \Trs_0)), \quad t \geq 0, \; \phi \in M_b(H),
\end{align}
where $M_b(H)$ is the set of bounded measurable functions on $H$.
In view of the continuous dependence on initial conditions the semigroup $\{P_t^0\}_{t \geq 0}$ is Feller, that is, it maps the set of
continuous bounded functions on $H$, $C_b(H)$, to itself.
This semigroup acts on Borelian probability measures $\mu$ according to
\begin{align}
\label{eq:mark:meas}
  \mu P_t^0(A)  = \int_H P_t^0(\Trs, A) d\mu(\Trs), \quad A \in \mathcal{B}(H).
\end{align}
A measure $\mu \in Pr(H)$ is said to be invariant with respect to $\{P_t^0\}_{t \geq 0}$ if
$\mu P_{t}^{0} = \mu$ for all $t\geq 0$.  Recall that in three space dimensions the Markovian framework for the full system with $\eps>0$ cannot be implemented due to a lack of global well-posedness. 

As an immediate consequence of bounds in Appendix~\ref{sec:mom:est:drift:diff:eqn}
and the Krylov-Bogolyubov averaging technique we have
\begin{Lem}\label{lem:ex:IM}
  Under the assumptions of Proposition~\ref{prop:existence:sol:BEs} and for any $\Ra, \tilde{\Ra}$
  there exists an invariant measure $\mu_0$ of the Markov semigroup $P_t^0$.  Moreover for any
  such measure
  \begin{align}
    \int_{H}\exp(\eta \| \theta \|^2_{L^p}) d \mu_0(\theta) \leq C_0 < \infty,
    \label{eq:est:IM:infpr}
  \end{align}
  for any $p \geq 2$ and any suitably small $\eta = \eta(p, \Ra, \tilde{\Ra})$.
\end{Lem}

\subsection{Wasserstein Distance, Weighted Metrics and Associated Observables}
\label{sec:Wash:met:contraction}

We next recall the general setting of the
Kantorovich-Wasserstein distance in which we establish our convergence
results.  We then introduce weighted metrics on $H$ and $\HH$
along with some associated classes of observable which are used to measure
distances between measures in the analysis below.  

Let $(\mathbb{X}, \rho)$ be a complete metric space
and take $\Pr_1(\mathbb{X}, \rho)$ to be the set of Borel probability measures $\mu$
on $\mathbb{X}$ with $\int \rho_{\eta} (0, \theta) d\mu(\theta) < \infty$.  On
$\Pr_1(\mathbb{X}, \rho)$ we define the Kantorovich-Wasserstein metric,
relative to $\rho$, equivalently as\footnote{Here note slight abuse of notation wherein
we denote both the underlying metric and its Wasserstein by $\rho$;  the meaning 
of $\rho$ will be clear from context in what follows.}
\begin{align}
   \rho (\mu, \tilde{\mu})
    :=  \sup_{\|\phi\|_{Lip, \rho} \leq 1} 
           \left| \int_\mathbb{X} \phi(\theta) d\mu(\theta) 
                  -  \int_\mathbb{X} \phi(\theta) d\tilde{\mu}(\theta)  \right|
    = \inf_{\Gamma \in \mathcal{C}(\mu, \tilde{\mu})}  
        \int_{\mathbb{X} \times \mathbb{X}}  \rho (\theta, \tilde{\theta}) d \Gamma(\theta, \tilde{\theta}),
    \label{eq:wash:dist:gen}
\end{align}
where
\begin{align}
\|\phi\|_{Lip, \rho} :=  \sup_{\substack{\theta \not =  \tilde{\theta}}} 
    \frac{|\phi(\theta) - \phi( \tilde{\theta}) | }{\rho(\theta, \tilde{\theta} )}
  \label{eq:lip:semi:nm}
\end{align}
for $\phi : \mathbb{X} \to \RR$, and $\mathcal{C}(\mu, \tilde{\mu})$
is the collection of Borel probability measures $\Gamma$ in
$Pr(\mathbb{X} \times \mathbb{X})$ with $\mu, \tilde{\mu}$ as its
marginals.  Hence, the last term in \eqref{eq:wash:dist:gen} is
equivalent to
\begin{align}\label{eq:eq:def:wd}
   \rho (\mu, \tilde{\mu}) = \inf \E \rho (X, Y) \,,
\end{align}
where the infimum is taken over all $\mathbb{X}$-valued random variables $X, Y$ distributed 
as $\mu, \tilde{\mu}$ respectively.   
See e.g. \cite{Villani2008, Dudley2002} for
further background on these metrics.

Specializing to our current setting, the following metrics on $H$ and $\HH$ 
prove useful for measuring the distance between the laws of solutions
of (\ref{eq:B:eqn:vel:mod})--(\ref{eq:B:theta:eps}) and 
(\ref{eq:vel:zero})--(\ref{eq:theta:zero}).
Following e.g. \cite{HairerMattingly2008} we introduce, for $\eta > 0$, the 
weighted metric on $H$ as
\begin{align}
\label{eq:metric:def}
   \rho_\eta (\theta, \tilde{\theta})
   = \inf_{\substack{\gamma \in C^{1}([0,1]; H)\\ \gamma(0) = \theta, \gamma(1) =\tilde{\theta} }}
   \int_0^1 \exp(\eta \|\gamma\|^2) \| \gamma'(s) \| ds, 
\end{align}
for any $\theta, \tilde{\theta} \in H$.
Notice that
\begin{align}
  \| \theta - \tilde{\theta}\| \leq \rho_\eta (\theta, \tilde{\theta})  \leq
  \exp(2\eta (\|\theta\|^2 + \|\tilde{\theta}\|^2))  \| \theta - \tilde{\theta}\|\,,
 \label{eq:norm:comp:1}
\end{align}
for $\theta, \tilde{\theta} \in H$.  For the extended phase space $\HH$,
similarly to our recent work \cite{FoldesFriedlanderGlattHoltzRichards2016},
we take
\begin{align}
  \tilde{\rho}_{\eta}((\mathbf{u},\theta),(\tilde{\mathbf{u}},\tilde{\theta})) 
  =  \|\mathbf{u}-\tilde{\mathbf{u}}\|_{H^1} + \rho_{\eta}(\theta,\tilde{\theta}),
  \label{eq:rho:ext:PS}
\end{align}
again defined for any $\eta > 0$.

For the statement of the main results we consider the following class of `observables'
\begin{align*}
    \mathcal{V}(\HH) = \mathcal{V}_{\eta}(\HH) 
  := \left\{ \phi \in C^1(\HH):     [\phi]_{\eta} < \infty \right\}\,,
\end{align*}
where the semi-norm $[ \cdot ]_{\eta}$ is given by
\begin{align*}
  [\phi]_{\eta} :=  
  &\sup_{(u,\theta) \in \HH} 
    \left[
   \sup_{\zeta\in \HH,\|\zeta \| =1} | \nabla _{u}\phi( u,\theta) \cdot\zeta | 
    +  \exp(-\eta\|\theta\|)\sup_{\xi\in H,\|\xi \| =1} | \nabla _{\theta}\phi( u,\theta) \cdot\xi |
    \right].
\end{align*}
Note that, as in \cite[Proposition 4.1]{HairerMattingly2008},
\begin{align}
  \|\phi \|_{Lip, \tilde{\rho}_\eta} \leq C [\phi]_\eta,
  \label{eq:norm:par:equiv}
\end{align}
for any $\phi \in C^1(\HH)$ with the constant $C$ independent of $\phi$.

\subsection{Statement of the Main Results}
\label{sec:main:res}

We now  precisely formulate the main results of this work 
on the convergence of solutions when $\Pr \to \infty$.  We begin
with the following finite time convergence result:
\begin{Thm}
\label{prop:conv:vel}
For each $\eps\in (0,1)$, let $(\Uep, \Thep)$ with its associated
stochastic basis $\mathcal{S}$ be a Martingale solution of
\eqref{eq:B:eqn:vel:mod}--\eqref{eq:B:theta:eps} in the sense of
Proposition~\ref{prop:existence:sol:BEs}.  Relative to this
$\mathcal{S}$, let $\Tho$ be a solution
of \eqref{eq:vel:zero}--\eqref{eq:theta:zero}.  Suppose there exists
$C_0, \eta> 0$ such that\footnote{Although we can relax the assumption
  on the initial velocity field to $q$th moment bounds for some
  $q \geq 4$, we have opted to impose an exponential moment condition
  for simplicity of presentation.}
\begin{align}
  \sup_{\eps > 0} \E \bigl[ \exp(\eta(\| \Uep(0) \|^2 
         +  \| \Thep(0) \|^2_{L^3}+ 
  \| \Tho(0) \|^2_{L^3})) \bigr] \leq C_0 < \infty,
  \label{eq:fin:tm:MB}
\end{align}
and suppose that $(\Uep, \Thep)$ maintains (\ref{eq:exp:yomoment:cond}).
Then, for each $t > 0$, there exists  $\gamma_0 >0$, $C> 0$ such that
\begin{align}
   \E&\left( \sup_{s \in [0,t]} \|  \Thep(s) - \Tho(s) \|^{p} 
        + \int_0^t  \|\Uep(s) - M(\Tho)(s)\|^{2}_{H^1}   ds \right)
       \label{eq:fin:time:conv}
  \\&\qquad \qquad \qquad \qquad \qquad
  \leq C \left(\eps^{\gamma} +
   \left(\E \| \Thep(0) - \Tho(0)\|^2 
        + \eps \E\|\Uep(0) - M(\Tho)(0)  \|^2 \right)^{\gamma} \right)
        , \notag
\end{align}
for each $0< \gamma \leq \gamma_0$ and any $p>\gamma$.  Here the constants
$C = C(p,\eta, \ra,\rab, C_0, \|\sigma\|_{L^3}, t)$ and
$\gamma_0 = \gamma_0(\eta, \ra,\rab, C_0, \|\sigma\|_{L^3}, t)$ are
independent of $\eps > 0$ and depends on the initial conditions only
through $C_0$.
\end{Thm}
\noindent The proof of Theorem~\ref{prop:conv:vel} is established in
Section~\ref{sec:f:t:conv:proof}.  

\begin{Rmk}
  It is worth noting that since $(\Uep, \Thep)$ are only Martingale
  solutions the associated stochastic bases $\mathcal{S}$ are not
  unique and could in fact vary as a function of $\eps$; that is, we
  cannot assume that these solutions are all defined relative to the
  \emph{same} stochastic basis.  Similar remarks apply to the bound
  \eqref{eq:ext:obs} below.  However, crucially, in both
  \eqref{eq:fin:time:conv} \eqref{eq:ext:obs} the constants do not
  depend on the choice of bases.  Thus, since this subtlety does not
  cause any trouble in what follows, we shall henceforth
  suppress this technical point in order to avoid notational confusion.
\end{Rmk}

We next state our results regarding the convergence of
statistically invariant states to the unique invariant measure of the
formal limit system (cf. Theorem~\ref{thm:main:result:intro}).
\begin{Thm}
   \label{Thm:Main:Thm:Precise}
   Let $\{P_t^{0}\}_{t \geq 0}$ be the Markov semigroup associated to
   \eqref{eq:vel:zero}--\eqref{eq:theta:zero} defined in
   \eqref{eq:def:smg}.  There exists $N_0 > 0$ and
   $\eta_0 > 0$ depending only on $\ra$ and $\rab$ such that
   if $N \geq N_0$, where $N_0$ is the number of stochastically forced modes in
   (\ref{eq:vel:zero})--(\ref{eq:theta:zero}), then the following bounds
   hold:
  \begin{itemize}
  \item[(i)] 
   For some $\gamma, C > 0$ depending only on $\ra$ and $\rab$
  \begin{align}
    \rho_{\eta}( \mu P_t^0 , \tilde{\mu}P_t^0) 
    \leq C \exp(- \gamma t)  \rho_{\eta}( \mu, \tilde{\mu}),
    \label{eq:conv:est}
  \end{align}
  for any $\mu, \tilde{\mu} \in \Pr_1(H, \rho_\eta)$,
  $\eta \in (0, \eta_0)$ and every $t \geq 0$, where $\rho_{\eta}$ is
  defined in \eqref{eq:wash:dist:gen}.  In particular, there exists a
  unique ergodic invariant measure $\mu_0 \in Pr_1(H, \rho_\eta)$ of
  \eqref{eq:vel:zero}--\eqref{eq:theta:zero}.
\item[(ii)] Suppose that $\{\mu_{\eps}\}_{\eps >0}$ is any collection
  of measures corresponding to stationary martingale solutions of
  \eqref{eq:B:eqn:vel:mod}--\eqref{eq:bc:homo} and satisfying the uniform
  bound \eqref{eq:exp:yomoment:cond} for any $\eta \in(0,\eta_0)$ and
  some $p \geq 3$. Let $\mu_0$ be the unique invariant measure of
  \eqref{eq:vel:zero}--\eqref{eq:theta:zero}.  Then, there exists
  $\tilde{q} = \tilde{q}(\ra,\rab)$,
  $\tilde{C} = \tilde{C}(\ra,\rab)$, independent of $\eps > 0$, such
  that
  \begin{align}
    \tilde{\rho}_{\eta}( \mu_{\eps}, L \mu_0)  \leq \tilde{C} \eps^{\tilde{q}}
    \label{eq:ext:conv}
  \end{align}
  for every $\eps > 0$. Consequently for the stationary processes 
  $(\mathbf{u}^{\eps}_{S},\theta^{\eps}_{S})$ and $\theta^{0}_{S}$, distributed
  as $\mu_{\eps}$ and $\mu_0$, respectively,
  \begin{align}
  |\E(\phi(\mathbf{u}^{\eps}_{S},\theta^{\eps}_{S}) - \phi(L\theta^{0}_{S}))| 
    \leq \tilde{C}[ \phi ]_{\eta}\eps^{\tilde{q}}
  \label{eq:ext:obs}
  \end{align}
  for any $\phi\in \mathcal{V}(\HH)$.
  \end{itemize}
\end{Thm}
\noindent The proof of (i) is carried out in
Section~\ref{sec:mixing:inf:prandlt} with some technical details
relegated to Appendices~\ref{sec:mom:est:drift:diff:eqn} and
\ref{sec:grad:est:markov}.  In Section~\ref{sec:reduction} we describe
a general strategy which shows that, under the conditions of
Theorem~\ref{Thm:Main:Thm:Precise}, \eqref{eq:ext:conv} follows from
\eqref{eq:conv:est} and \eqref{eq:fin:time:conv}.

We conclude this section by making several important remarks.
\begin{Rmk}
\label{rmk:main:theorem}
\mbox{}
\begin{itemize}
\item[(i)] Assertions of Theorem \ref{Thm:Main:Thm:Precise} also hold
  in two space dimensions and in addition one can show that
  \eqref{eq:B:eqn:vel:mod}--\eqref{eq:bc:homo} has a well defined
  Markov semigroup. Thus, any statistically invariant state
  corresponds to an invariant measure of the associated semigroup.
  This allows us to show in
  \cite{FoldesGlattHoltzRichardsWhitehead2015} that the
  $\eps$-independent exponential moment bounds in
  \eqref{eq:exp:yomoment:cond} hold for all invariant measures.
\item[(ii)] In 3D, the existence of a sequence of statistically
  invariant states of \eqref{eq:B:eqn:vel:mod}--\eqref{eq:bc:homo}
  satisfying the uniform moment bound \eqref{eq:exp:yomoment:cond} is
  established in the companion work
  \cite{FoldesGlattHoltzRichardsWhitehead2015}.  However we have not
  been able to show that \emph{every} sequence of invariant states
  have such (uniform) exponential moments.
\item[(iii)] In Section~\ref{sec:finite:time:conv} we also derive a
  'corrector' system which well approximates the dynamics of the
  velocity field of the full system
  (\ref{eq:B:eqn:vel:mod})--(\ref{eq:B:theta:eps}) up to the initial
  time for large values of $\Pr$ or equivalently small $\eps> 0$.  See
  Theorem~\ref{lem:cor:to:LargePrandlt:og} below for further details.
  Note however that this more refined version of
  \eqref{eq:fin:time:conv} is not needed in order to achieve
  (\ref{eq:ext:conv}), (\ref{eq:ext:obs}).
  \end{itemize}
\end{Rmk}

\section{Reduction to Finite Time Dynamics}
\label{sec:reduction}

In this section we describe a general strategy for reducing the
convergence of measures to finite time asymptotics and uniform moment
bounds when the formal limit system satisfies a suitable mixing
condition as in \eqref{eq:conv:est}.  To fix ideas we assume the
conditions of Theorem~\ref{Thm:Main:Thm:Precise} throughout this
section.  Also, we assume that both (\ref{eq:conv:est}),
(\ref{eq:fin:time:conv}) hold; we establish these bounds rigorously
below in Sections~\ref{sec:mixing:inf:prandlt},
\ref{sec:finite:time:conv} respectively.  The reader should note that
the presented method is flexible and can be applied in a variety of
settings.  See, for example, \cite{HairerMattingly2008,
  HairerMajda2010,KuksinShirikian12,
  FoldesFriedlanderGlattHoltzRichards2016}.

We adapt some ideas from our recent work 
\cite[Section 5]{FoldesFriedlanderGlattHoltzRichards2016} to the present 
setting.  For $\eta > 0$ take 
\begin{align*}
  \rho^*_\eta( \theta, \tilde{\theta})  
  = \tilde{\rho}_\eta(L(\theta), L(\tilde{\theta})) \,,
\end{align*}
where $\tilde{\rho}_\eta$ is defined in (\ref{eq:rho:ext:PS}) and
recall that $L$ is the lifting operator given in
(\ref{eq:lifted:map}).  It is not hard to show that the metrics
$\rho_\eta$ and $\rho_{\eta}^*$ are equivalent (see
\cite{FoldesFriedlanderGlattHoltzRichards2016} for details), and
consequently the associated Wasserstein metrics on $H$ are also
equivalent.  Then from (\ref{eq:conv:est}) we obtain the following
result, see \cite[Corollary
5.4]{FoldesFriedlanderGlattHoltzRichards2016} and surrounding
commentary for further details.

\begin{Lem}
\label{lem:lift}
  Under the same conditions as Theorem~\ref{Thm:Main:Thm:Precise} (i), we have
  \begin{align}
    \tilde{\rho}_\eta(L(\mu P_t^{0}), L(\tilde{\mu} P_t^{0}))
     \leq  C e^{-\gamma t}   \tilde{\rho}_\eta(L(\mu), L(\tilde{\mu}))
    \label{eq:lifted:cont}
  \end{align}
  for any $\mu, \tilde{\mu} \in \mbox{Pr}_1(H,\rho_\eta)$, 
  and every $t \geq 0$.  
 \end{Lem}

Using \eqref{eq:lifted:cont} choose $t^* > 0$  to guarantee 
that   
\begin{align}
  \tilde{\rho}_\eta(L(\mu P_{t^*}^0), L(\tilde{\mu} P_{t^*}^0)) 
  \leq \frac{1}{2} \tilde{\rho}_\eta(L(\mu), L(\tilde{\mu})).
  \label{eq:cont:t:cond}
\end{align}
By the invariance of $\mu_0$
\begin{align*}
   \tilde{\rho}_\eta( \tilde{\mu}, L \mu_0) 
   &\leq
      \tilde{\rho}_\eta( \tilde{\mu}, L( (\Pi \tilde{\mu})P_{t + t^*}^0))
       + \tilde{\rho}_\eta( L( (\Pi \tilde{\mu})P_{t + t^*}^0), L (\mu_0 P_{t + t^*}^0))\\
   &\leq
      \tilde{\rho}_\eta( \tilde{\mu}, L( (\Pi \tilde{\mu})P_{t + t^*}^0))
       +\frac{1}{2} \bigl[
          \tilde{\rho}_\eta( L( (\Pi \tilde{\mu})P_{t}^0),  \tilde{\mu}) + 
          \tilde{\rho}_\eta( \tilde{\mu}, L \mu_0 )
         \bigr]
\end{align*}
for any $t \geq 0$ and any other measure $\tilde{\mu} \in \Pr(\HH)$.
Here recall that $\Pi$ is the projection operator defined in (\ref{eq:def:proj}).
Rearranging, taking a time average we obtain
\begin{align}
  \tilde{\rho}_\eta( \tilde{\mu}, L \mu_0) 
  &\leq  \frac{2}{t^*}\int_0^{t^*} \bigl[
    \tilde{\rho}_\eta( \tilde{\mu}, L( (\Pi \tilde{\mu})P_{t + t^*}^0))
    +\tilde{\rho}_\eta( \tilde{\mu}, L( (\Pi \tilde{\mu})P_{t}^0)) 
    \bigr] dt 
  \notag\\
  &=  \frac{2}{t^*}\int_0^{2t^*}  
    \tilde{\rho}_\eta( \tilde{\mu}, L( (\Pi \tilde{\mu})P_{t}^0))
    dt \,. 
    \label{eq:ta:trck}
 \end{align}

 With \eqref{eq:ta:trck} now in hand, we consider an sequence of
 stationary Martingale solutions
 $\{(\bfU^\eps_S, \theta^\eps_S)\}_{\eps > 0}$ and take
 $\{\mu_\eps \}_{\eps > 0} \subset \Pr(\HH)$ to be the corresponding
 collection of stationary measures.  We suppose that
 $\{\mu_\eps \}_{\eps > 0}$ satisfies the uniform moment condition
 \eqref{eq:exp:yomoment:cond} as in
 Proposition~\ref{prop:existence:sol:BEs}, (ii).  We also denote
 $\theta_S^{0, \eps}$ (and $M(\theta_S^{0, \eps})$) the solution of
 \eqref{eq:vel:zero}, \eqref{eq:theta:zero} with the initial condition
 $ \theta^\eps(0)$ so that, for every $t > 0$, the law of
 $\theta_S^{0, \eps}(t)$ is $ (\Pi \mu_\eps)P_{t}^0$.  Consequently,
 \eqref{eq:eq:def:wd}, \eqref{eq:rho:ext:PS} yield
\begin{align}
\tilde{\rho}_\eta( \mu_\eps, L( (\Pi \mu_\eps)P_{t}^0)) 
  \leq \E \| \bfU^\eps_S(t) - M(\theta^{0,\eps}_S(t)) \|_{H^1} 
  + \E \rho_\eta ( \theta_S^\eps(t), \theta_S^{0, \eps}(t)),
  \label{eq:ex:ps:est:1}
\end{align}
where recall $M$ is defined as in (\ref{eq:inf:pr:conts}).
By \eqref{eq:norm:comp:1} one has, for any $q > 0$,
\begin{align*}
\E \rho_\eta ( \theta_S^\eps(t), \theta_S^{0, \eps}(t)) &\leq
\E \left( \exp( 2\eta (\| \theta_S^\eps(t) \|^2 + \| \theta_S^{0, \eps}(t) \|^2)) 
   \| \theta_S^\eps(t) - \theta_S^{0, \eps}(t) )\|\right)
  \notag\\
 &\leq C \E \left( \exp( 3\eta (\| \theta_S^\eps(t) \|^2 + \| \theta_S^{0, \eps}(t) \|^2)) 
   \| \theta_S^\eps(t) - \theta_S^{0, \eps}(t) \|^{q/2}\right) 
  \notag\\
 &\leq C \! \left(\E \exp( 12\eta \| \theta_S^\eps(t) \|^2) \cdot
 	\E \exp(12 \eta \| \theta_S^{0, \eps}(t) \|^2)) \right)^{1/4} \! \!
	\left(\E \| \theta_S^\eps(t) - \theta_S^{0, \eps}(t)  \|^q \right)^{1/2} 
   \! \! \! \! \! \! \,.
\end{align*}
Using \eqref{eq:gen:drift:diff:bnd:2} with $p = 2$ we obtain
\begin{align}
\E \rho_\eta ( \theta_S^\eps(t), \theta_S^{0, \eps}(t)) 
  \leq C \! \left(\E \exp( 96 \eta \| \theta_S^\eps(0)\|^2) \right)^{1/2} \! 
	\left(\E \| \theta_S^\eps(t) - \theta_S^{0, \eps}(t)  \|^q \right)^{1/2} 
   \! \! \! \! \! \! \,.
  \label{eq:ex:ps:est:2}
\end{align}
Finally combining \eqref{eq:ta:trck} with \eqref{eq:ex:ps:est:1},
\eqref{eq:ex:ps:est:2} we obtain
\begin{align}
\tilde{\rho}_\eta( \mu_\eps, L \mu_0) 
  \leq&  C \E\! \int_0^{2t*} \! \! \! \| \bfU^\eps_S(t) - M(\theta^{0,\eps}_S(t)) \|_{H^1} dt 
  \notag\\
      &+ 
  C \left( \E \exp( 96\eta \| \theta^\eps_S(0) \|^2)\right)^{1/2} \! \!
  \sup_{t \in [0,2t^*]}\left( \E \| \theta^\eps_S(t) -\theta^{0,\eps}_S(t)\|^q \right)^{1/2} \,,
  \label{eq:redux:bnd}
\end{align}
which holds for any $t^* > 0$ such that (\ref{eq:cont:t:cond}) holds.

With \eqref{eq:redux:bnd} established we conclude this section by
detailing the proof of Theorem~\ref{Thm:Main:Thm:Precise}, (ii) up to
the supporting results proven in Sections~\ref{sec:mixing:inf:prandlt}
and \ref{sec:finite:time:conv}.
\begin{proof}[Proof of Theorem~\ref{Thm:Main:Thm:Precise}, (ii)]
  The inequality \eqref{eq:conv:est} implies \eqref{eq:lifted:cont}
  which in turn implies the bound \eqref{eq:redux:bnd}.  Applying
  \eqref{eq:fin:time:conv} with
  $(\bfU^\eps, \theta^\eps) = (\bfU^\eps_S, \theta^\eps_S)$ and
  $\theta^0 = \theta^{0, \eps}_S$, noting
  $\theta^{0, \eps}_S (0) = \theta^\eps(0)$, and recalling the assumed
  bound \eqref{eq:exp:yomoment:cond} we infer \eqref{eq:ext:conv}.  To
  prove \eqref{eq:ext:obs}, let $C$ be as in \eqref{eq:norm:par:equiv}.
  Since the Lipschitz norm, with metric $\tilde{\rho}_\eta$
  of $\psi := \phi/C[\phi]_\eta$ is at most one, then, by
  \eqref{eq:wash:dist:gen}
  \begin{equation}
    |\E(\psi(\mathbf{u}^{\eps}_{S},\theta^{\eps}_{S}) - \psi(L\theta^{0}_{S}))|  = 
    \left| \int_{\HH} \psi(\bfU, \theta) \mu_\eps(\bfU, \theta) 
        - \int_{H} \psi( \bfU, \theta) L(\mu_0)(\theta, \bfU) \right| 
      \leq \tilde{\rho}_\eta(\mu_\eps, L\mu_0)\,,
  \end{equation}
  and the result follows from 
  \eqref{eq:ext:conv}.   The proof is complete.
\end{proof}

\section{Contraction in the Wasserstein Distance for the Infinite 
Prandtl System}
\label{sec:mixing:inf:prandlt}

In this section we establish some properties of the infinite Prandtl
system \eqref{eq:vel:zero}--\eqref{eq:theta:zero}, which provide a
sufficient condition for proving Theorem \ref{Thm:Main:Thm:Precise}
(i) as a consequence of a general result in \cite[Theorem
3.4]{HairerMattingly2008}. These properties are summarized as follows:
\begin{Prop}
\label{prop:conditions:for:gaps}
There exist $\eta_0 > 0$ and $N_0$, depending only on $ \Ra, \rab$,
such that for any $0 <\eta < \eta_0$, whenever the number of forced
modes $N$ exceeds $N_0$, we have
\begin{itemize}
\item[(a)] {\bf Lyapunov structure:} For all $t^{*} > 0$, there exists
  $C_1=C_1(t^{*},\eta)$ such that for each $\Tho_0 \in H$ and every
  $t \in [0, t^{*}]$,
\begin{align}
  \E \left(\exp( \eta \| \Tho(t, \Tho_0) \|^2) (1 + \|\JJ_{0, t}\|)\right) 
  \leq C_1 \exp( \eta (1+4\ra\rab)e^{-t/2} \|\Tho_0\|^2)\,,
  \label{eq:decay:exp:norm}
\end{align}
where the operator $\JJ_{0,t}$ is the Fr\'echet derivative of
$\theta^0(t, \theta_0)$ with respect to initial condition $\Tho_0$;
see \eqref{eq:J:op} and \eqref{eq:grad:system:0} below.
\item[(b)] { \bf Gradient Bound for Markov semigroup:}
for any $\phi \in C^{1}_b(H)$, and every $t \geq 0$, $\Trs \in H$
\begin{align}
   \|\nabla P_t^0 \phi(\Trs)\| \leq C \exp(\eta \| \Trs\|^2) 
  \left( \sqrt{ P_t^0 (|\phi(\Trs)|^2)} +  \delta(t) \sqrt{ P_t^0 (\|\nabla\phi(\Trs)\|^2)}\right),
   \label{eq:ASF:type:bnd}
\end{align}
where $\delta(t) \to 0$ as $t \to \infty$.  Here again
$\delta: [0,\infty) \to [0, \infty)$ and $C > 0$ depend only on
$\Ra, \rab$, and $\eta$.
\item[(c)] {\bf Irreducibility condition:}  for any $M  > 0$, $\epsilon > 0$ there is aF
  $t_{*} = t_{*}(M, \epsilon, \eta)$ such that for each $t \geq t_\ast$
\begin{align}
   \inf_{\| \Trs_{0}\|, \|\tilde{\Trs}_{0}\| \leq M}  \,
   \sup_{\Gamma \in \mathcal{C}(\delta_{\Trs_{0}} P_{t}^0, \delta_{\tilde{\Trs}_{0}}P_{t}^0)}
   \Gamma \{ (\Trs, \tilde{\Trs}) 
  \in H\times H: \rho_{\eta}(\Trs, \tilde{\Trs}) < \epsilon \} > 0,
   \label{eq:irreducibility:cond}
\end{align}
where, as above in \eqref{eq:wash:dist:gen},
$\mathcal{C}(\delta_{\Trs_{0}} P_{t}^0, \delta_{\tilde{\Trs}_{0}}
P_{t}^0)$
denotes the collection of all couplings of the measures
$\delta_{\Trs_{0}} P_{t}^0$ and $\delta_{\tilde{\Trs}_{0}} P_{t}^0$.
\end{itemize}
\end{Prop}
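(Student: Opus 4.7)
The plan is to verify the three conditions (a), (b), (c) in turn, leveraging the fact that $\bfU^{0} = L\Tho$ is two derivatives smoother than $\Tho$ via the Stokes system.

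For (a) I would apply It\=o's formula to $\|\Tho(t)\|^{2}$ using \eqref{eq:theta:zero}. The nonlinear transport term vanishes by $\nabla \cdot \Uo = 0$ and the homogeneous Dirichlet condition on $\Tho$; the forcing contributes $|\sigma|^{2}\,dt$, and the boundary-heating term $\rab(L\Tho)_{d}$ is bounded by $C\Ra\rab\|\Tho\|^{2}$ via smoothing of $L$ and Poincar\'e. Absorbing a small fraction of the dissipation and invoking exponential martingale estimates in the spirit of Lemma~\ref{eq:unifrom:mom:bound:simple:sys} then produces $\E\exp(\eta\|\Tho(t)\|^{2})\leq C\exp(\eta(1+4\Ra\rab)e^{-t/2}\|\Tho_{0}\|^{2})$ for $\eta$ small enough depending on $\Ra,\rab$. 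For the Jacobian factor I would derive the variational equation for $\Xi=\JJ_{0,t}\xi$, namely $\partial_{t}\Xi+\Uo\cdot\nabla\Xi-\Delta\Xi=\rab(L\Xi)_{d}-(L\Xi)\cdot\nabla\Tho$, from which Gronwall gives $\|\Xi(t)\|\leq\|\xi\|\exp\bigl(C\int_{0}^{t}(1+\|\nabla\Tho\|^{2})\,ds\bigr)$. Splitting the expectation $\E[\exp(\eta\|\Tho(t)\|^{2})\|\JJ_{0,t}\|]$ by Cauchy--Schwarz and using the integrated gradient control available from the analogue of \eqref{eq:exp:sm:2} on this setting (already recorded in \cite{FoldesGlattHoltzRichardsThomann2014a}) yields \eqref{eq:decay:exp:norm}.

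For (b) I would follow the Hairer--Mattingly asymptotic strong Feller strategy. Writing $\nabla P_{t}^{0}\phi(\Trs)\cdot\xi=\E[\nabla\phi(\Tho(t))\cdot\JJ_{0,t}\xi]$, the goal is to construct an adapted control $v\in L^{2}(\Omega\times[0,t];\RR^{N})$ supported in the span of the forced directions $\sigma_{1},\ldots,\sigma_{N}$ such that the residual $\rho_{t}:=\JJ_{0,t}\xi-\mathcal{A}_{0,t}v$ satisfies $\E\|\rho_{t}\|^{2}\leq\delta(t)^{2}\|\xi\|^{2}$ with $\delta(t)\to 0$. Here $\mathcal{A}_{0,t}$ denotes the Malliavin derivative associated with \eqref{eq:theta:zero}. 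The construction is the standard Foias--Prodi feedback control applied to the linearized temperature equation: for $N\geq N_{0}(\Ra,\rab)$ sufficiently large, damping the low-mode component of $\Xi$ is enough to drive the high-mode remainder to zero exponentially in the mean-square sense. Integration by parts in the Malliavin sense then gives
\begin{align*}
\nabla P_{t}^{0}\phi(\Trs)\cdot\xi=\E[\nabla\phi(\Tho(t))\cdot\rho_{t}]+\E\Bigl[\phi(\Tho(t))\int_{0}^{t}v\,dW\Bigr],
\end{align*}
and Cauchy--Schwarz, together with the control $\E\int_{0}^{t}|v|^{2}\,ds\leq C\exp(\eta\|\Trs\|^{2})$ (which is where (a) is used to absorb the Lyapunov cost of the control), produces the bound \eqref{eq:ASF:type:bnd}.

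For (c), which I expect to be the main obstacle, the issue is that \eqref{eq:vel:zero}--\eqref{eq:theta:zero} with noise switched off can have highly non-trivial dynamics, so one cannot merely invoke a deterministic drift towards zero. Instead, following the outline in the introduction, I would couple the two solutions through a common Brownian motion and apply a Girsanov shift to show that both $\Tho(\cdot,\Trs_{0})$ and $\Tho(\cdot,\tilde{\Trs}_{0})$ enter a small ball around zero at time $t_{*}$ with positive probability under the original measure. Concretely, introduce a control $g(s)$ supported in $\mathrm{span}\{\sigma_{1},\ldots,\sigma_{N}\}$ chosen so that, under the shifted noise $dW+g\,ds$, the low-mode projections of both solutions are damped to zero by time $t_{*}/2$; a Foias--Prodi argument applied to the high-mode remainder (again for $N\geq N_{0}$) then contracts that remainder exponentially on $[t_{*}/2,t_{*}]$ to arbitrary size $\delta/2$. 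Moment bounds for $\Tho$ from (a) combined with localisation by the stopping times $\tau_{K}=\inf\{s:\|\Tho(s)\|+\|\tilde{\Tho}(s)\|\geq K\}$ keep $\E\exp(\frac{1}{2}\int_{0}^{t_{*}\wedge\tau_{K}}|g|^{2}\,ds)$ finite, so the Novikov condition holds on the truncated event; Girsanov then transfers the positive probability of the favourable event back to the original measure. The coupling $\Gamma\in\mathcal{C}(\delta_{\Trs_{0}}P_{t_{*}}^{0},\delta_{\tilde{\Trs}_{0}}P_{t_{*}}^{0})$ is the joint law of $(\Tho(t_{*},\Trs_{0}),\Tho(t_{*},\tilde{\Trs}_{0}))$ restricted to this event, and \eqref{eq:norm:comp:1} translates the $L^{2}$-smallness of the endpoints into $\rho_{\eta}(\Tho,\tilde{\Tho})<\delta$, yielding \eqref{eq:irreducibility:cond}. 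The delicate coordination between the Foias--Prodi damping scale, the stopping-time truncations for Novikov, and the exponential moment bounds is what makes this step substantially more technical than either (a) or (b).
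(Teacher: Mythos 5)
Your treatments of (a) and (b) track the paper's. Part (a) follows from the exponential moment bound of Proposition~\ref{prop:inf:prnd:bound} together with the Jacobian estimate \eqref{eq:J:bound}; the paper combines these through the \emph{joint} exponential moment on $\|\Tho\|^2$ and $\int_0^t\|\nabla\Tho\|^2\,ds$ rather than splitting by Cauchy--Schwarz, which keeps the constant $\eta$ in the exponent of \eqref{eq:decay:exp:norm} unchanged, but this is cosmetic. Part (b) is exactly the Malliavin/Foias--Prodi control argument of Appendix~\ref{sec:grad:est:markov}, with the feedback control $w=\sigma^{-1}\lambda P_N\bar{\rho}$ driving the residual $\JJ_{0,t}\xi-\AAA_{0,t}w$ to zero once $N$ is large.

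The gap is in (c). You propose to drive \emph{both} solutions $\Tho(\cdot,\Trs_0)$ and $\Tho(\cdot,\tilde{\Trs}_0)$ into a small ball by applying a \emph{single} Girsanov shift $g$ to their \emph{common} Brownian motion, with $g$ ``chosen so that the low-mode projections of both solutions are damped to zero.'' No such $g$ exists in general: the damping needed is the feedback $-\lambda_N\sigma^{-1}P_N\theta$, which takes different values along the two trajectories, so one additive shift cannot implement it for both; nor does an open-loop control that steers $P_N\Tho(\cdot,\Trs_0)$ to zero have any reason to steer $P_N\Tho(\cdot,\tilde{\Trs}_0)$ anywhere useful, since the unforced dynamics of \eqref{eq:vel:zero}--\eqref{eq:theta:zero} are nontrivial. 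The paper sidesteps this entirely by observing that the supremum over couplings in \eqref{eq:irreducibility:cond} may be evaluated at the \emph{product} coupling $\tilde{\Gamma}(A\times B)=P_t^0(\Trs_0,A)\,P_t^0(\tilde{\Trs}_0,B)$, which together with \eqref{eq:norm:comp:1} reduces (c) to the one-point statement \eqref{eq:irrd:desired}, namely $\inf_{\|\Trs_0\|\le M}\Prb(\|\Trs^0(t,\Trs_0)\|<\delta)>0$; the feedback-damped auxiliary system, the small-noise event, the cutoff $\chi_K$ guaranteeing Novikov, and the stopping times $\tau_K$ are then applied to one solution at a time, and the two individual positive probabilities are simply multiplied. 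Your remaining ingredients are essentially the right ones for that one-point statement (though you should also quantify that the Girsanov density does not degenerate, i.e.\ $\sup_{\|\Trs_0\|\le M}\Prb(\mathcal{M}_{\Trs_0,K}(t)<\smc)\to0$ as $\smc\to0$, as in \eqref{eq:Gir:shf:bnd}), but without the product-coupling reduction the argument as written does not close.
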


Proving the first item, (a), essentially reduces to establishing a
moment bound which follows from estimates found in
\cite{FoldesGlattHoltzRichardsWhitehead2015}, and which we recall
below in Appendix \ref{sec:mom:est:drift:diff:eqn} (see Proposition
\ref{prop:inf:prnd:bound}).  The second condition,
\eqref{eq:ASF:type:bnd}, can be translated to a control problem
through the use of Malliavin calculus which in our setting amounts to
proving a relatively straightforward Foias-Prodi type estimate.  Once
again (b) can be established by methods essentially contained in
previous works and we relegate further details to
Appendix~\ref{sec:grad:est:markov}.  As already mentioned above, the
principal novel challenge here is to prove the irreducibility
property (c) which we turn to next.

\subsection{Irreducibility}
\label{sec:Irred}

In previous related works the proof of irreducibility essentially
relies on the fact that the governing equations without the stochastic
forcing have a trivial attractor which is stable under small force
perturbations; see e.g.  \cite{EMattingly2001,HairerMattingly06,
  ConstantinGlattHoltzVicol2013, FoldesGlattHoltzRichardsThomann2013}.
In our present situation, \eqref{eq:vel:zero}--\eqref{eq:theta:zero},
the dynamics without body forces can be highly
non-trivial.\footnote{Note that the geometric control methods recently
  developed in \cite{GlattHoltzHerzogMattingly2017} by the second
  coauthor would be difficult to apply, as these methods seemingly require
  a detailed understanding of the wave-number interactions in
  (\ref{eq:vel:zero})--(\ref{eq:theta:zero}).}  Here our approach
reduces \eqref{eq:irreducibility:cond} to Foias-Prodi
type estimates through the Girsanov theorem and careful stopping time
arguments.  We believe that our strategy may be applicable to other
problems.

As a preliminary step we show that \eqref{eq:irreducibility:cond} follows from a simpler bound.

\begin{Lem}
For a given $N \geq 0$ consider \eqref{eq:vel:zero}--\eqref{eq:theta:zero} with $N$ independently forced directions.
If for every $M, \epsilon > 0$ there is a $t_\ast = t_\ast(M, \epsilon) > 0$ such that
\begin{align}
 \inf_{\|\Trs_{0}\| \leq M} \Prb ( \|\Trs^0(t, \Trs_{0})\| < \epsilon)  > 0, \textrm{ for each $t \geq t_\ast$}\,,
  \label{eq:irrd:desired}
\end{align}
then \eqref{eq:irreducibility:cond} holds for such an $N$ and any $\eta > 0$.
\end{Lem}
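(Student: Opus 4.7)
The plan is to deduce \eqref{eq:irreducibility:cond} from \eqref{eq:irrd:desired} by exhibiting a single, elementary admissible coupling, namely the \emph{product} (independent) coupling. The right-hand inequality in \eqref{eq:norm:comp:1} will first convert the abstract condition $\rho_\eta(\theta, \tilde\theta) < \delta$ into a concrete $L^2$-smallness condition on each component: choose $\delta' = \delta'(\delta, \eta) > 0$ so small that whenever $\|\theta\|, \|\tilde\theta\| < \delta'$ one has
\begin{align*}
\rho_\eta(\theta, \tilde\theta) \leq \exp\bigl(2\eta(\|\theta\|^2 + \|\tilde\theta\|^2)\bigr)\,\|\theta - \tilde\theta\| \leq 2\delta' \exp\bigl(4\eta (\delta')^2\bigr) < \delta.
\end{align*}
Thus any event on which both solutions simultaneously land in a ball of radius $\delta'$ is contained in $\{\rho_\eta < \delta\}$.

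Now, given $M, \delta > 0$, invoke \eqref{eq:irrd:desired} with $\delta'$ in place of $\delta$ to obtain $t_\ast = t_\ast(M, \delta') > 0$ such that
\begin{align*}
p_\ast := \inf_{\|\theta_0\| \leq M} \Prb\bigl(\|\theta^0(t, \theta_0)\| < \delta'\bigr) > 0, \qquad \textrm{for every } t \geq t_\ast.
\end{align*}
To build the required coupling, fix $\theta_0, \tilde\theta_0$ with $\|\theta_0\|, \|\tilde\theta_0\| \leq M$, enlarge the probability space if necessary so as to carry two independent $N$-dimensional Brownian motions $W^{(1)}, W^{(2)}$, and let $\Gamma$ be the joint law of $(\theta^0(t, \theta_0, W^{(1)}), \theta^0(t, \tilde\theta_0, W^{(2)}))$. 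By the well-posedness in Proposition~\ref{prop:existence:sol:BEs}(iii), the marginals of $\Gamma$ are exactly $\delta_{\theta_0} P_t^0$ and $\delta_{\tilde\theta_0} P_t^0$, so $\Gamma \in \mathcal{C}(\delta_{\theta_0} P_t^0, \delta_{\tilde\theta_0} P_t^0)$.

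Independence of $W^{(1)}$ and $W^{(2)}$, combined with the previous two steps, then gives
\begin{align*}
\Gamma\bigl\{(\theta, \tilde\theta) : \rho_\eta(\theta,\tilde\theta) < \delta\bigr\}
\geq \Prb\bigl(\|\theta^0(t,\theta_0, W^{(1)})\| < \delta'\bigr)\cdot \Prb\bigl(\|\theta^0(t, \tilde\theta_0, W^{(2)})\| < \delta'\bigr) \geq p_\ast^2,
\end{align*}
and taking the supremum over $\Gamma$ and the infimum over $\|\theta_0\|, \|\tilde\theta_0\| \leq M$ yields \eqref{eq:irreducibility:cond}. The argument is genuinely a bookkeeping step that separates the Wasserstein/coupling formulation from the dynamical content of irreducibility; there is no real obstacle within this lemma itself. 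The substantive difficulty---which will dominate the rest of Section~\ref{sec:Irred}---is pushed entirely into \eqref{eq:irrd:desired}, where a Girsanov shift combined with Foias--Prodi estimates and stopping-time arguments must be used to produce with positive probability a nearly trivial trajectory of \eqref{eq:vel:zero}--\eqref{eq:theta:zero} starting from an arbitrary bounded datum.
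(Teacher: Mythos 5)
Your proposal is correct and follows essentially the same route as the paper: both use the product (independent) coupling, the right-hand inequality in \eqref{eq:norm:comp:1} to convert $\rho_\eta$-closeness into simultaneous $L^2$-smallness, and then bound the coupling probability below by the square of the single-marginal probability from \eqref{eq:irrd:desired}. Your realization of the product coupling via two independent Brownian motions is just a concrete construction of the same measure the paper defines directly on cylinder sets.
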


\begin{proof}
For any $\Trs_{0}, \tilde{\Trs}_{0} \in H$ consider the element $\tilde{\Gamma} 
\in \mathcal{C}(\delta_{\Trs_{0}} P_{t}^0, \delta_{\tilde{\Trs}_{0}}P_{t}^0)$ defined 
on cylindrical sets as
\begin{align*}
\tilde{\Gamma} (A \times B) = P_t(\Trs_0, A) \times P_t(\tilde{\Trs}_0, B),\quad A, B \in \mathcal{B}(H).
\end{align*}
For each $t > 0$  and any $M, \eta, \gamma > 0$ one has
\begin{align*}
     \inf_{ \|\Trs_0\|, \|\tilde{\Trs}_0\|  \leq M} \;&
     \sup_{\Gamma \in \mathcal{C}( \delta_{\Trs_0} P_t, \delta_{\tilde{\Trs}_0} P_t)}
     \Gamma\{ (\Trs, \tilde{\Trs}) \in H\times H: \rho_{\eta}(\Trs, \tilde{\Trs}) < \gamma \}  \notag\\
     &\geq     \inf_{ \|\Trs_0\|, \|\tilde{\Trs}_0\|  \leq M} \;
       \tilde{\Gamma} \left\{ (\Trs, \tilde{\Trs}) \in B_1\times B_1: \| \Trs\| 
       + \|\tilde{\Trs}\| < \gamma \exp(-4\eta)  \right\}  \\
      &\geq
    \left(\inf_{ \|\Trs_0\|  \leq M}  P_t
        \left(\Trs_0,  \Big\{ \Trs \in H:  \|\Trs\|  
               < \min \{\gamma/2 \cdot \exp(-4\eta), 1\} \Big\} \right) \right)^2 \\
    &= \left( \inf_{\|\Trs_{0}\| \leq M} \Prb ( \|\Trs(t, \Trs_{0})\| < \min \{\gamma/2 \cdot \exp(-4\eta), 1\})\right)^2,
\end{align*}
where we have used \eqref{eq:norm:comp:1} in the first inequality.
Applying \eqref{eq:irrd:desired} with
$\epsilon = \min \{\gamma/2 \cdot \exp(-4\eta), 1\})$ and the given
$M >0$ yields the desired result.
\end{proof}

In order to establish \eqref{eq:irreducibility:cond} the rest of the
section is therefore devoted to 
\begin{Prop}
\label{prop:irred:simp}
There exists $N_0 = N_0(\ra, \rab)$ sufficiently
large (cf. \eqref{eq:N:cond:irr}) such that, for any $N \geq N_0$  and every
$M, \epsilon > 0$, there is a $t_\ast = t_\ast(M, \epsilon) > 0$
such that \eqref{eq:irrd:desired} is satisfied.
\end{Prop}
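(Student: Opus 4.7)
The plan is to realize irreducibility toward zero via a Girsanov shift that damps the finite-dimensional ``noisy'' subspace, combined with a Foias--Prodi argument that forces the unforced (high) modes to decay because the Laplacian dominates on their range. Let $P_N$ denote the orthogonal projection in $H_2$ onto $\mathrm{span}(\sigma_1, \ldots, \sigma_N)$ and set $Q_N = I - P_N$. Since the map $\sigma: \RR^N \to \mathrm{Ran}(P_N)$ sending $(g_k)$ to $\sum_k g_k \sigma_k$ is a bijection, for any fixed $\alpha > 0$ the adapted $\RR^N$-valued process
\begin{equation*}
g := \sigma^{-1} \bigl( P_N(\Uo \cdot \nabla \Tho) - \rab P_N u^0_d - \Delta P_N \Tho - \alpha P_N \Tho \bigr)
\end{equation*}
is well defined. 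Introducing $\mathcal{E}_t = \exp(\int_0^t g \, dW - \tfrac{1}{2}\int_0^t |g|^2 ds)$ and $d\tilde{\Prb} = \mathcal{E}_t d\Prb$, Girsanov's theorem converts the low-mode part of \eqref{eq:theta:zero} into the stable OU equation $dP_N \Tho = -\alpha P_N \Tho \, dt + \sigma d\tilde{W}$, while (because $Q_N \sigma = 0$) the high modes satisfy the pathwise ODE $\tfrac{d}{dt} Q_N \Tho = -Q_N(\Uo \cdot \nabla \Tho) + \rab Q_N u^0_d + \Delta Q_N \Tho$.

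Testing this last equation with $Q_N \Tho$ and using the divergence-free cancellation $\langle \Uo \cdot \nabla Q_N \Tho, Q_N \Tho \rangle = 0$, the spectral inequality $\|\nabla Q_N \Tho\|^2 \geq \lambda_{N+1} \|Q_N \Tho\|^2$, and Stokes regularity $\|\Uo\|_{H^2} \leq C \ra \|\Tho\|$, Young's inequality produces
\begin{equation*}
\tfrac{d}{dt} \|Q_N \Tho\|^2 \leq -\lambda_{N+1} \|Q_N \Tho\|^2 + C(\ra, \rab, N) \|P_N \Tho\|_{H^1}^2 (1 + \|\Tho\|^2).
\end{equation*}
Under $\tilde{\Prb}$, $P_N \Tho$ is an $N$-dimensional OU process converging exponentially to a Gaussian stationary law whose variance is arbitrarily small by taking $\alpha$ large; so $\|P_N \Tho\|_{H^1}$ has controlled moments uniformly in time. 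Taking $N \geq N_0(\ra, \rab)$ so that $\lambda_{N+1}$ dominates, a Gr\"onwall argument together with standard concentration for the OU process yields $t_\ast = t_\ast(M, \delta)$ and $p_1 > 0$ such that
\begin{equation*}
\tilde{\Prb}\bigl(\|\Tho(t_\ast)\| < \delta\bigr) \geq p_1 \quad \textrm{uniformly in } \|\Tho_0\| \leq M.
\end{equation*}

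To transfer back to $\Prb$ we verify a truncated Novikov condition: elliptic regularity for Stokes gives $|g(t)|^2 \leq C(\ra, \rab, N)(1 + \|\Tho(t)\|^4)$, so introducing the stopping time $\tau_R := \inf\{t : \int_0^t (1 + \|\Tho\|^4) ds \geq R\}$ we obtain $\E \mathcal{E}_{t \wedge \tau_R}^2 \leq C_R$, while the uniform exponential moment bounds from the companion work \cite{FoldesGlattHoltzRichardsThomann2014a} (recalled in Appendix~\ref{sec:mom:est:drift:diff:eqn}) make $\Prb(\tau_R > t_\ast)$ arbitrarily close to $1$ for $R$ large, uniformly in $\|\Tho_0\| \leq M$. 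Setting $A := \{\|\Tho(t_\ast)\| < \delta,\ \tau_R > t_\ast\}$ and applying Cauchy--Schwarz to $\Prb(A) = \tilde{\E}[\mathcal{E}_{t_\ast}^{-1} \mathbf{1}_A]$ we conclude
\begin{equation*}
\Prb(A) \geq \frac{\tilde{\Prb}(A)^2}{\E \mathcal{E}_{t_\ast \wedge \tau_R}^2} > 0,
\end{equation*}
which is \eqref{eq:irrd:desired}. The main obstacle is this final balance: because $|g|^2$ grows like $\|\Tho\|^4$, Novikov's criterion is only borderline and one must simultaneously calibrate the truncation threshold $R$, the damping coefficient $\alpha$, the Foias--Prodi time $t_\ast$, and the spectral cutoff $N$ while maintaining uniformity in $\|\Tho_0\| \leq M$; this is exactly where the sharp exponential moments from \cite{FoldesGlattHoltzRichardsThomann2014a} are indispensable.
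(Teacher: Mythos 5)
Your proposal is correct in outline but follows a genuinely different route from the paper's. You remove the \emph{entire} low-mode drift by Girsanov, turning $P_N\Tho$ into an Ornstein--Uhlenbeck process, and then run a Foias--Prodi argument on $Q_N\Tho$; the paper instead leaves the nonlinearity intact and only \emph{adds} a damping term $-\lambda_N P_N\bar{\Trs}$ (see \eqref{eq:vel:zero:dip:shift}--\eqref{eq:theta:zero:dip:shift}), proving pathwise smallness of the \emph{full} damped system via the substitution $\psi = \bar{\Trs} - \sigma W$ on the positive-probability event $\mathcal{X}_{\gamma,t,N}$ that the Wiener path and its derivatives stay small. The payoff of the paper's choice is that the Girsanov shift is linear in the state and, once the cutoff $\chi_K(\|P_N\tilde{\theta}_K\|)$ is inserted, \emph{bounded}, so Novikov is trivial and $\mathcal{M}_{\Trs_0,K}$ is a genuine martingale; the price of yours is exactly the borderline Novikov problem you identify, since $|g|^2\sim\|\Tho\|^4$ while only quadratic exponential moments are available. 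Your truncated-Novikov/Cauchy--Schwarz patch does close this and is structurally parallel to the paper's pair of estimates \eqref{eq:Gir:shf:bnd}--\eqref{eq:stop:tm:K:bnd}. Two points need tightening. First, in your high-mode inequality the factor $(1+\|\Tho\|^2)$ contains $\|Q_N\Tho\|^2$, so $\lambda_{N+1}$ cannot dominate uniformly; the Gr\"onwall decay of $Q_N\Tho$ is effective only on the positive-probability event where $\sup_{t\le t_*}\|P_N\Tho\|_{H^1}$ is small after the $O(\alpha^{-1}\log M)$ transient, and this event should be made explicit as the analogue of $\mathcal{X}_{\gamma,t,N}$. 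Second, for the final Cauchy--Schwarz you need $\tilde{\Prb}(A)>0$, hence control of $\tau_R$ under the \emph{shifted} measure rather than under $\Prb$ as written; this is easily repaired by observing that on the good event the shifted trajectory is bounded by a constant depending only on $M$ and $t_*$, so $\tau_R>t_*$ holds automatically once $R$ exceeds that constant.
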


\begin{proof}[Proof of Proposition~\ref{prop:irred:simp}]
  We first establish the analogue of \eqref{eq:irrd:desired} for the
  modified system
\begin{align}
 &-\Delta \bar{\bfU} = \nabla \bar{p} + \ra \hatk \bar{\Trs} \,, \quad \nabla \cdot \bar{\bfU} = 0,
  \label{eq:vel:zero:dip:shift}\\
  &d \bar{\Trs} + \bar{\bfU} \cdot \nabla \bar{\Trs} dt
  = (\rab \cdot \bar{u}_d + \Delta \bar{\Trs} -
  \lambda_N P_N \bar{\Trs}) dt + \sum_{k =1}^{N} \sigma_k d W^k,
  \quad \bar{\Trs}(0) = \Trs_{0} \,,
    \label{eq:theta:zero:dip:shift}
\end{align}
 when $N$ is sufficiently large.\footnote{Here
recall that $P_N$ denotes the projection onto the first $N$ modes of $-\Delta$ (with
boundary conditions as in \eqref{eq:bc:homo}) and $\lambda_N$ is the corresponding largest
eigenvalue in this collection.}  As in (\ref{eq:B:eqn:vel:mod})--(\ref{eq:B:theta:eps}) 
we supplement \eqref{eq:vel:zero:dip:shift}--\eqref{eq:theta:zero:dip:shift}
with the homogeneous boundary conditions (\ref{eq:bc:homo}).
Denote $\psi := \bar{\Trs} - \sum_{k =1}^{N} \sigma_k W^k = \bar{\Trs} - \sigma W$ which
satisfies
\begin{align*}
  &\partial_{t} \psi + \bar{\bfU} \cdot \nabla \psi
  = \rab \cdot \bar{u}_d + \Delta \psi - \lambda_N P_N \psi
  + (\Delta \sigma W - \lambda_N P_N \sigma W - \bar{\bfU} \cdot \nabla (\sigma W)), \qquad \psi(0) = \Trs_0.
\end{align*}
Taking an inner product with $\psi$, using that $\bar{\bfU}$ is divergence free, the inverse Poincar\'e
inequality (see \eqref{eq:lbd:rbr} below) and the bound
\begin{equation}\label{eq:nub:est}
\|\nabla \bar{\bfU} \| \leq \ra \| \bar{\Trs} \| \leq \ra (\| \psi \| + \| \sigma W\| )
\end{equation}
which follows from \eqref{eq:vel:zero:dip:shift} we have
\begin{align*}
    \frac{1}{2}\frac{d}{dt} \| \psi \|^{2}  + \lambda_N \| \psi\|^2
   &\leq  ( \rab \| \bar{\bfU}\| +  \|\Delta \sigma W\| + \lambda_N \|\sigma W\| 
     + \| \bar{\bfU}\| \| \nabla \sigma W\|_{L^{\infty}} )\|\psi\| \\
   &\leq C( \rab \ra (\| \psi \| + \| \sigma W\| ) +  \|\Delta \sigma W\| 
     + \lambda_N \|\sigma W\| + \ra (\| \psi \| + \| \sigma W\| ) \| \nabla \sigma W\|_{L^{\infty}} )\|\psi\|.
\end{align*}
For any $t > 0$, let $\xi_t = \sup_{0 \leq s \leq t}\{ \|\psi(s)\| = 0\}$ with the convection that
the supremum of the empty set is zero.  Thus, for any $t > 0$,
on the interval $[\xi_t, t]$ it follows that
\begin{align}
    \frac{d}{dt} \| \psi \| + (\lambda_N - \ra( \rab + \| \nabla \sigma W\|_{L^{\infty}}))\| \psi \|
   \leq&  (\ra \rab + \lambda_N + \ra\| \nabla \sigma W\|_{L^{\infty}} )\|\sigma W\| + \|\Delta \sigma W\|.
   \label{eq:good:bound:shift:shift}
\end{align}
Next, we use the fact that, with positive probability,
each of $\| \sigma W\|, \|\nabla \sigma W\|, \|\Delta \sigma W\|$ stays
close to zero over finite time intervals.
For $\gamma > 0$, $t >0$, $N > 0$ consider the sets
\begin{align*}
\mathcal{X}_{\gamma, t,N} := \left\{\sup_{s \in [0,t]} \| \nabla \sigma W\|_{L^{\infty}} \leq 1,
	\sup_{s \in [0,t]}  \|\Delta \sigma W\| \leq \frac{\gamma}{2},
	\sup_{s \in [0,t]}\|\sigma  W\| \leq \gamma\left( \frac{1}{2(\ra \rab + \lambda_N + \ra )}  \wedge  1\right) \right\}.
\end{align*}
Since $\sigma$ is spatially smooth we infer from standard properties of Brownian motion that
$\Prb( \mathcal{X}_{\gamma, t, N}) > 0$ for any $\gamma > 0$, $t > 0$, and $N> 0$.  On the other hand,
on $\mathcal{X}_{\gamma, t, N}$ the differential inequality
\begin{align*}
    \frac{d}{dt} \| \psi \| + (\lambda_N - \ra( \rab + 1))\| \psi \|
   \leq  \gamma
\end{align*}
holds over the interval $[\xi_t,t]$.   

Hence, fixing $N_0$ sufficiently large, we have for any $N\geq N_0$, 
\begin{align}
	\lambda_N  \geq \max\{2 \ra( \rab + 1) ,1\},
\label{eq:N:cond:irr}
\end{align}
and we infer that on $\mathcal{X}_{\gamma, t, N}$,
\begin{align*}
  \| \bar{\theta}(t, \theta_0) \| \leq \| \psi(t) \| + \|\sigma W\|
  \leq 2 \gamma+  e^{-\lambda_N t/2} \| \theta_0\|,
\end{align*}
where note that $\| \psi(t) \| = 0$ on the set where $\xi_t > 0$.
Therefore, for a given $M > 0$, $\epsilon > 0$, by choosing
$\gamma = \epsilon/4$ and $t_\ast = t_\ast(M, \epsilon)$ such that
$e^{- \lambda_N t_\ast} M \leq \frac{\epsilon}{2}$, we have for any
$t \geq t_\ast$
\begin{align}
  \inf_{\| \theta_0\| \leq M} \Prb(\| \bar{\theta}(t, \theta_0) \| < \epsilon) 
  \geq \Prb( \mathcal{X}_{\epsilon/4, t,N}) > 0 \,.
  \label{eq:irrd:desired:shf}
\end{align}

In order to now infer \eqref{eq:irrd:desired} from
\eqref{eq:irrd:desired:shf} we apply the Girsanov theorem and make
further bounds to a slightly modified version of
\eqref{eq:vel:zero:dip:shift}--\eqref{eq:theta:zero:dip:shift}.  For
$K > 0$ and $\theta_0 \in H$ define
$\tilde{\theta}_K = \tilde{\theta}_K(\cdot, \theta_0)$ as the solution
of \eqref{eq:vel:zero:dip:shift}--\eqref{eq:theta:zero:dip:shift} with
the term $-\lambda_N P_N \bar{\theta}$ replaced with
$-\lambda_N P_N \tilde{\theta}_K \chi_K( \| P_N \tilde{\theta}_K \|)$.
Here $\chi_K$ is a smooth, non-negative cut-off function with
$\chi_K \equiv 1$ for $|x| \leq K$ and $\chi_K \equiv 0$ for
$|x| \geq K +1$.  Consider the stopping times
\begin{align*}
  \tau_K(\theta_0)  = \inf_{s \geq 0} \left\{  \| P_N \tilde{\theta}_K(s, \theta_0) \| 
         \geq K \right\},
\end{align*}
for any $K > 0$ and any $\theta_0 \in H$.  It is not hard to see that
for any $K > 0$ and any $\theta_0 \in H$
\begin{align}
   \Prb \left( \bar{\theta}(t \wedge \tau_K(\theta_0), \theta_0) 
  =  \tilde{\theta}_K(t \wedge \tau_K(\theta_0), \theta_0), 
  \textrm{ for every } t \geq 0\right) =1.
   \label{eq:pathwise:unique:arg}
\end{align}
On the other hand, for any $\Trs_0 \in H$ and $K > 0$, the law of
$\tilde{\theta}_K(\cdot, \theta_0)$ is absolutely continuous with
respect to the law of the processes $\theta^0(\cdot, \theta_0)$
solving \eqref{eq:vel:zero}--\eqref{eq:theta:zero}.  Indeed, for
$\Trs_0 \in H$ and $K > 0$ define
\begin{align}
 \mathcal{M}_{\Trs_0,K}(t) = \exp\left( -\int_0^t \alpha_{\Trs_0,K} dW 
      - \frac{1}{2}\int_0^t |  \alpha_{\Trs_0,K} |^2 ds \right),
 \label{eq:gir:shift:def:trs:K}
\end{align}
where
\begin{align*}
\alpha_{\Trs_0,K}(s) = - \lambda_N \sigma^{-1} P_N \tilde{\Trs}_K(s,\Trs_0) 
  \chi_K( \| P_N \tilde{\theta}_K (s,\Trs_0) \|)
\end{align*}
and take
\begin{align*}
  d \mathbb{Q}_{\Trs_0,K,t} :=   \mathcal{M}_{\Trs_0, K}(t) d \Prb \,.
\end{align*}
Notice that, since
$| \sigma^{-1}P_N \tilde{\Trs}_K(s,\Trs_0) \chi_K( \| P_N
\tilde{\theta}_K \|) | \leq \|\sigma^{-1}\| \cdot (K+1)$,
the Novikov condition is satisfied and for any $\epsilon > 0$,
$t \geq 0$, $K > 0$, and $\theta \in H$, the Girsanov theorem yields
\begin{align*}
   \Prb ( \|\Trs(t, \Trs_{0})\| < \epsilon) 
      = \mathbb{Q}_{\Trs_0, K,t}(\|\tilde{\Trs}_{K} (t, \Trs_{0})\| < \epsilon)
      = \E \left( \mathcal{M}_{\Trs_0,K}(t)  
           \indFn{\|\tilde{\Trs}_{K} (t, \Trs_{0})\| < \epsilon} \right).
\end{align*}
Hence, for any $\epsilon>0$, $\theta_0 \in H$, and for any $\smc, K, t > 0$, the
Markov inequality implies
\begin{align*}
    \Prb ( \|\Trs(t, \Trs_{0})\| < \epsilon)    
  \geq \smc \Prb \left( \|\tilde{\Trs}_K(t, \Trs_{0})\| < \epsilon,   
                  \mathcal{M}_{\Trs_0,K}(t) \geq \smc \right)
        \geq \smc \Prb \left( \|\bar{\Trs}(t, \Trs_{0})\| < \epsilon,   
      \mathcal{M}_{\Trs_0,K}(t) \geq \smc, \tau_K(\theta_0) > t \right),
\end{align*}
where we used \eqref{eq:pathwise:unique:arg} for the final inequality.
On the other hand
\begin{align*}
\Prb \left( \|\bar{\Trs}(t, \Trs_{0})\| < \epsilon  \right)
  &\leq
  \Prb \left( \|\bar{\Trs}(t, \Trs_{0})\| < \epsilon, \mathcal{M}_{\Trs_0, K}(t) \geq \smc \right)
  +\Prb \left(  \mathcal{M}_{\Trs_0, K}(t) < \smc \right)\\
    &\leq
  \Prb \left( \|\bar{\Trs}(t, \Trs_{0})\| < \epsilon, \mathcal{M}_{\Trs_0,K}(t) 
      \geq \smc,  \tau_K(\theta_0) > t\right)
  +\Prb \left(  \mathcal{M}_{\Trs_0, K}(t) < \smc \right) +  \Prb(\tau_K(\theta_0) < t) \,.
\end{align*}
These two bounds yield
\begin{align}
\frac{1}{\smc}\inf_{\|\Trs_{0}\| \leq M} &\Prb ( \|\Trs(t, \Trs_{0})\| < \epsilon)    \notag\\
&\geq  \inf_{\|\Trs_{0}\| \leq M} \Prb \left( \|\bar{\Trs}(t, \Trs_{0})\| < \epsilon  \right) -
\sup_{\|\Trs_{0}\| \leq M} \biggl(\Prb \left(  \mathcal{M}_{\Trs_0,K}(t) < \smc \right) 
  +  \Prb(\tau_K(\Trs_0) < t) \biggr),
\label{eq:suff:cond:mom:irr:bnd}
\end{align}
for any $M, \epsilon, t> 0$ and for any $K, \smc > 0$.

Since the first term on the the right-hand side of
\eqref{eq:suff:cond:mom:irr:bnd} is independent of $K>0$ and
$\smc >0$, we finish the argument by showing that for every fixed
$M, K, t > 0$
\begin{align}
  \sup_{\|\Trs_{0}\| \leq M} \Prb \left(  \mathcal{M}_{\Trs_0, K}(t) < \smc \right) \to 0, 
  \quad \textrm{ as } \smc \to 0,
  \label{eq:Gir:shf:bnd}
\end{align}
and for every given $M, t > 0$
\begin{align}
\sup_{\|\Trs_{0}\| \leq M} \Prb(\tau_K(\Trs_0) < t)  \to 0, \quad \textrm{ as } K \to \infty.
\label{eq:stop:tm:K:bnd}
\end{align}
For the first bound \eqref{eq:Gir:shf:bnd}, we have from
\eqref{eq:gir:shift:def:trs:K} and It\=o isometry
\begin{align*}
   \Prb \left(  \mathcal{M}_{\Trs_0,K}(t) < \smc \right)
   &= \Prb \left(   \int_0^t \alpha_{\Trs_0,K} dW 
     + \frac{1}{2}\int_0^t | \alpha_{\Trs_0,K} |^2ds > \log(\smc^{-1}) \right) \notag\\
   &\leq \frac{1}{\log(\smc^{-1})} \E \left(\left| \int_0^t \alpha_{\Trs_0,K} dW \right| 
     + \frac{1}{2}\int_0^t | \alpha_{\Trs_0,K}|^2 ds \right)
   \notag\\
   &\leq \frac{2 }{\log(\smc^{-1})} \E \left( 1 
     + \lambda_N^2 \|\sigma^{-1} P_N\|^2 \int_{0}^{t} \|P_{N} \tilde{\Trs}(t,\Trs_0)\|^{2} 
     \chi_K ( \|P_N\tilde{\Trs}_K(t,\Trs_0)\|)ds\right),
   \notag\\
   &\leq \frac{2 \left( 1 + \lambda_N^2 \|\sigma^{-1} P_N\|^2 (K+1)^2 t\right)}{\log(\smc^{-1})} ,
\end{align*}
valid for any $\smc \in (0,1)$, $K >0$, and any $\Trs_0 \in H$.  For
the second bound, \eqref{eq:stop:tm:K:bnd} observe that, in view of
\eqref{eq:pathwise:unique:arg},
\begin{align}
 \Prb(\tau_K(\Trs_0) < t) \leq \Prb\left( \sup_{s \in [0,t]} \| P_N \bar{\Trs}(s, \Trs_0)\| \geq K \right)
 \leq \frac{1}{K^2} \E \left( \sup_{s \in [0,t]} \| \bar{\Trs}(s, \Trs_0)\|^2 \right)\,.
 \label{eq:stop:tm:K:bnd:arg:1}
\end{align}
From the It\={o} formula, and \eqref{cond:mode}, it follows that
\begin{align*}
   d \|  \bar{\Trs} \|^{2} +  2\lambda_N \|P_{N} \bar{\Trs}\|^{2}dt + 2\| \nabla \bar{\Trs}\|^{2} dt
   = \left(2 \rab \langle \tilde{u}_{d}, \bar{\Trs} \rangle +1\right)dt  + \langle\sigma, \bar{\Trs} \rangle dW.
\end{align*}
Integrating in time and using \eqref{eq:vel:zero:dip:shift}, inverse
Poincar\'e inequality (see \eqref{eq:lbd:rbr}), and \eqref{eq:nub:est} we infer
for any $s \geq 0$
\begin{align*}
   \| \bar{\Trs}(s)\|^2 +2 \lambda_N \int_{0}^{s} \|\bar{\Trs}\|^{2}dr \leq \| \Trs_{0}\|^{2} 
  + 2 \ra \rab \int_{0}^{s} \|\bar{\Trs}\|^{2}dr  +  s
    + 2\sup_{r \in [0,s]} \left| \int_0^r \langle\sigma, \bar{\Trs} \rangle dW  \right|\,.
\end{align*}
Using the assumption \eqref{eq:N:cond:irr} and the Birkholder-Davis-Gundy inequality we infer
\begin{align}
  \E \left( \sup_{s \in [0,t]} \| \bar{\Trs}(s, \Trs_0)\|^2 \right) \leq  \| \Trs_{0}\|^{2}+ 17 t.
  \label{eq:stop:tm:K:bnd:arg:2}
\end{align}
Combining \eqref{eq:stop:tm:K:bnd:arg:1} and
\eqref{eq:stop:tm:K:bnd:arg:2} thus yields the second bound
\eqref{eq:stop:tm:K:bnd}.

Using \eqref{eq:irrd:desired:shf}, \eqref{eq:suff:cond:mom:irr:bnd},
\eqref{eq:Gir:shf:bnd}, and \eqref{eq:stop:tm:K:bnd}, we conclude the
proof as follows.  Given any $\epsilon > 0$ and any $M >0$, and with
$\lambda_N$ given as in \eqref{eq:N:cond:irr}, choose $t_\ast$ as in
\eqref{eq:irrd:desired:shf}, that is, such that
$e^{- \lambda_N t_\ast} M \leq \frac{\epsilon}{2}$.  Fix any
$t \geq t_*$ and by \eqref{eq:irrd:desired:shf} we have
$a = a(M, \epsilon, t) := \inf_{\| \Trs_0\| \leq M} \Prb(
\|\bar{\Trs}(t, \Trs_0)\| < \epsilon) > 0$.
Now, by \eqref{eq:stop:tm:K:bnd}, we can pick $K$ sufficiently large so
that $\sup_{\|\Trs_{0}\| \leq M} \Prb(\tau_K(\Trs_0) < t) \leq a/4$,
and with $K, M, t$ fixed, we can by \eqref{eq:Gir:shf:bnd} choose
$\smc >0$ small enough so that
$\sup_{\|\Trs_{0}\| \leq M} \Prb \left( \mathcal{M}_{K,\Trs_0}(t) <
  \smc \right) \leq a/4$.
Finally, by combining these choices with
\eqref{eq:suff:cond:mom:irr:bnd} we obtain that
\begin{align*}
  \inf_{\| \Trs_0\| \leq M} \Prb( \|\Trs(t, \Trs_0) \| < \epsilon) \geq \frac{\smc a}{2} > 0 \,.
\end{align*}
The proof of Proposition~\ref{prop:irred:simp} is thus complete.
\end{proof}

\section{Finite Time Asymptotics}
\label{sec:finite:time:conv}

In this section we prove Theorem~\ref{prop:conv:vel}.  We also derive
a 'corrector' system which we show approximates the velocity component of
the full system \eqref{eq:B:eqn:vel:mod}--\eqref{eq:B:theta:eps} up to
the initial time (see Theorem~\ref{lem:cor:to:LargePrandlt:og} below).

\subsection{Preliminaries: The Stokes Operator}
\label{sec:stokes}
Before proceeding further we recall (see e.g. 
\cite{ConstantinFoias88, Temam2001}) 
some properties of solutions of the Stokes equation
\begin{align}
 - \Delta \bfU = \nabla p + \mathbf{f}, \quad \nabla \cdot \bfU = 0,
 \label{eq:stokes:steady}
\end{align}
supplemented with the mixed periodic-Dirichlet boundary conditions as in
\eqref{eq:bc:homo}.  We can express \eqref{eq:stokes:steady}
more abstractly as $A \bfU = P\mathbf{f}$, where $A = -P\Delta$ is the
Stokes operator.   Here, $P$ is the Leray
projection on divergence free vector fields $P:(L^2(\DD))^3 \to H_1$
with $H_1$, the space of $L^2$ divergence free vector fields,
defined in Section~\ref{sec:funct:setting}.\footnote{
Equivalently $A\bfU = - \Delta \bfU - \nabla p$, where
  $p = p(\bfU)$ the `pressure' is the unique $H^1$ function satisfying
  $\Delta p = -\mbox{div} (\Delta u)$ in the weak sense.}
As in the classical elliptic theory we have that
for any $\mathbf{f} \in (L^2(\DD))^3$, there exists a unique 
$\bfU \in D(A) = V_1 \cap (H^2(\DD))^3$ 
which satisfies
\begin{align}\label{eq:stk:reg}
   \| \bfU \|_{H^2} \leq  C \| \mathbf{f}\|, 
\end{align}
where $C$ is independent of $\mathbf{f}$.
  In what follows we
frequently denote $\bfU = A^{-1} P \mathbf{f}$.  

Since $A$ is a positive, self-adjoint operator which is unbounded on
the space $H_1$ with a compact inverse, by Hilbert's theorem there is
a complete orthonormal basis of eigenfunctions
$\{\mathbf{e}_k\}_{k \geq 1}$ of $A$ with the associated
non-decreasing sequence of eigenvalues $\lambda^{*}_k$ diverging to
infinity.   Take 
\begin{align}
  \text{$\mathcal{P}_N$ to be the projection onto the
  subspace $H_N := \mbox{span}\{\mathbf{e}_1, \ldots, \mathbf{e}_N\}$.}
  \label{eq:stnd:stk:proj}
\end{align}
Here the regularity theory as found in, say \cite{ConstantinFoias88, Temam2001},
show that each $\mathbf{e}_k$ is smooth and hence in particular $H_N \subset V$.

We also consider the associated linear evolution given as
\begin{align}
    \partial_t \bfU   - \mu \Delta \bfU = \nabla p + \mathbf{f}, 
  \quad \nabla \cdot \bfU = 0, \quad \bfU(0) = \bfU_0,
  \label{eq:stokes:linear:evolution}
\end{align}
for any parameter $\mu > 0$ and relative to the (sufficiently regular) data
$\mathbf{f}$, $\bfU_0$ supplemented with the boundary conditions \eqref{eq:bc:homo}.
Here, for any $\mathbf{f} \in L^2_{loc}([0,\infty); H_1)$ and $\bfU_0 \in H_1$
there exists a unique solution $\bfU$ of
\eqref{eq:stokes:linear:evolution} with $\bfU \in
L^2_{loc}([0,\infty);  V_1)\cap C([0,\infty); H_1)$.
Moreover, $A$ is the generator of an analytic semigroup which
we denote as $\{\exp(- \mu A t)\}_{t \geq 0}$.

\subsection{Finite Time Convergence Estimates}
\label{sec:f:t:conv:proof}

We next turn to the proof of Theorem~\ref{prop:conv:vel}:
\begin{proof}[Proof of Theorem~\ref{prop:conv:vel}]
  Take $\phep = \thep - \theta^0$ and $\Vep = \Uep - \bfU^0$ with
  $\bfU^0 = M (\theta^0)$, where $M$ is defined by
  \eqref{eq:inf:pr:conts}. Referring to
  \eqref{eq:B:eqn:vel:mod}--\eqref{eq:bc:homo} and
  \eqref{eq:vel:zero}--\eqref{eq:theta:zero} we see that $\phep$
  satisfies
\begin{align*}
	\pd{t}\phep - \Delta \phep &=
	 \rab \cdot v^{\eps}_{3} - \Vep \cdot \nabla \theta^0 - \Uep \cdot \nabla \phep,
	 \qquad \phep(0) = \thep(0) - \theta^0(0) := \phep_0 \,.
\end{align*}
Therefore, taking an $L^2$ inner product with $\phep$ and using that
$\nabla \cdot \Vep = 0$ we have
\begin{align*}
 \frac{1}{2} \frac{d}{dt} \| \phep\|^2 + \|\nabla \phep\|^2 
  &=  \int (\rab \cdot v^{\eps}_{d} - \Vep \cdot \nabla \theta^0) \phep dx
  \leq  \rab \|\Vep\|\| \phep\| +  \|\Vep\|_{L^6} \|\nabla \phep\| \| \theta^0\|_{L^3} \,.
\end{align*}
Hence from standard Sobolev embeddings, Young's inequality, and the
Poincar\'e inequality we obtain
\begin{align*}
   \frac{d}{dt} \| \phep\|^2  \leq C 
  \left( \| \theta^0 \|_{L^3}^2 + \rab^2 \right)\|\nabla \Vep\|^2 \,.
\end{align*}
Integrating in time we infer that
\begin{align}
  \sup_{s \in [0,t  ]} \| \phep(s \wedge \tau)  \|^2  \leq
  \|\phep_0\|^2 + 
   \sup_{s \in [0, t \wedge \tau ]} \left( \| \theta^0 (s) \|_{L^3}^2 
         + \rab^2 \right)\int_0^{t \wedge \tau} \|\nabla \Vep(t') \|^2 dt'
   \label{eq:phet}
\end{align}
for any $t >0$ and any stopping time $\tau \geq 0$.

We now turn to derive an evolution equation for $\Vep$. Recalling that
$\Uep$ and $\bfU^0$ satisfy respectively, \eqref{eq:B:eqn:vel:mod} and
\eqref{eq:vel:zero} 
we find 
\begin{align}
\eps(\pd{t} \Uep + \Uep \cdot \nabla \Uep) - \Delta \Vep  = \nabla q^\eps + \ra \hatk \phep,
\label{eq:vep:10}
\end{align}
where $q^\eps = \tilde{p}^\eps - \tilde{p}$ is the difference in the pressures.
On the other hand, recalling that $\bfU^0 = \ra A^{-1}( P(\hatk \theta^0))$,
we have
\begin{align}
   d \bfU^0 &=
	      -  \! \ra A^{-1} \!  P \! \left(\hatk \!\left( \bfU^0 \cdot \nabla\theta^0 \!
                                     - \Delta \theta^0 - \rab \cdot u_{3}^{0}\right) \! \right)  \! dt
	      + \ra \sum_{k =1}^N \! A^{-1} P(\hatk \sigma_k) dW^k\,.
	      \label{eq:vep:20}
\end{align}
Multiplying \eqref{eq:vep:10} by $\eps^{-1}$, subtracting the resulting system from \eqref{eq:vep:20}
and rearranging we obtain
\begin{align}
 d \Vep -& \frac{1}{\eps}\Delta \Vep  dt 
           = \frac{1}{\eps}\left(\nabla q^\eps  + \ra \hatk \phep \right) dt \notag\\
         &+ \!  \left(\ra A^{-1} \!  P \! \left( \hatk \! \left( \bfU^0 \cdot \nabla \theta^0 
           - \Delta \theta^0 + \rab \cdot u_{d}^{0}\right)  \! \right) \! -
  \Uep \cdot \nabla \Uep \right) \! dt - \ra \sum_{k =1}^N \! A^{-1} P(  \hatk \sigma_k) dW^k  \,,
  \label{eq:vept:evolution:eq:0}
\end{align}
with $\nabla \cdot \Vep = 0$.

Using \eqref{eq:vept:evolution:eq:0} we estimate $\Vep$ as follows.
The It\=o formula and \eqref{eq:vept:evolution:eq} yields
\begin{align}
  d\|\Vep\|^2  + \frac{2}{\eps} \|\nabla \Vep\|^2 dt
  =&
\frac{2}{\eps} \ra \langle \phep, v^{\eps}_d \rangle dt +
	2 \left\langle \ra  A^{-1}  P \! \left( \hatk \left( \bfU^0 \cdot \nabla\theta^0 - \Delta \theta^0
	- \rab \cdot u^{0}_{d}\right)  \right)-
	 \Uep \cdot \nabla \Uep, \Vep \right\rangle dt
  \notag\\
 &+ \ra^2 \sum_{k =1}^N | A^{-1} P( \hatk \sigma_k)|^2 dt
 - 2 \ra \sum_{k =1}^N \langle A^{-1} P(\hatk \sigma_k), \Vep \rangle dW^k
 \notag\\
 :=& (T_1 + T_2 + T_3 + T_4 + T_5 + T_6)dt + S dW.
 \label{eq:vept:evolution:eq}
 \end{align}
With the Young and Poincar\'e inequalities we have
\begin{align}
 |T_1| \leq \frac{1}{4 \eps} \|\nabla \Vep\|^2 + \frac{4\ra^2}{\eps} \|\phep \|^2.
 \label{eq:T1:bbd:og}
\end{align}
For $T_2$ we use that $A^{-1}$ is self-adjoint on $H$,
$D(A) \subset H$ and that $\bfU^0, \Vep$ are divergence free to obtain
\begin{align*}
|T_2| = 2 \ra \left|\int \bfU^0 \cdot \nabla \theta^0 (A^{-1}\Vep)_3 dx \right|
	= 2 \ra \left| \int \bfU^0 \cdot \nabla (A^{-1}\Vep)_3 \,  \theta^0  dx\right|
	\,,
\end{align*}
where $(A^{-1}\Vep)_3$ represents the third component of the vector field $A^{-1}\Vep$.
Hence \eqref{eq:stk:reg} and the imbedding $H^2 \hookrightarrow L^\infty$
imply
\begin{align}
|T_2| \leq 2 \ra \|\bfU^0\| \|\theta^0 \| \| \nabla (A^{-1}\Vep)\|_{L^\infty}
	\leq C \ra^2 \|\theta^0 \|^2 \| \nabla \Vep\|
	\leq \frac{1}{4 \eps}\| \nabla \Vep\|^2 + \eps C \ra^4  \|\theta^0 \|^4.
  \label{eq:T2:bbd:og}
\end{align}
For the terms $T_3$ and $T_4$ we use the regularity of the Stokes operator to obtain
\begin{align}
|T_3| + |T_4| 
  &\leq  \frac{1}{4 \eps} \|\nabla \Vep\|^2 
               + 4 \eps \ra^2 (\|\theta^0\|^2+ \rab^2 \|\bfU^0\|^2) \notag \\
  &\leq \frac{1}{4 \eps} \|\nabla \Vep\|^2 + \eps \ra^2 (\rab^2\ra^2 + 1) C \| \theta^0\|^2  \,.
 \label{eq:T34:bbd:og}
\end{align}

The most delicate term is $T_5$.   Here we take advantage of an additional cancellation 
to obtain extra regularity. Since $\Uep  = \Vep + \bfU^0$ we find
\begin{align}
  |T_5| &= 2 |\langle \Uep \cdot \nabla \bfU^0, \Vep \rangle|
          = 2 |\langle \Uep \cdot \nabla \Vep, \bfU^0 \rangle|
	\leq \frac{1}{4\eps} \|\nabla \Vep\|^2 +  4 \eps  \|\bfU^0\|^2_{L^\infty} \|\Uep\|^2
          \notag\\
       	&\leq \frac{1}{4\eps} \|\nabla \Vep\|^2 +  \eps  C\ra^2(\|\theta^0\|^4 +  \|\Uep\|^4),
      \label{eq:T5:bbd:og}
\end{align}
where we used the imbedding $H^2 \hookrightarrow L^\infty$ and \eqref{eq:stk:reg}
for the final bound.  Finally we observe $|T_6| \leq C \ra^2$.

Combining the bounds \eqref{eq:T1:bbd:og}--\eqref{eq:T5:bbd:og} and
rearranging in \eqref{eq:vept:evolution:eq} we find
\begin{align}
 d\|\Vep\|^2 \!  + \! \frac{1}{\eps}\|\nabla \Vep\|^2 dt
	&\leq \frac{4 \ra^2}{\eps} \|\phep \|^2 dt
	+  \eps C(1 + \ra^4)(1 + \rab^2) (\|\theta^0 \|^4  + \|\Uep\|^4 +1)dt  \notag\\
        &\quad + C \ra^2 dt +   2\ra \sum_{k =1}^N \langle A^{-1} P(\hatk \sigma_k), \Vep \rangle dW^k \,,
        \label{eq:vept:evo:est}
\end{align}
where the constant $C> 0$ is independent of $\ra, \rab$, and $\eps >0$.
Consequently, for any $t \geq 0$ and any stopping time $\tau$, we have
\begin{align}
  \int_0^{t \wedge \tau} \!\! \!\! \! \| \nabla \Vep\|^2 dt
	&\leq \eps \|\Vep(0)\|^2
       +  4 \ra^2 \int_0^{t \wedge \tau} \! \!\! \! \!(\|\phep \|^2 + \eps C) dt' 
	+ \eps^2 C(1+\Ra^4 )(1 + \rab^2)
	 \int_0^{t \wedge \tau} \! \! \!\! \!\left(  \|\theta^0\|^4 + \|\Uep\|^4 +1 \right)dt' 
          \notag\\
	&\quad -  \eps \ra \sum_{k =1}^N  \int_0^{t \wedge \tau} \langle A^{-1} P(\hatk \sigma_k), \Vep \rangle dW^k,
	\label{eq:vept:final:int:time:bnd}
\end{align}
where $C$ is independent of $\eps > 0$, $\ra, \rab$, and $\tau$.

Next for any $\kappa > 0$ define the stopping times
\begin{align}
  \tau_{\kappa} := \inf_{t \geq 0}\left\{\|\theta^0 (t)\|^2_{L^3} \geq \kappa \right\}\,.
    \label{eq:tau:kappa:stp}
\end{align}
From this definition and the bounds \eqref{eq:phet},
\eqref{eq:vept:final:int:time:bnd} we now infer
\begin{align*}
   \E \sup_{s \in [0,t  ]}& \| \phep(s \wedge \tau_\kappa)  \|^2   \notag\\
   &\leq \E \|\phep_0\|^2 +
  4 \ra^2 \left( \kappa + \rab^2 \right)
     \int_0^{t} \E \left(\sup_{s \in [0,t' ]} \| \phep(s \wedge \tau_\kappa)  \|^2\right)dt'
   + \eps \left( \kappa + \rab^2 \right) \bigl(\E\|\Vep(0)\|^2 + \ra^2 C t \bigr)
   \notag\\
   &\quad +	\eps^2 C ( \kappa + \rab^4 + 1) (\Ra^4 + 1) \int_0^{t }
	   \E   \left(   \|\theta^0\|^4 + \|\Uep\|^4 + 1\right)   dt',
\end{align*}
which implies with the Gronwall inequality that
\begin{align}
 \E \sup_{s \in [0,t  ]} \| \phep(s \wedge \tau_\kappa)  \|^2
	\leq \exp\left(  C (\ra^4 +1) \left( \kappa + \rab^4 + 1\right) (t+1)\right)
	         \left( \eps \mathcal{M}_\eps(t) + \E\| \phep_0 \|^2 \right),
	        \label{eq:phet:big:man}
\end{align}
where
\begin{align*}
  \mathcal{M}_\eps(t) := \E \| \Vep(0)\|^2
  +\int_0^t \left[ \eps \E \left(  \|\Uep\|^4 + \|\theta^0\|^4 \right)  + 1 \right] dt',
\end{align*}
and the constant $C$ is independent of $\kappa, \eps, \ra, \rab$, and $t$.
By \eqref{eq:fin:tm:MB} and our standing assumption that
\eqref{eq:exp:yo:yo:m:c} holds,
we observe that $\mathcal{M}_{\eps}$ is bounded independently of
$\eps > 0$ and we obtain
\begin{align}
\E \left(\sup_{s\in[0,t]}\|\phep(s)\|^{2}\indFn{\tau_\kappa > t}\right) \leq
 \E \sup_{s \in [0,t  ]} \| \phep(s \wedge \tau_\kappa)  \|^2
	\leq C_1 (\eps + \E\| \phep_0 \|^2 )  \exp\left(  C_{1}\kappa \right) ,
	        \label{eq:phet:big:man:2}
\end{align}
where the constant $C_1 = C(\ra, \rab, t)$ is independent of $\eps>0$ and $\kappa>0$.

Set $X_\eps(t) := \sup_{s \in [0,t]} \|\phep(s)\|^2$ and for each $t \geq 0$,
$\kappa > 0$, $\eps > 0$ define the sets
\begin{align*}
  E_{t, \kappa, \eps} := \left\{ \sup_{s \in [0,t]} \| \theta^0(s) \|^2_{L^3} 
                   \geq \kappa \right\} = \{\tau_{\kappa} \leq t\}.
\end{align*}
For each $t > 0$ one finds by the Markov inequality, Proposition
\ref{prop:lets:make:the:most:of:the:moments:that:count}, and the
assumption \eqref{eq:fin:tm:MB} that for sufficiently small
$\eta_1 =\eta_1(\rab, \eta) > 0$,
\begin{align}
\Prb ( E_{t, \kappa, \eps} )
  \leq e^{-\eta \kappa} \E\exp \left(\eta_1 \sup_{s \in [0, t]} \|\theta^0(s)\|^2_{L^3} \right)
  \leq C_2 e^{-\eta \kappa},
\label{eq:stop:smp}
\end{align}
where $C_2 = C_2(C_0, \ra, \rab, \|\sigma\|_{L^3}, \eta, t) > 0$ is independent of $\eps > 0$ and $\kappa > 0$.
On the other hand, for any $\gamma \in (0,1)$ we have
\begin{align}
  \E X_\eps(t)^\gamma  &=
    \sum_{k =0}^\infty \E \left(X_\eps(t)^\gamma 
     \indFn{k \leq \left(\sup\limits_{s \in [0,t]} \|\theta^0(s)\|^2_{L^3} \right)  < k + 1} \right)
  = \sum_{k =0}^\infty \E \left(X_\eps(t)^\gamma \indFn{\tau_k \leq t} \indFn{\tau_{k+1} > t} \right) \notag\\
  &\leq \sum_{k =0}^\infty \left( \E(X_\eps(t) \indFn{\tau_{k+1} > t} ) \right)^\gamma \left( \Prb( E_{t, \kappa, \eps}) \right)^{1-\gamma}
  \notag\\
  &\leq  C (\eps + \E\| \phep_0 \|^2 )^{\gamma}
  \sum_{k =0}^\infty  \exp(\gamma C_1 (k+1) -(1- \gamma) \eta k) \,,
  \label{eq:re:sum:trick}
\end{align}
where we have used \eqref{eq:phet:big:man:2} and \eqref{eq:stop:smp}
for the final bound.  Here,
$C = C(C_0, \ra, \rab, \|\sigma\|_{L^3}, \eta, t)$ is independent of
$\eps > 0$, $\E \| \phi^\eps_0\|^2$ and $C_1$ is the constant
appearing in \eqref{eq:phet:big:man:2}.  Thus when
$\gamma < \frac{\eta}{C_1 + \eta}$ the series in
\eqref{eq:re:sum:trick} converges.  Then for any $p>0$ and any
$\gamma < (\frac{\eta}{C_1 + \eta})\wedge p$ we find
\begin{align*}
  \E \sup_{s \in [0,t]} \|\phep(s)\|^p
  \leq C \left(\E \sup_{s \in [0,t]} (\|\Thep(s)\|^{2(p- \gamma)} + \|\Tho(s)\|^{2(p- \gamma)})\right)^{1/2}
  \left(\E \sup_{s \in [0,t]} \|\phep(s)\|^{2\gamma}\right)^{1/2}.
\end{align*}
Combing this bound with \eqref{eq:fin:tm:MB} and \eqref{eq:re:sum:trick}
we now obtain the first part of \eqref{eq:fin:time:conv}.

To address the second term in \eqref{eq:fin:time:conv} we return to
\eqref{eq:vept:final:int:time:bnd}.  Taking expected values we obtain
\begin{align*}
  \E \int_0^{t} \!\! \| \nabla \Vep\|^2 dt
	\leq& \eps (\E \|\Vep(0)\|^2 + C)  +  C \E\left(\sup_{s \in [0,t]} \|\phep(s) \|^2\right),
\end{align*}
where $C = C(\ra, \rab, t, C_0)$ is independent of $\eps > 0$.  Combining this observation
with the previous bound, the proof of Theorem~\ref{prop:conv:vel} is now complete.
\end{proof}

\subsection{Approximation up to Initial Conditions: The Corrector}
\label{eq:corrector}

We next formally derive and then rigorously analyze a refined
approximation of \eqref{eq:B:eqn:vel:mod}--\eqref{eq:B:theta:eps}. By
Theorem~\ref{prop:conv:vel}, the velocity component $M(\theta^0)$ of
the limit system \eqref{eq:vel:zero}--\eqref{eq:theta:zero} well
approximates the velocity field $\bfU^\varepsilon$ of the full system
\eqref{eq:B:eqn:vel:mod}--\eqref{eq:B:theta:eps} in the norm
$L^2([0, t], H^1(\mathcal{D}))$ for each fixed $t > 0$.  Also
Theorem~\ref{Thm:Main:Thm:Precise}, (ii) shows that the invariant
measure of the limit system approximates any invariant state of the
full system, which can be interpreted as a approximation of laws of
solutions as $t \to \infty$.  On the other hand, we do not expect
\eqref{eq:vel:zero}--\eqref{eq:theta:zero} to accurately describe the
behavior of \eqref{eq:B:eqn:vel:mod}--\eqref{eq:B:theta:eps} up to
$t = 0$ due the presence of a (initial time) boundary layer.

We next derive the so called `corrector equation' which provides
effective dynamics for
\eqref{eq:B:eqn:vel:mod}--\eqref{eq:B:theta:eps} and which is globally
well-posed and whose velocity component remains close to the dynamics of
\eqref{eq:B:eqn:vel:mod}--\eqref{eq:B:theta:eps} in
$L^\infty([0, T], L^2(\mathcal{D}))$, that is, even up to time zero.
Note that similar considerations motivate the analysis in
\cite{Wang2004b} which treats such small time approximations in the
deterministic setting.

\subsubsection*{Formal Derivation}

In order to identify multiple time scales in
\eqref{eq:B:eqn:vel:mod}--\eqref{eq:B:theta:eps} we introduce an
additional `slow time' variable $\varsigma = \eps t$.  We then replace
\begin{align*}
  \partial_t \rightarrow \partial_t + \frac{1}{\eps} \partial_\varsigma.
\end{align*}
Under this ansatz the momentum equation \eqref{eq:B:eqn:vel:mod} becomes
\begin{align*}
  \eps(\pd{t} \Uep + \Uep \cdot \nabla \Uep)  + \pd{\varsigma}\Uep - \Delta \Uep
   = \nabla \tilde{p}^\eps  + \ra \hatk \Thep 
\end{align*}
Dropping the terms of order $\varepsilon$ and using Duhamel's formula we obtain
\begin{align}
  \Uep(\varsigma) = e^{-A\varsigma} \Uep(0) + \int_0^{\varsigma} e^{-A(\varsigma -r)} P(\ra \hatk \Thep) dr
  \label{eq:var:frm:ass}
\end{align}
where as in Section~\ref{sec:stokes}, $e^{-A\varsigma}$ denotes
the semigroup whose generator is the Stokes operator $A$.

The form of \eqref{eq:B:eqn:vel:mod} suggests that $\bfU^\eps$
fluctuates rapidly in comparison to $\theta^\eps$.   Under the 
further ansatz that there is a clear separation of time scales between
the motion of $\Uep$ and that of $\Thep$ we suppose
that $\Thep$ is independent of $\varsigma$.  
From \eqref{eq:var:frm:ass} this yields
\begin{align}
    \Uep(\varsigma) 
      &= e^{- A \varsigma} \Uep(0) + A^{-1}(P( \ra \cdot \hatk \Thept)) 
        - e^{- A \varsigma} A^{-1}(P(\ra  \hat{k}\theta^\eps))  \notag\\
      &:= A^{-1}(P( \ra \cdot \hatk \Thept)) + \bfW^\eps(\varsigma) \,,
   \label{eq:my:ass:1}
\end{align}
where $\bfW^\eps$ solves
\begin{align}
  \pd{\varsigma} \bfW^\eps - \Delta \bfW^\eps = \nabla q^\eps, \quad \bfW^\eps(0) = \Uep(0) - \mathbf{y}^\eps
  \quad \text{ and } \; -\Delta \mathbf{y}^\eps = \nabla p^\eps + \ra \hat{k}\theta^\eps(0)
   \label{eq:my:ass:2}
\end{align}
and we have made the further approximation that $\theta^\eps(t) \approx \theta^\eps(0)$ relative to
the slow time scale $\varsigma$.
 
Next,  we return to the original time scale $t$
and obtain the effective dynamics for 
\eqref{eq:B:eqn:vel:mod}--\eqref{eq:B:theta:eps} starting from any initial 
condition $(\theta^\eps_0, \bfU^\eps_0) \in \HH$,
\begin{align}
 &-\Delta \tilde{\bfU}^\eps  = \nabla p^\eps +\ra \cdot \hatk \Thept + \Delta \bfW^{\eps}(t),  
   \quad \nabla \cdot \Uept = 0\,,
  \label{eq:Ulim:eft}\\
 d \Thept &+ \left( \tilde{\bfU}^\eps \cdot \nabla \Thept - \Delta \Thept \right) dt 
            = \rab \cdot \tilde{u}_{d}^{\eps}dt 
           + \sum_{k =1}^N \sigma_k dW^k,  \quad \Thept(0) = \theta^\eps_0,
  \label{eq:Thlim:eft}
\end{align}
where $\bfW^\eps$ solves
\begin{align}
\begin{split}
&\pd{t}\bfW^{\eps} - \frac{1}{\eps}\Delta \bfW^{\eps}   =  \frac{1}{\eps}\nabla q^\eps, \quad \nabla \cdot \bfW^{\eps} = 0,\\
&\bfW^{\eps}(0)= \mathcal{P}_{N^\eps} \Uep_{0} +  \bfY^\eps,
	\quad \textrm{ and } \quad
	-\Delta \bfY^\eps =  \nabla p^\eps +  \ra \cdot \hatk \Thep_{0},\quad \nabla \cdot \bfY^\eps = 0.
\end{split}	
	\label{eq:wep}
\end{align}
We supplement \eqref{eq:Ulim:eft}--\eqref{eq:wep} with boundary
conditions \eqref{eq:bc:homo}.  Note that for technical reasons we
slightly modify the initial condition for $\bfW^\eps$ compared to
\eqref{eq:my:ass:2} by taking
$\tilde{\bfU}^\eps(0) = \mathcal{P}_{N^\eps} \Uep_{0}$, where we
recall that $\mathcal{P}_{N^\eps}$ is the projection onto the first
$N^\eps$ modes of the Stokes operator $A$ as in \eqref{eq:wep:IC:cond}
and $N^\eps$ satisfies
\begin{align}
  \eps (\lambda^{*}_{N^{\eps}})^2 \sim 1.
  \label{eq:wep:IC:cond}
\end{align}
This specification $\tilde{\bfU}^\eps(0)$ is only used to avoid regularity issues at the
initial time in \eqref{eq:Ulim:eft} and as such, a number of other
modifications can be employed.

\subsubsection{Rigorous Error Estimates for the Corrector}

The following theorem asserts that \eqref{eq:Ulim:eft}--\eqref{eq:wep}
approximates \eqref{eq:B:eqn:vel:mod}--\eqref{eq:B:theta:eps} in the
desired norms.

\begin{Thm}
  \label{lem:cor:to:LargePrandlt:og}
  Fix any $\eps > 0$ choose $N_\eps$ satisfying \eqref{eq:wep:IC:cond}. 
  Suppose we are given a sequence $\{\mu^{0,\eps}\}_{\eps \in (0,1)} \subset \Pr(\HH)$ 
  such that
  \begin{align}
  	\sup_{0 < \eps \leq 1} \int_\HH \left(\| \nabla \bfU\|^2 
    +  \exp( \eta \|\bfU\|^2 + \| \theta\|_{L^3}^2)\right) d \mu^{0, \eps}(\bfU, \theta)  
	< \infty.
	\label{eq:uni:IC}
  \end{align}
  For each $\eps > 0$ we consider a martingale solutions
  $(\bfU^\eps, \theta^\eps)$ of
  \eqref{eq:B:eqn:vel:mod}--\eqref{eq:bc:homo} as in
  Proposition~\ref{prop:existence:sol:BEs}, (i).  We suppose that each
  $(\bfU^\eps, \theta^\eps)$ has initial conditions distributed according to
  the distribution $\mu^{0, \eps}$ and satisfies the
  uniform moment bound \eqref{eq:exp:yo:yo:m:c}.  In particular, for
  each $\eps > 0$ the corresponding martingale solution fixes a
  stochastic basis $\mathcal{S}$ and defines
  $(\theta^\eps_0, \bfU^\eps_0) := (\theta^\eps(0), \bfU^\eps(0))$.
  Then,
  \begin{itemize}
  \item[(i)] up to the specification of the stochastic basis $\mathcal{S}$
    and the initial conditions $(\bfU^\eps_0, \theta^\eps_0)$,
    there exists a unique, adapted 
    \begin{align*}
      \Thept \in L^2(\Omega; L^2_{loc}([0,\infty); V) \cap C([0,\infty); H))
    \end{align*}
    solving \eqref{eq:Ulim:eft}--\eqref{eq:wep}.
  \item[(ii)] For any $t > 0$ there is a
    $\gamma_0 = \gamma_0(\rab, \ra, t)$ such that, for any
    $0< \gamma \leq \gamma_0$, $p \geq \gamma$, and $\eps > 0$,
\begin{align}
  \E \left( \sup_{s \in [0,t]} \|\Thept(s) - \theta^{\eps}(s)\|^{p}\right) \leq C\eps^\gamma \,, \quad
  \E\left(  \sup_{s \in [0,t]} \|\Uept(s) - \Uep(s)\|^{p}   \right) \leq C\eps^{\gamma/4} \,,
   \label{eq:cor:to:LargePrandlt:og:est}
\end{align}
where the constants $C = C(\rab, \ra, t, p)$ and $\gamma_0$ and are both independent of $\eps > 0$.
\end{itemize}
\end{Thm}

\begin{proof}
  As in Proposition~\ref{prop:existence:sol:BEs}, (iii), (iv) the well
  posedness of \eqref{eq:Ulim:eft}--\eqref{eq:wep} is standard and can
  be established along similar lines as one would  for the 2D Stochastic
  Navier-Stokes equations.   To see this observe that, although we are working in 3D, we have
  one more degree of smoothing in the constitutive law, \eqref{eq:Ulim:eft}, producing $\tilde{\bfU}^\eps$ from 
  $\tilde{\theta}^\eps$ compared to Biot-Savart in the Navier-Stokes equation.  We omit 
  further details here again referring the reader to e.g. \cite{ZabczykDaPrato1992,FlandoliGatarek1,
  DebusscheGlattHoltzTemam1}.

  To prove \eqref{eq:cor:to:LargePrandlt:og:est} we reuse
  many of the estimates from the proof of Theorem~\ref{prop:conv:vel}.  Taking $\Vept = \Uep - \Uept$ and
  $\phet = \thep -\Thept$, we have
  \begin{align*}
    \pd{t}\phet - \Delta \phet &=
       \rab \cdot \tilde{v}^{\eps}_{d} - \Vept \cdot \nabla \Thept - \Uep \cdot \nabla \phet,
                                 \qquad \phet(0) = 0\,,
  \end{align*}
  and hence repeating the arguments leading to \eqref{eq:phet} we obtain the estimate
  \begin{align}
    \sup_{s \in [0,t ]} \| \phet(s \wedge \tau) \|^2 \leq
        \sup_{s \in [0, t \wedge \tau ]} \left( \| \Thept (s) \|_{L^3}^2 +
                   \rab^2 \right)\int_0^{t \wedge \tau} \|\nabla \Vept(t') \|^2 dt'
   \label{eq:phet:c}
  \end{align}
  for any $t >0$ and any stopping time $\tau \geq 0$. 
  By  \eqref{eq:Ulim:eft} and  \eqref{eq:B:eqn:vel:mod},  $\Vept$ satisfies
   \begin{align}
    \eps(\pd{t} \Uep + \Uep \cdot \nabla \Uep) -
                   \Delta \Vept = \nabla q^\eps + \ra \hatk \phet - \Delta \bfW^{\eps},
    \label{eq:vep:1:c}
  \end{align}
  where $\bfW^{\eps}$ solves \eqref{eq:wep}.
  Referring to \eqref{eq:Ulim:eft} we have
  \begin{align}
	\Uept = A^{-1} (\ra P(\hatk \Thept)) -  \bfW^{\eps} \,,
	\label{eq:Uept:sol}
  \end{align}
  and consequently \eqref{eq:Thlim:eft} and \eqref{eq:wep} yield
  \begin{align}
    d \Uept &= -\pd{t} \bfW^{\eps} \! + \ra A^{-1} P(\hatk d \Thept) \notag\\
            &= -\frac{1}{\eps} (\Delta \bfW^{\eps} + \nabla q^\eps) dt \!
              - \! \ra A^{-1}  P \! \left(\hatk \!\left( \Uept \cdot \nabla\Thept \!
                    - \Delta \Thept - \rab \cdot \tilde{u}_{d}^{\eps}\right) \! \right)  dt
              + \ra \sum_{k =1}^N \! A^{-1} P(\hatk \sigma_k) dW^k\,.
	      \label{eq:vep:2:c}
  \end{align}
  Multiplying \eqref{eq:vep:1:c} by $\eps^{-1}$, subtracting
   \eqref{eq:vep:2:c} and rearranging we obtain
  \begin{align}
    d \Vept - \frac{1}{\eps}\Delta \Vept dt =&
               \frac{1}{\eps}\left(\nabla \tilde{q}^\eps + \ra \hatk \phet \right) dt
             + \!  \left( \ra A^{-1} P \! \left( \hatk \! \left( \Uept \cdot \nabla \Thept
               - \Delta \Thept - \rab \cdot \tilde{u}_{d}^{\eps}\right) \! \right) \!
               - \Uep \cdot \nabla \Uep \right) \! dt
               \notag\\
             &- \ra \sum_{k =1}^N \! A^{-1} P( \hatk \sigma_k) dW^k,
            \qquad  \Vept(0) = (I- \mathcal{P}_N) \Uep_0, 
  \label{eq:vept:evo:eq:0:c}
  \end{align}
  with $\nabla \cdot \Vept = 0$.  Here note the close similarity between \eqref{eq:vept:evolution:eq:0} 
  and \eqref{eq:vept:evo:eq:0:c}; in view of \eqref{eq:Uept:sol}, \eqref{eq:wep} the primary distinction here
  is in the initial condition.   
  
  As above in \eqref{eq:vept:evolution:eq}, the It\=o formula implies
  \begin{align}
  d\|\Vept\|^2 + \frac{2}{\eps} \|\nabla \Vept\|^2 dt =&
       \frac{2}{\eps} \ra \langle \phet, \tilde{v}^{\eps}_d \rangle dt
      + 2 \left\langle \ra A^{-1} \! P \! \left( \hatk \left( \Uept \cdot\nabla\Thept
          - \Delta \Thept - \rab \cdot \tilde{u}^{\eps}_{d}\right)\right) 
          - \Uep \cdot \nabla \Uep, \Vept \right\rangle dt
           \notag\\
  &+ \ra^2 \sum_{k =1}^N | A^{-1} P( \hatk \sigma_k)|^2 dt -
    2 \ra \sum_{k =1}^N \langle A^{-1} P(\hatk \sigma_k), \Vept \rangle dW^k 
    \notag\\
    :=& (T_1 + T_2 + T_3 + T_4 + T_5 + T_6) dt + S dW.
 \label{eq:vept:evo:eq:c}
  \end{align}
  We now estimate \eqref{eq:vept:evo:eq:c} with bounds similar to
  \eqref{eq:T1:bbd:og}--\eqref{eq:T5:bbd:og}.   Here, bounds on 
  $\bfU^0$ need to be replaced with appropriate estimates for $\Uept$.
  For the terms $ T_2, T_4$ we simply treat $\Uept$ terms as
  \begin{align}
    \|\Uept(t)\|^2  \leq C (\ra^2 \|\Thept(t)\|^2 + \|\bfW(t)\|^2) \leq  
    C (\ra^2 \|\Thept(t)\|^2 + \| \bfU^\eps_0\|^2 + \ra^2 \|\theta^\eps_0\|^2) \,.
        \label{eq:d:cor:u:est:2}
  \end{align}
  The estimate \eqref{eq:T5:bbd:og} on the term $T_5$ involves an
  $L^\infty$ bound on $\Uept$ and thus requires a bit more care. In this case
  \begin{align*}
    \|\Uept(t)\|_{L^\infty}^2 \leq C \|\Uept(t)\|_{H^2}^2 \leq C (\ra^2 \|\Thept(t)\|^2 + \|\bfW(t)\|^2_{H^2}).
  \end{align*}
  By standard properties of analytic semigroups, the inverse Poincar\' e inequality, and the form of $\bfW(0)$  one has 
  \begin{align}
  \|\bfW(t)\|^2_{H^2}  
  &= \|e^{At/\eps}\bfW(0)\|^2_{H^2}  \leq  C \|\bfW(0)\|^2_{H^2} 
    \leq 
    C\ra^2  \|\theta^\eps_0\|^2 + C \| \mathcal{P}_{N^\eps}\Uept_0\|^2_{H^2}
    \notag\\
  &\leq
    C\ra^2  \|\theta^\eps_0\|^2 + C (\lambda_{N^\eps}^*)^2 \|\Uept_0\|^2 \,,
  \end{align}  
  where $C$ is independent of $\eps \in (0, 1)$.  Combining these two estimate and again taking advantage of 
  the cancelation from $\Uep = \Vept + \Uept$
  \begin{align}
  	|T_5| 
	\leq  \frac{1}{4 \eps} \|\nabla \Vept\|^2 
               + 4 \eps \| \Uept\|_{L^\infty}^2 \|\Uep\|^2
         \leq	\frac{1}{4 \eps} \|\nabla \Vept\|^2 + C(\ra^2 + 1) ( \|\theta^\eps_0\|^4 + \|\Uep_0\|^4 + \|\Uep\|^4).
                 \label{eq:d:cor:u:est:1}
  \end{align}
  Observe that in comparison to \eqref{eq:T5:bbd:og}, the additional power of $\eps$
  is used to cancel $(\lambda_{N^\eps}^*)^2$. 
  
  Combining the analogues of \eqref{eq:T1:bbd:og}--\eqref{eq:T34:bbd:og} with \eqref{eq:d:cor:u:est:2} and 
  using \eqref{eq:d:cor:u:est:2} with \eqref{eq:vept:evo:eq:c} we obtain
  \begin{align}
    d\|\Vept\|^2 \!  + \! \frac{1}{\eps}\|\nabla \Vept\|^2 dt
    &\leq \frac{C \ra^2}{\eps} \|\phet \|^2 dt 
      + \!C(1\!+ \!\ra^4)(1 \!+\! \rab^2) (\|\Thept \|^4 \! + \| \theta^\eps_{0}\|^4\!
      + \|\bfU^\eps_{0}\|^4 \! + \|\Uep\|^4 \! + 1)dt
      \notag\\
    &-2\ra \sum_{k =1}^N \langle A^{-1}( P(\hatk \sigma_k)), \Vept \rangle dW^k \,,
      \label{eq:vept:evo:est:c}
  \end{align}
  where the constant $C> 0$ is independent of $\ra, \rab$, and
  $\eps >0$.  We now use \eqref{eq:vept:evo:est:c} with
  \eqref{eq:phet:c} and repeate the stopping time argument as in
  \eqref{eq:phet:big:man}--\eqref{eq:re:sum:trick} to infer the first
  part of the \eqref{eq:cor:to:LargePrandlt:og:est}.\footnote{Note
    that the loss of the $\eps$ in front of the second term after the
    inequality in \eqref{eq:vept:evo:est:c} in comparison
    \eqref{eq:vept:evo:est} does not charge the ultimate outcome of
    this bound as we only required an $\eps$-independent upper bound
    for $\mathcal{M}_\eps$ in \eqref{eq:phet:big:man:2}.}

  We turn next to the the convergence of the velocity fields, 
  the second part of  \eqref{eq:cor:to:LargePrandlt:og:est}. We obtain from
  \eqref{eq:vept:evo:eq:c} and the pointwise bounds yielding
  the drift terms in \eqref{eq:vept:evo:est:c} that
  \begin{align}
    \|\Vept(t)\|^2
    &\leq  \exp\left( - \frac{t}{\eps}\right) \|\Vept_0\|^2 +
        C\int_0^t \exp\left( - \frac{t-s}{\eps} \right)
          \left(\frac{\ra^2}{\eps}\|\phet(s) \|^2
          +  \mathcal{R}_\eps(s) \right)ds
          + \mathcal{X}_\eps(t),
    \notag\\
    &\leq \exp\left( - \frac{t}{\eps}\right) \|\Vept_0\|^2 
          +  C \ra^2 \sup_{s \in [0,t]} \| \phet(s) \|^2
          +\eps \sup_{s \in [0,t]}\mathcal{R}_\eps(s)
          +\mathcal{X}_\eps(t) \,,
          \label{eq:key:cor:est:1}
  \end{align}
  where
  \begin{align*}
    \mathcal{R}_\eps(t) := C(1+ \ra^4)(1 + \rab^2)
    (\|\Thept \|^4  + \|\theta^\eps_{0}\|^4
      + \|\bfU^\eps_{0}\|^4 + \|\Uep\|^4  + 1)
  \end{align*}
  and
  \begin{align*}
    \mathcal{X}_\eps(t) := - 2\ra\int_0^t\exp\left( - \frac{t-s}{\eps} \right)
    \sum_{k =1}^N \langle A^{-1}( P(\hatk \sigma_k)), \Vept \rangle dW^k
    =: \int_0^t \exp \left( - \frac{t-s}{\eps} \right) g(s) dW.
  \end{align*}
Using the inverse Poincar\' e inequality and \eqref{eq:wep:IC:cond} one has
\begin{align}
  \exp\left( - \frac{t}{\eps}\right) \|\Vept_0\|^2 \leq  \|(I - P_{N^\eps}) \Uep_0\|^2 
  \leq C (\lambda_{N^\eps}^*)^{-1}  \|\nabla \Uep_0\|^2
  \leq C \eps  \|\nabla \Uep_0\|^2 \,.
    \label{ices}
\end{align}
Therefore combining \eqref{eq:key:cor:est:1} with \eqref{ices}, using the bound already obtained
for $\|\phet(s) \|$ in (\ref{eq:cor:to:LargePrandlt:og:est}) and the uniform bounds (\ref{eq:exp:yo:yo:m:c}), 
(\ref{eq:uni:IC})
\begin{align}
  \E \sup_{s \in [0, t]}   \|\Vept(t)\|^p 
    \leq C(\eps^{p/2} + \eps^\gamma) + C\E \sup_{s \in [0,t]} |\mathcal{X}_\eps(s)|^{p/2}
          \label{eq:key:cor:est:3}
\end{align}
for any $p > 0$, where $\gamma = \min\{p, \gamma_0\}$ is obtained from the bound $\|\phet(s) \|$ and 
the constant $C = C(\ra, \rab, t, p)$ is independent of $\eps > 0$.

In order to estimate $\mathcal{X}_\eps$  observe that this process satisfies
\begin{align*}
    d \mathcal{X}_\eps + \frac{1}{\eps}\mathcal{X}_\eps dt = g dW, \quad \mathcal{X}_\eps(0) = 0,
\end{align*}
and hence, by the It\=o lemma,
\begin{align*}
	d \mathcal{X}_\eps^2 + \frac{2}{\eps} \mathcal{X}_\eps^2 dt = g^2 dt +  2 g \mathcal{X}_\eps dW \,.
\end{align*}  
Consequently,
\begin{align}
    \E \sup_{s \in [0,t]} |\mathcal{X}_\eps(s)|^{p/2}
       \leq C\E \sup_{s \in [0,t]} \left| \int_0^s g \mathcal{X}_\eps dW\right|^{p/4} + C\E \left( \int_0^t g^2 ds\right)^{p/4}.
          \label{eq:key:cor:est:4}
\end{align}
With the Burkholder-David-Gundy inequality and Young's inequality we have
\begin{align}
  \E \sup_{s \in [0,t]} \left| \int_0^s g \mathcal{X}_\eps dW\right|^{p/4}
  \leq
      C \E \left( \int_0^t g^2 \mathcal{X}_\eps^2 ds \right)^{p/8}
  \leq \frac{1}{2} \E  \sup_{s \in [0,t]} |\mathcal{X}_\eps|^{p/2} + C \E \left( \int_0^t \| \Vept\|^2 ds \right)^{p/4}.
          \label{eq:key:cor:est:5}
\end{align}
With \eqref{eq:key:cor:est:4}, \eqref{eq:key:cor:est:5},  the bound \eqref{eq:key:cor:est:3} now yields
\begin{align}
  \E \sup_{s \in [0, t]}   \|\Vept(t)\|^p 
    \leq C(\eps^{p/2} + \eps^\gamma) + C \E \left( \int_0^t \| \Vept\|^2 ds \right)^{p/4}.
          \label{eq:key:cor:est:3}
\end{align}
On the other hand, from (\ref{eq:vept:evo:est:c}), using for a second time
the existing bounds on $\|\phet(s) \|$ in (\ref{eq:cor:to:LargePrandlt:og:est}) and (\ref{eq:exp:yo:yo:m:c}),
we have
\begin{align}
    \E \int_0^t\|\nabla \Vept\|^2 dt
    &\leq \eps\E \|\Vept_0\|^2 + C \ra^2 \int_0^t \E\|\phet \|^2 ds 
      + \eps \E \int_0^t \mathcal{R}_\eps ds
    \leq C(\eps+ \eps^\gamma),
   \label{eq:key:cor:est:6}
\end{align}
where the constant $C$ depends on $t$, $\ra$, $\rab$ but again is
independent of $\eps > 0$.  When $p \leq 4$ the second portion of the
desired inequality (\ref{eq:cor:to:LargePrandlt:og:est}) now follows
from \eqref{eq:key:cor:est:3}, \eqref{eq:key:cor:est:6}, and H\"older's
inequality.  On the other hand, when $p > 4$ then we estimate
\begin{align*}
  \E \left( \int_0^t \| \Vept\|^2 ds \right)^{p/4} 
  \leq 
  \E \sup_{s \in [0,t]} \|\Vept\|^{p-2} + \E \int_0^t \|\Vept\|^2 ds
  \leq \sup_{s \in [0,t]} \|\Vept\|^{p-2} + C(\eps+ \eps^\gamma),
\end{align*}
so that the second part of (\ref{eq:cor:to:LargePrandlt:og:est})
follows in this later case with \eqref{eq:key:cor:est:6} and an
iterative argument.  This completes the proof of
Theorem~\ref{lem:cor:to:LargePrandlt:og}, (ii).
\end{proof}

\section{Convergence of the Nusselt number}
\label{sec:conv:nus}

In this final section we prove Corollary \ref{cor:main}, note that
this is not an immediate consequence of
Theorem~\ref{Thm:Main:Thm:Precise}.  Indeed, the Nusselt number $Nu$
(cf. \eqref{eq:nu:unique}) is defined as a statistical average of the
observable
\begin{align}\label{dfph}
  \phi_{Nu}(\textbf{u},\theta)
  =\frac{1}{|\mathcal{D}|}\int_{\mathcal{D}}u_2 \theta dx,
\end{align}
but (\ref{eq:ext:obs}) is not satisfied since  
 $\phi_{Nu}\notin V(\HH)$. Nevertheless,  Corollary \ref{cor:main} follows from a
combination of Theorem~\ref{Thm:Main:Thm:Precise} with exponential moment
bounds on the unique invariant measures.

\begin{proof}[Proof of Corollary \ref{cor:main}]
  In two space dimensions, with $N=\infty$ and $Pr=Pr(\ra,\rab)$ large
  enough (or $\eps = Pr^{-1}$ small enough), \cite[Theorem
  1.3]{FoldesGlattHoltzRichardsWhitehead2015} and Theorem
  \ref{Thm:Main:Thm:Precise} yields respectively  unique invariant
  measures $\mu_{\eps}$ of
  \eqref{eq:B:eqn:vel:mod}--\eqref{eq:B:theta:eps} and $\mu_0$ for
  \eqref{eq:vel:zero}-\eqref{eq:theta:zero}.  We aim to show that
  $\lim_{\eps \rightarrow 0}(Nu)_{\eps} = (Nu)_{0}$. To simplify the
  notation, write $\phi$ instead of $\phi_{Nu}$ defined by
  \eqref{dfph}.

   Consider
  a smooth cut-off function $\psi:[0,\infty)\rightarrow [0,1]$
  satisfying $\psi \equiv 1$ for $x\in[0,1]$, $\psi \equiv 0$ for
  $x\geq 2$, and $|\psi '|\leq 2$ everywhere.  Denote
  $\psi_{R}(\cdot) := \psi(\cdot/R)$ and write
\begin{align*}
&\left|\int_{\HH} \phi(\textbf{u},\theta)
     d\mu_{\eps}(\textbf{u},\theta) - \int_{H}\phi (L(\theta))
  d\mu_{0}(\theta)\right|  \\
&\leq
   \left|\int_{\HH} \phi(\textbf{u},\theta)\psi_{R}(\|\mathbf{u}\|^2 +
     \|\theta\|^2)d\mu_{\eps}(\textbf{u},\theta) 
         - \int_{H}\phi (L(\theta))\psi_{R}(\|L(\theta)\|^2)
      d\mu_{0}(\theta)\right| \\
& \quad + \left|\int_{\HH} \phi (\textbf{u},\theta)
  (1-\psi_{R}(\|\mathbf{u}\|^2 + \|\theta\|^2))
  d\mu_{\eps}(\textbf{u},\theta)\right| + \left|\int_{H}\phi(L(\theta))
   (1-\psi_{R}(\|L(\theta)\|^2))d\mu_{0}(\theta)\right| \\
   & =: I_{1} + I_{2} + I_{3}.
\end{align*}
We bound the first term by observing that
$\phi_{R}(\textbf{u},\theta) :=
\phi(\textbf{u},\theta)\psi_{R}(\|\mathbf{u}\|^2 + \|\theta\|^2)$
satisfies
\begin{align}\label{eq:semi:bound}
\left[\phi_{R}\right]_{\eta} \leq C(R+1),
\end{align}
for a constant $C>0$.  It follows from \eqref{eq:semi:bound} and
Theorem \ref{Thm:Main:Thm:Precise} that
$I_1\leq \tilde{C}(R+1)\eps^{\tilde{q}}$.  We control $I_2$ using
Markov's inequality and \eqref{eq:exp:yomoment:cond}, which yields,
for any $\eta>0$ sufficiently small
\begin{align*}
\mu_{\eps}(E_{R}^{c}) &\leq e^{-\eta R}\int_{\HH}\exp(\eta(\|\mathbf{u}\|^{2} + \|\theta\|^{2}))
d\mu_{\eps}(\mathbf{u},\theta)
\leq C_{0} e^{-\eta R}.
\end{align*}
uniformly in $\eps>0$, where
$E_{R}= \{\|\mathbf{u}\|^{2} + \|\theta\|^2 \leq R\}$.  Notice that
$1-\psi_{R}(\|u\|^2 + \|\theta\|^2) \leq \mathbbm{1}_{E_{R}^{c}}$, and
observe that for fixed $\eta>0$, on the set $E_{R}^{c}$ we have
$|\phi_{Nu}(\mathbf{u},\theta)|\leq \exp(\eta(\|\mathbf{u}\|^{2} +
\|\theta\|^{2})/2)$
for $R=R(\eta)$ sufficiently large.  By H\"{o}lder's inequality we
obtain
\begin{align*}
I_{2} &\leq  C\left(\int_{\HH}\exp(\eta(\|\mathbf{u}\|^{2} + \|\theta\|^{2}))d\mu_{\eps}(\mathbf{u},\theta)
\right)^{1/2}\left(\mu_{\eps}(E_{R}^{c})\right)^{1/2} \leq C C_{0}^{1/2} e^{-\eta R/2}.
\end{align*}
We can control $I_3$ similarly by using exponential moment bounds for the invariant measure $\mu_0$,
and the estimate $\|L(\theta)\|\leq C\|\theta\|$.
Thus, 
\begin{align}
\left|\int_{\HH} \phi_{Nu(\eps)}(\textbf{u},\theta)d\mu_{\eps}(\textbf{u},\theta) - \int_{H}\phi_{Nu(0)}(L(\theta))d\mu_{0}(\theta)\right|  \leq C((R+1)\eps^{\tilde{q}} + e^{-\eta R/2}),
\end{align}
and we can make this expression less than any $\delta>0$ by taking $R=R(\delta)$ sufficiently large
and then $\eps = \eps(R,\delta)$ sufficiently small.
\end{proof}

\begin{Rmk}
  From the proof above we can easily infer that Corollary
  \ref{cor:main} will also apply to any observable $\phi$ on the
  extended phase space that is locally Lipschitz and sub-exponential
  at infinity (for a sufficiently small exponential power $\eta>0$
  dictated by \eqref{eq:exp:yomoment:cond}).  We have chosen the
  convergence of the Nusselt number to emphasize the physical
  significance of our results.
\end{Rmk}

\appendix

\section{Appendix: Moment Bounds for Stochastic Drift-Diffusion Equations}
\label{sec:mom:est:drift:diff:eqn}

In this appendix we collect some moment bounds proved in
\cite{FoldesGlattHoltzRichardsWhitehead2015} which have been used in
the analysis above.

As in \cite{FoldesGlattHoltzRichardsWhitehead2015} we consider the
following class of stochastic divergence-free drift diffusion systems
\begin{align}
  d \xi + \bfV \cdot \nabla \xi dt = (\rab \cdot v_3 + \Delta \xi)dt + \sum_{k =1}^N \sigma_k dW^k, \quad \xi(0) = \xi_0
  \label{eq:bous:drift:diff:stoch:sys:1}
\end{align}
evolving on the three dimensional domain
$\DD = [ 0, L]^2 \times [0,1]$.  Here $\rab >0$ is a fixed parameter
and $\bfV = (v_1, v_2, v_3)$ is any sufficiently regular and adapted,
divergence free vector field.  Both $\bfV$ and $\xi$ are supposed to
satisfy the boundary condition \eqref{eq:bc:homo}.  Recall that by the
change of variable $T = \xi + \rab(1 - z)$ we may reformulate
\eqref{eq:bous:drift:diff:stoch:sys:1} as
\begin{align}
  d T + \bfV \cdot \nabla T dt = \Delta Tdt + \sum_{k =1}^N \sigma_k dW^k, \quad T(0) = T_0  = \xi_0 + \rab(1 - z)\,,
    \label{eq:bous:drift:diff:stoch:sys:2}
\end{align}
where $\bfV$ and $T$ satisfy boundary conditions  \eqref{eq:bc}.
As such, bounds for $\xi$ solving \eqref{eq:bous:drift:diff:stoch:sys:1} immediately translate to bounds for $T$.

In  \cite{FoldesGlattHoltzRichardsWhitehead2015} we prove:
\begin{Prop}
\label{prop:lets:make:the:most:of:the:moments:that:count}
Suppose that $\bfV \in L^2_{loc}([0,\infty); V_1 \cap (H^2(\DD))^3) \cap C([0,\infty); H_1)$ a.s. and is $\mathcal{F}_t$-adapted.
Fix any $p \geq 2$ and any initial condition $\xi_0 \in H \cap L^p(\DD)$ which is $\mathcal{F}_0$ measurable with
\begin{align*}
   \E \exp( \eta \| \xi_0 \|_{L^p}^2) < \infty,
\end{align*}
for some $\eta > 0$.
Then there exists $\eta_0 = \eta_0(\sigma, \rab, p) > 0$ such that for any $t \geq 0$ and any positive $\eta \leq \eta_0$,
\begin{align}
    \E \exp \left(\frac{\eta}{2^{p/2+2}} \sup_{s \in [0, t]} \|\xi\|_{L^p}^2 \right) \leq C_1 \E \exp \left(  \eta  \|\xi_0\|_{L^p}^{2} + \eta p t (\|\sigma\|_{L^p}^2  + 2^{p/2}( 4\rab^2 + 1))\right)    \label{eq:gen:drift:diff:bnd:1}
\end{align}
for a constant $C = C( \rab,p)$ independent of $t$, $\eta$, $\xi_0$,
and $\bfV$.
Furthermore,
\begin{align}
  \E \exp \left( \frac{\eta}{2^{p/2+2}} \|\xi(t)\|^2_{L^p}  \right)
  \leq C
  \E \exp \left( \eta (e^{- \kappa t} \|\xi_0\|^2)   \right) \,,
      \label{eq:gen:drift:diff:bnd:2}
\end{align}
where again $C = C( \rab,p, \|\sigma\|_{L^p}, \DD)$ and $\kappa = \kappa(\rab, \DD) > 0$ are independent of $t$, $\eta$, $\xi_0$, and $\bfV$.
\end{Prop}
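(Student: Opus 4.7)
My plan is to apply It\^o's formula to the functional $\|\xi(t)\|_{L^p}^p$ and extract both bounds from the resulting pathwise differential inequality via standard exponential martingale techniques in the spirit of \cite{HairerMattingly06, KuksinShirikian12}. The essential point---and the source of the main difficulty---is that the constants appearing in \eqref{eq:gen:drift:diff:bnd:1}--\eqref{eq:gen:drift:diff:bnd:2} must be independent of the advecting field $\bfV$.

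After expanding $d\|\xi\|_{L^p}^p$ using \eqref{eq:bous:drift:diff:stoch:sys:1}, four contributions appear. The transport term $-p\int|\xi|^{p-2}\xi\,\bfV\cdot\nabla\xi\,dx = -\int \bfV\cdot\nabla|\xi|^p\,dx$ vanishes identically by integration by parts, using $\nabla\cdot\bfV = 0$, the no-slip condition $\bfV|_{z=0,1}=0$, and periodicity in $\mathbf{x}$. The viscous term produces $-\tfrac{4(p-1)}{p}\|\nabla|\xi|^{p/2}\|^2$ (with no boundary contribution since $\xi|_{z=0,1}=0$), and applying Poincar\'e to $|\xi|^{p/2}$ yields a coercive lower bound proportional to $-\|\xi\|_{L^p}^p$. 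The It\^o correction is controlled by $\tfrac{p(p-1)}{2}\|\xi\|_{L^p}^{p-2}\|\sigma\|_{L^p}^2$ via H\"older. The remaining term $p\rab\int|\xi|^{p-2}\xi\,v_3\,dx$ is the delicate one: a naive H\"older bound brings in $\|v_3\|_{L^p}$, which cannot be controlled uniformly in $\bfV$.

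To resolve this, I would pass to the physical variable $T = \xi + \rab(1-z)$, which satisfies the pure advection-diffusion equation \eqref{eq:bous:drift:diff:stoch:sys:2} with inhomogeneous Dirichlet data $T|_{z=0} = \rab$, $T|_{z=1} = 0$. A parabolic maximum principle, applied pathwise either to $T$ directly or to the shifted variable $Y = T - \sum_k\sigma_k W^k$ (for which the vanishing of $\sigma_k$ on $z=0,1$ preserves the boundary data), provides pointwise bounds on $T$ in terms of $\|T_0\|_\infty$, $\rab$, and supremum-type quantities of $\sigma W$---independent of $\bfV$. The crucial role of the divergence-free structure together with $\bfV|_{z=0,1}=0$ is that it kills the characteristic boundary flux, so the max principle goes through with constants unaffected by $\bfV$. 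Translating back to $\xi$ via $(a+b)^p \leq 2^{p-1}(a^p + b^p)$ accounts precisely for the factor $2^{p/2}(4\rab^2 + 1)$ appearing in \eqref{eq:gen:drift:diff:bnd:1}.

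With the $v_3$ term handled, the combined energy estimate takes the form $d\|\xi\|_{L^p}^p + c_p\|\xi\|_{L^p}^p\,dt \leq C_p(1 + \rab^p + \|\sigma\|_{L^p}^p)\,dt + dM_t$ with $M_t$ a local martingale of quadratic variation dominated by $\|\xi\|_{L^p}^{2p-2}\|\sigma\|_{L^p}^2$. Applying the exponential martingale inequality to $\exp(\eta(\|\xi\|_{L^p}^p)^{2/p})$ and choosing $\eta_0 = \eta_0(\sigma,\rab,p)$ small enough so that the exponential It\^o correction is absorbed by the dissipation then yields \eqref{eq:gen:drift:diff:bnd:1}. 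The pointwise-in-time refinement \eqref{eq:gen:drift:diff:bnd:2}, with only $\|\xi_0\|$ (the $L^2$ norm) on the right, follows by integrating against the factor $e^{\kappa t}$ and invoking instantaneous smoothing of the heat semigroup: one first proves an $L^2$-based exponential moment bound and then uses a short-time max-principle comparison to upgrade to the $L^p$-norm on the left side. The principal obstacle throughout is the $\bfV$-independence of the $v_3$ estimate, and the reformulation on $T$ together with the maximum principle is what makes this possible.
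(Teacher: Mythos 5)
First, a caveat on the comparison itself: this paper does not actually prove Proposition~\ref{prop:lets:make:the:most:of:the:moments:that:count} — it is imported verbatim from the companion work \cite{FoldesGlattHoltzRichardsThomann2014a}, and the only indication given here of how it is established is the remark that the moment bounds ``require a careful use of the maximum principle along with exponential martingale estimates.'' At the level of ingredients your proposal is therefore aligned with the authors' stated strategy: the $L^p$ It\^o computation, the vanishing of the transport term under $\nabla\cdot\bfV=0$ and the boundary conditions, the coercivity obtained by applying Poincar\'e to $|\xi|^{p/2}$, the identification of the $\rab\, v_3$ term as the sole obstruction to $\bfV$-independence, and the passage to the physical variable $T$ are all correct and well motivated.

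The step you lean on to resolve that obstruction, however, has a genuine gap. You assert that a pathwise maximum principle applied to $Y=T-\sum_k\sigma_kW^k$ yields pointwise bounds on $T$ ``independent of $\bfV$,'' crediting the divergence-free structure and $\bfV|_{z=0,1}=0$. Those properties are what make the \emph{energy} identity drift-free; they do nothing for the maximum principle. The equation for $Y$ carries the source $-\bfV\cdot\nabla(\sigma W)$, which does not vanish at an interior extremum of $Y$ (only $\bfV\cdot\nabla Y$ does), so the comparison argument produces a bound involving $\int_0^t\|\bfV\|_{L^\infty}\|\nabla(\sigma W)\|_{L^\infty}\,ds$ --- precisely the $\bfV$-dependence the proposition forbids. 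To make the maximum principle drift-independent you need a decomposition in which the piece it is applied to solves a \emph{source-free} advection--diffusion equation: for instance $T=T^{(1)}+T^{(2)}$, with $T^{(1)}$ the pathwise deterministic solution carrying the full initial and boundary data (for which $\|T^{(1)}(t)\|_{L^\infty}\le\max(\|T_0\|_{L^\infty},\rab)$ uniformly in $\bfV$), and $T^{(2)}$ a stochastic remainder with homogeneous data, for which the $L^p$ energy and exponential-martingale machinery closes with no $v_3$ term at all. A second, related gap: any maximum-principle output is phrased through $\|T_0\|_{L^\infty}$, whereas the hypothesis only supplies $\xi_0\in H\cap L^p(\DD)$ and the right-hand side of \eqref{eq:gen:drift:diff:bnd:1} involves only $\|\xi_0\|_{L^p}$ (and \eqref{eq:gen:drift:diff:bnd:2} only $\|\xi_0\|$); your sketch does not explain how to descend from the $L^\infty$ comparison to a bound closing in these weaker norms with constants independent of $\bfV$. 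The ``instantaneous smoothing'' you invoke must likewise be quantified uniformly in the drift — plausible via a Moser-type iteration in which only the drift-free $L^q$ energy identities enter, but not automatic, and exactly where the real work lies.
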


We now return to the infinite Prandtl system
\eqref{eq:vel:zero}--\eqref{eq:theta:zero} and recall a bound
analogous to \eqref{eq:gen:drift:diff:bnd:2} but which uses more of
the specific structure of the velocity equation.
\begin{Prop}\label{prop:inf:prnd:bound} Fix an initial condition
  $\Tho_0 \in H$ which is $\mathcal{F}_0$ measurable, and let
  $\Tho=\Tho(t,\Tho_0)$ denote the corresponding solution to
  \eqref{eq:vel:zero}--\eqref{eq:theta:zero}.  There is a universal
  constant $\eta^{*}>0$ such that for any $t>0$ and
  $\eta \in (0,\eta^*]$, there exists $C=C(\ra,\rab)>0$ such that
\begin{align*}
\E\left(\exp\left(\eta\|\Tho\|^{2} 
    + \frac{\eta e^{-t/4}}{4}\int_{0}^{t}\|\nabla\Tho\|^{2}ds\right)\right)
    \leq C \exp\left(\eta (1+4\ra\rab)e^{-t/2}\|\Tho_{0}\|^{2}\right).
\end{align*}
\end{Prop}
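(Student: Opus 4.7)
The plan is to apply It\=o's formula to $\|\Tho\|^{2}$ along solutions of \eqref{eq:theta:zero}, control the resulting drift using the Stokes structure of \eqref{eq:vel:zero}, and then extract the required exponential moment bound via an exponential-martingale argument that is time-weighted so as to simultaneously capture the pointwise $\|\Tho(t)\|^{2}$ contribution and the integral $\int_{0}^{t}\|\nabla\Tho\|^{2}ds$.

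First I would compute $d\|\Tho\|^{2}$ from \eqref{eq:theta:zero}. The transport term $\langle\Uo\cdot\nabla\Tho,\Tho\rangle$ vanishes since $\Uo$ is divergence free and $\Tho$ has a homogeneous Dirichlet trace on $z\in\{0,1\}$; the diffusion gives $-2\|\nabla\Tho\|^{2}dt$; and the It\=o correction from the forcing contributes $\sum_{k}\|\sigma_{k}\|^{2}dt=dt$ by the normalization fixed in the introduction. The buoyancy coupling produces the cross term $2\rab\langle u^{0}_{d},\Tho\rangle dt$, which I would control by testing \eqref{eq:vel:zero} against $\Uo$ to obtain the identity $\|\nabla\Uo\|^{2}=\ra\langle u^{0}_{d},\Tho\rangle$ and then applying Poincar\'e's inequality in $z$ (since $u^{0}_{d}$ vanishes on $z\in\{0,1\}$) to deduce $\|\nabla\Uo\|\leq C\ra\|\Tho\|$, and hence $2\rab|\langle u^{0}_{d},\Tho\rangle|\leq C\ra\rab\|\Tho\|^{2}$. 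This yields the energy inequality
\begin{equation*}
d\|\Tho\|^{2}+2\|\nabla\Tho\|^{2}dt\leq\bigl(C\ra\rab\|\Tho\|^{2}+1\bigr)dt+2\langle\Tho,\sigma dW\rangle,
\end{equation*}
whose martingale part has quadratic variation bounded by $4\|\Tho\|^{2}dt$.

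Next I would apply It\=o's formula to $\exp(\Xi(t))$ where $\Xi(t):=\eta\|\Tho(t)\|^{2}+\tfrac{\eta e^{-t/4}}{4}\int_{0}^{t}\|\nabla\Tho\|^{2}ds$ is exactly the exponent appearing on the left-hand side. Expanding $d\Xi$ combines the energy inequality above with the elementary identity $\tfrac{d}{dt}\bigl(\tfrac{e^{-t/4}}{4}\int_{0}^{t}\|\nabla\Tho\|^{2}ds\bigr)=\tfrac{1}{4}e^{-t/4}\|\nabla\Tho\|^{2}-\tfrac{1}{16}e^{-t/4}\int_{0}^{t}\|\nabla\Tho\|^{2}ds$. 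The exponentiation introduces a further It\=o correction bounded by $2\eta^{2}\|\Tho\|^{2}dt$, which is absorbed by taking $\eta\leq\eta^{*}$ universal (with $\eta^{*}$ a small multiple of $\|\sigma\|^{-2}=1$). For such $\eta$, the combined coefficient of $\|\nabla\Tho\|^{2}$ becomes negative after invoking Poincar\'e, the integrated term is swallowed by the deterministic $-\tfrac{1}{16}e^{-t/4}\int_{0}^{t}\|\nabla\Tho\|^{2}ds$ sink, and the residual drift linear in $\|\Tho\|^{2}$ (of size $\sim\eta\ra\rab$) is balanced against $\tfrac{d}{dt}[(1+4\ra\rab)e^{-t/2}]=-\tfrac{1}{2}(1+4\ra\rab)e^{-t/2}$. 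It is precisely this matching which pins down both the factor $4$ and the decay rate $1/2$. Taking $\mathcal{F}_{0}$-conditional expectation of the resulting super-martingale delivers the bound.

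The main obstacle is this balancing step: for large $\ra\rab$ the drift coefficient $C\ra\rab\|\Tho\|^{2}$ cannot be absorbed by the Poincar\'e dissipation alone, so one has to pay for it through the $(1+4\ra\rab)$ prefactor of the initial data, which is what forces that prefactor to decay only at the slow, $\ra,\rab$-independent rate $1/2$. A cleaner alternative route, presumably the one pursued in \cite{FoldesGlattHoltzRichardsThomann2014a}, is to first invoke Proposition~\ref{prop:lets:make:the:most:of:the:moments:that:count} with $p=2$, $\xi=\Tho$ and adapted divergence-free drift $\bfV=\Uo$ (whose regularity is handed to us by the Stokes estimate \eqref{eq:stk:reg}) to obtain the pointwise bound $\E\exp(c\|\Tho(t)\|^{2})\leq C\E\exp(\eta e^{-\kappa t}\|\Tho_{0}\|^{2})$ directly, and then to recover the integral contribution by applying the time-weighted It\=o argument above only to the $\int_{0}^{t}\|\nabla\Tho\|^{2}ds$ piece and combining the two via H\"older's inequality after an appropriate rescaling of $\eta$.
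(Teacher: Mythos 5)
First, a structural remark: this paper does not actually prove Proposition~\ref{prop:inf:prnd:bound}; it states it and defers the proof to the companion work \cite{FoldesGlattHoltzRichardsThomann2014a}, noting explicitly in the introduction that these moment bounds are ``delicate due to the non-homogenous boundary conditions'' and ``require a careful use of the maximum principle.'' So your proposal must be judged on its own merits, and its primary route contains a genuine gap. Your energy identity is correct up to the treatment of the buoyancy term, and that is exactly where the argument breaks. Testing \eqref{eq:vel:zero} with $\Uo$ gives $\|\nabla\Uo\|^{2}=\ra\langle u^{0}_{d},\Tho\rangle$ and hence $2\rab\langle u^{0}_{d},\Tho\rangle\leq C\ra\rab\|\Tho\|^{2}\leq C\ra\rab\|\nabla\Tho\|^{2}$, but this term has the \emph{wrong sign} (it is nonnegative) and carries no small parameter: once $\ra\rab$ exceeds an absolute threshold it dominates the dissipation $-2\|\nabla\Tho\|^{2}$, and the drift inequality you write down is consistent with $\E\|\Tho(t)\|^{2}$ growing like $e^{c\ra\rab t}$. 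No time-weighted exponential supermartingale can repair this: the right-hand side of the claimed bound converges to the finite constant $C(\ra,\rab)$ as $t\to\infty$, so the proposition asserts a uniform-in-time exponential moment, i.e.\ genuine dissipativity. The prefactor $(1+4\ra\rab)e^{-t/2}$ multiplies only $\|\Tho_{0}\|^{2}$; a factor that decays to zero in front of the initial datum can never ``pay for'' a secular positive drift in the equation for $\|\Tho(t)\|^{2}$. Your own closing paragraph identifies this obstacle, but the proposed resolution (balancing the drift against $\tfrac{d}{dt}[(1+4\ra\rab)e^{-t/2}]$) is not coherent for the reason just given.

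The correct mechanism, which your ``cleaner alternative route'' gestures at but does not carry out, is to abandon the $\Tho$-energy estimate for the pointwise part and work in the variable $T=\Tho+\rab(1-z)$, which satisfies the sourceless advection--diffusion equation \eqref{eq:bous:drift:diff:stoch:sys:2}; there the skew-symmetry of the transport term and the maximum principle (applied after splitting off the stochastic convolution) give $L^{p}$ bounds that are \emph{independent of the advecting field}, which is precisely the content of Proposition~\ref{prop:lets:make:the:most:of:the:moments:that:count}. Invoking that proposition, however, essentially outsources the entire difficulty to the companion paper, and even then your sketch does not close: the bound \eqref{eq:gen:drift:diff:bnd:2} contains no control of $\int_{0}^{t}\|\nabla\Tho\|^{2}ds$, and recovering that term from the energy identity reintroduces $2\rab\int_{0}^{t}\langle u^{0}_{d},\Tho\rangle ds\leq C\ra\rab\int_{0}^{t}\|\Tho\|^{2}ds$ inside the exponential, whose coefficient must be tracked against the universal smallness threshold $\eta^{*}$ and the explicit weights $e^{-t/4}/4$ and $(1+4\ra\rab)$ in the statement. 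Combining the two pieces ``via H\"older after rescaling $\eta$'' is named but not executed, and it is exactly where the specific structure $\Uo=L\Tho$ and the stated constants must be earned.
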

\noindent
The proof of Proposition \ref{prop:inf:prnd:bound} can be found in \cite{FoldesGlattHoltzRichardsWhitehead2015}.

\section{Gradient Estimates on the Markov Semigroup}
\label{sec:grad:est:markov}

In this section we establish the gradient bound for the Markov semigroup generated by \eqref{eq:vel:zero}--\eqref{eq:theta:zero} in
order to prove  \eqref{eq:ASF:type:bnd}.
For this purpose we begin by briefly recalling how \eqref{eq:ASF:type:bnd} is translated to a control problem
through the use of Malliavin calculus.  We refer to e.g. \cite{Nualart2009} or \cite{NourdinPeccati2012} for further general background on this subject
and to \cite{HairerMattingly06, HairerMattingly2011, FoldesGlattHoltzRichardsThomann2013} for the application of this formalism
in a setting close to ours.

Define the random operators
\begin{align}
   \JJ_{0,t} \xi := \lim_{\epsilon \to 0}  \frac{\theta^0(t, \theta_0 + \epsilon \xi,W) - \theta^0(t, \theta_0,W)}{\epsilon}
   \label{eq:J:op}
\end{align}
for any $\xi \in H$ and
\begin{align}
   \AAA_{0,t} w :=  \lim_{\epsilon \to 0}  \frac{\theta^0(t, \theta_0, W + \epsilon \int_0^\cdot w) - \theta^0(t, \theta_0, W)}{\epsilon}
   \label{eq:A:op}
\end{align}
for any $w \in L^2(\Omega; L^2([0,t]; \RR^N))$.    Here $\AAA_{0,t} w = \langle \MD \theta^0, w\rangle$, where the unbounded operator
$\MD: L^2(\Omega; H) \mapsto L^2(\Omega; L^2(0,t, \RR^N) \otimes H)$ is the Malliavin derivative and $w$
is any element in the domain of the dual operator $\delta$ of $\MD$.

For our purposes it is sufficient to recall that any $\mathcal{F}_t$-adapted process in $ L^2(\Omega; L^2([0,t]; \RR^N))$ belongs to the domain of $\delta$ and $\delta(w)$
corresponds to the It\={o} integral of $w$ so that
\begin{align}
  \E \langle \MD X, w\rangle = \E \left( X \int_0^t w dW \right)
  \label{eq:mal:IBP}
\end{align}
for any $X \in \mbox{Dom}(\MD)$ and any $\mathcal{F}_t$-adapted $w$.  This is a special case of the \emph{Malliavin integration by parts formula}.
We furthermore recall that $\MD$ satisfies a chain rule namely  that if $\phi \in C^1(H)$ and $\theta \in \mbox{Dom}(\MD)$ then $\phi(\theta) \in \mbox{Dom}(\MD)$ and
\begin{align}
  \MD \phi(\theta)  = \nabla \phi(\theta) \MD \theta.
  \label{eq:mal:chain}
\end{align}
Combining \eqref{eq:mal:IBP}--\eqref{eq:mal:chain} and making use of the It\={o} isometry we infer that,
\begin{align}
  \nabla P_t^0 \phi(\Trs_0)  \xi
    =& \E \left( \nabla \phi( \theta^0(t, \Trs_0)) \JJ_{0,t} \xi \right)
    = \E \left( \phi( \theta^0(t, \Trs_0)) \int_0^t w dW  \right)
    +\E \left( \nabla \phi( \theta^0(t, \Trs_0)) \left( \JJ_{0,t}  \xi - \AAA_{0,t}w \right) \right)
    \notag\\
    \leq& \sqrt{ P_t^0 (|\phi(\Trs)|^2)} \left(\E \int_0^t |w|^2 dt \right)^{1/2} + \sqrt{ P_t^0 (\|\nabla\phi(\Trs)\|^2)} \left( \E \| \JJ_{0,t}  \xi - \AAA_{0,t}w\|^2 \right)^{1/2}
    \label{eq:fun:cmp}
\end{align}
for any $\phi \in C^1_b(H)$, $\Trs_0 \in H$ and any (adapted) $w \in L^2(\Omega; L^2([0,t]; \RR^N))$.

Our desired bound \eqref{eq:ASF:type:bnd} follows from \eqref{eq:fun:cmp}  if, for every $\xi \in H$ with $\| \xi\| =1$ there is (adapted)
$w = w(\xi) \in L^2([0,\infty); \RR^N)$ such that
\begin{align}
   \E \| \JJ_{0,t}  \xi - \AAA_{0,t}w(\xi)\|^2 &\leq C \exp(2\eta \| \Trs_0\|^2) \delta(t)\,,
   \label{eq:suff:cond:ASF:bnd:2} \\
   \sup_{\|\xi\| = 1} \E \int_0^\infty |w(\xi)|^2 dt &\leq C \exp(2\eta \| \Trs_0\|^2) \,,
   \label{eq:suff:cond:ASF:bnd:1}
\end{align}
where $\delta(t) \to 0$ as $t \to \infty$ and $C$, $\eta$, and $\delta$ are independent of $\theta_0$.

To solve the control problem \eqref{eq:suff:cond:ASF:bnd:2}--\eqref{eq:suff:cond:ASF:bnd:1} we observe that \eqref{eq:J:op} and \eqref{eq:A:op} admit explicit characterizations
as linearizations of \eqref{eq:vel:zero}--\eqref{eq:theta:zero}.
For any $\xi \in H^0$ we let $\rho(t) = \rho(t,\xi) := \JJ_{0,t} \xi$, which satisfies
\begin{align}
 &\partial_t \rho + \Uo \cdot \nabla \rho +  \bfV^0 \cdot \nabla \theta^0  = \rab \cdot v^{0}_d  + \Delta \rho, \quad
 - \Delta \bfV^0 = \nabla p + \Ra \hatk \rho,
 \quad
 \nabla \cdot \bfV^0 = 0\,,
 \quad
 \rho(0) = \xi,
\label{eq:grad:system:0}
\end{align}
supplemented by boundary conditions as in
\eqref{eq:bc:homo}.\footnote{ Notice that \eqref{eq:grad:system:0} can
  also be written as
\begin{align*}
 &\partial_t \rho + (L \theta^0) \cdot \nabla \rho +  (L \rho) \cdot \nabla \theta^0  = \rab (L \rho)  + \Delta \rho,
 \quad
 \rho(0) = \xi,
\end{align*}
where $L = \ra A^{-1} P \hat{k}$ and $A$ is the Stokes operator, $P$;
cf. \eqref{eq:stokes:steady} and \eqref{eq:theta:zero:Juraj:form}
above.  Similar formulations can also be given for
\eqref{eq:grad:system:1}, \eqref{eq:grad:system:2}.} On the other
hand, setting $\tilde{\rho} := \AAA_{0,t}w$ for any
$w \in L^2([0,t], \RR^N)$ we have
\begin{align}
 &\partial_t \tilde{\rho} + \Uo \cdot \nabla \tilde{\rho} +  \tilde{\bfV}^0 \cdot \nabla \theta^0  = \rab \cdot v^{0}_d  + \Delta \tilde{\rho} + \sum_{k =1}^N \sigma_k w_k,
 \;
 -\Delta \tilde{\bfV}^0 = \nabla p + \Ra \hatk \tilde{\rho},
 \; \nabla \cdot \tilde{\bfV}^0 = 0\,, \;
 \tilde{\rho}(0) = 0,
\label{eq:grad:system:1}
\end{align}
again with boundary conditions as in \eqref{eq:bc:homo}.

Denote $\bar{\rho}(t) =  \bar{\rho}(t,\xi, w) = \rho - \tilde{\rho}$
and $\bar{\bfV} :=  \bfV - \tilde{\bfV}$
for any $w \in L^2([0,\infty);\RR^N)$ and $\xi \in H$.
We now choose  $w$ as a function of $\xi$  as follows.
 Let $P_N$ be the projection on the first $N$ eigenfunctions of the Laplacian  with boundary
conditions as in \eqref{eq:bc:homo}.
Set $w(t) := \sigma^{-1} \lambda P_N \bar{\rho}$, where
$\lambda > 0$ and $N$ will be selected below.\footnote{Of course the choice of $N$
will determine the number of modes subject to stochastic perturbation. Observe that $w$ is well defined as $\{\sigma_k\}_{k = 1}^N$ is the set
of the first  $N$ (nonzero) eigenvectors of the Laplacian. }
Relative to this choice of $w = w(\xi)$, $\bar{\rho}$ satisfies
\begin{align}
 &\partial_t \bar{\rho} + \Uo \cdot \nabla \bar{\rho} +  \bar{\bfV}^0 \cdot \nabla \theta^0  = \rab \cdot \bar{v}^{0}_d  + \Delta \bar{\rho} -  \lambda P_N \bar{\rho},
 \quad
 \Delta \bar{\bfV}^0 = \nabla p + \Ra \hatk \bar{\rho},
 \quad
 \nabla \cdot \bar{\bfV}^0 = 0,
 \quad
 \bar{\rho}(0) = \xi.
\label{eq:grad:system:2}
\end{align}
Testing  \eqref{eq:grad:system:2} with $\bar{\rho}$ and $\bar{\bfV}^0$ respectively, and using that both $\Uo$ and $\bar{\bfV}^0$ are divergence free vector fields,
 we obtain
\begin{align}
  \frac{d}{dt} \|\bar{\rho}\|^2 + 2\|\nabla \bar{\rho}\|^2 + 2\lambda \| P_N \bar{\rho}\|^2 = 2\int_\DD \left(\rab \bar{v}^{0}_d -\bar{\bfV}^0 \cdot \nabla \theta^0\right) \bar{\rho} dx
  \label{eq:sim:ASF:est:1}
\end{align}
and
\begin{align}\label{eq:est:vbz}
\|\nabla \bar{\bfV}^0\| \leq \Ra \|\bar{\rho}\|.
\end{align}
With standard Sobolev embeddings and \eqref{eq:est:vbz} we have, for any $\eta>0$,
\begin{align}
  \left| \int_\DD  \left(\rab  \bar{v}^{0}_d -\bar{\bfV}^0 \cdot \nabla \theta^0\right) \bar{\rho} dx  \right|
  &\leq \| \bar{\bfV}^0 \|_{L^6} \| \nabla \theta^0 \| \| \bar{\rho} \|_{L^3} + \rab \| \bar{\bfV}^0 \| \| \bar{\rho} \| \notag\\
  &\leq \tilde{C}\| \nabla \bar{\bfV}^0 \| \| \nabla \theta^0 \| \| \bar{\rho} \|^{1/2} \|\nabla \bar{\rho}\|^{1/2} + \rab\| \nabla \bar{\bfV}^0 \| \|\bar{\rho}\|
  \notag\\
    &\leq \tilde{C}\ra \| \nabla \theta^0 \| \| \bar{\rho} \|^{3/2} \|\nabla \bar{\rho}\|^{1/2} + \tilde{C} \ra\rab \|\bar{\rho}\|^{2}
  \notag\\
  &\leq   \|\nabla \bar{\rho}\|^{2} + (\tilde{C} (\ra)^{4/3} \| \nabla \theta^0 \|^{4/3} + \tilde{C}\ra\rab) \| \bar{\rho} \|^{2} \notag \\
  &\leq \|\nabla \bar{\rho}\|^{2} +  (\eta \| \nabla \theta^0 \|^{2} + C) \| \bar{\rho} \|^{2} \,,
    \label{eq:sim:ASF:est:2}
\end{align}
where $C = C(\Ra, \rab, \eta)=\frac{\tilde{C}\ra^{4}}{\eta^{2}} + \tilde{C} \ra\rab$ and $\tilde{C}$ is a universal
constant.
Also since $P_N$ and $-\Delta$ commute we have for $Q_N := I - P_N$
\begin{equation}\label{eq:lbd:rbr}
\|\nabla \bar{\rho}\|^2 = - \langle P_N \bar{\rho}, \Delta P_N \bar{\rho} \rangle - \langle Q_N \bar{\rho}, \Delta Q_N \bar{\rho} \rangle
= \|\nabla P_N\bar{\rho}\|^2 + \|\nabla Q_N\bar{\rho}\|^2 \geq
\|\nabla Q_N\bar{\rho}\|^2 \geq \lambda_N \|Q_N\bar{\rho}\|^2 \,,
\end{equation}
where the last inequality follows from the generalized Poincar\' e inequality.
Choose $2\lambda =  \lambda_N$ (with $N$ to be chosen below)
and combine \eqref{eq:sim:ASF:est:1} and \eqref{eq:sim:ASF:est:2} to infer
\begin{align*}
  \frac{d}{dt} \|\bar{\rho}\|^2 + ( \lambda_N - (\eta_0 \| \nabla \theta^0 \|^{2} + C))  \| \bar{\rho}\|^2 \leq 0,
\end{align*}
and hence, since  $\bar{\rho}(0) =  \xi$,
\begin{align}
\|\bar{\rho}(t)\|^2 \leq \|\xi\|^2 \exp \left( \eta_0 \int_0^t \| \nabla \theta^0 \|^{2}
\, dr + (C - \lambda_N) t \right)\,.
\label{eq:bar:rho:conclus}
\end{align}
Applying Proposition \ref{prop:lets:make:the:most:of:the:moments:that:count} we conclude that,
for any $\Tho_0 \in H$, and $\eta\in(0,\eta_0]$,
\begin{align*}
\E \|\bar{\rho}(t)\|^2 \leq C \|\xi\|^2 \exp\left( \eta \|\Tho_0\|^2 +  (C + \eta - \lambda_N) t \right) \,,
\end{align*}
where $C = C(\Ra, \rab)$ is independent of $\xi$ and $\Tho_0$ and $t \geq 0$.
By now choosing $N$ large enough such that $\lambda_N > 2( C + \eta \| \sigma\|^2)$ we obtain
\begin{align}
\E \|\bar{\rho}(t)\|^2 \leq C \|\xi\|^2 \exp\left( \eta \|\Tho_0\|^2  - \frac{\lambda_N}{2} t \right) \,,
\label{eq:rhb:est}
\end{align}
where $C = C(\Ra, \rab)$ is independent of $\xi$ and $\Tho_0$ and $t \geq 0$.
This yields the first bound \eqref{eq:suff:cond:ASF:bnd:2}.

To obtain the second desired bound, \eqref{eq:suff:cond:ASF:bnd:1}, we use
\eqref{eq:rhb:est} to estimate
\begin{align*}
\E  \int_0^\infty |w(\xi)|^2 dt =& \|\sigma^{-1} \|^2 \lambda_N^2 \E  \int_0^\infty \|P_N \bar{\rho}\|^2 \, dt \leq
C  \exp( \eta \|\Tho_0\|^2) \,,
\end{align*}
where $C = C(\lambda_N, \ra, \rab)$ is independent of $\Tho_0$
yielding \eqref{eq:suff:cond:ASF:bnd:1}.  The bound \eqref{eq:ASF:type:bnd} now follows.

\begin{Rmk}
We can use the same argument leading to \eqref{eq:bar:rho:conclus} to show that
\begin{align*}
\|\rho(t)\|^2 \leq \|\xi\|^2 \exp \left( \eta \int_0^t \| \nabla \theta^0 \|^{2}
\, dr + C t \right)\,,
\end{align*}
that is, for any $\eta>0$,
\begin{align}
\|\JJ_{0,t}\| \leq \exp \left( \eta \int_0^t \| \nabla \theta^0 \|^{2}
\, dr + Ct \right)\,,
\label{eq:J:bound}
\end{align}
where, as above,  $C = C(\ra, \rab)=\frac{\tilde{C}\ra^{4}}{\eta^{2}}+ \ra\rab$.
\end{Rmk}

\begin{Rmk}
Using Proposition \ref{prop:inf:prnd:bound} and \eqref{eq:J:bound} we can easily establish
the Lyapunov bound \eqref{eq:decay:exp:norm}  with
\begin{align*}
C_1=\exp\left(\frac{C\ra^4e^{t^*/2}}{\eta^2}+\ra\rab\right).
\end{align*}
\end{Rmk}

\section*{Acknowledgments}
The majority of this work was carried out during a Research in Peace
(RIP) fellowship at the Institut Mittag-Leffler, Stockholm, Sweden.
We are grateful to this institution for providing us with a unique
working environment to continue our ongoing collaborations.  JF and GR
would like to acknowledge Virginia Tech for facilitating a research
visit during which this work was finalized.  We would also like to
extend our warm thanks to P. Constantin, C. Doering, S. Friedlander,
J. Mattingly, E. Thomann, and J. Whitehead for stimulating feedback on
this work. NEGH was partially supported in this work under the grants
NSF-DMS-1313272, Simons-515990.

\addcontentsline{toc}{section}{References}
\begin{footnotesize}
\bibliographystyle{plain}

\end{footnotesize}

\newpage

\vspace{.3in}
\begin{multicols}{2}
\noindent
Juraj F\"oldes\\
{\footnotesize Department of Mathematics\\
University of Virginia\\
Web: \url{http://www.people.virginia.edu/~jf8dc/}\\
 Email: \url{foldes@virginia.edu}} \\[.25cm]
Nathan Glatt-Holtz\\ {\footnotesize
Department of Mathematics\\
Tulane University\\
Web: \url{http://www.math.tulane.edu/~negh/}\\
 Email: \url{negh@tulane.edu}} \\[.2cm]

\columnbreak

 \noindent Geordie Richards\\
{\footnotesize
Department of Mechanical and Aerospace Engineering\\
Utah State University\\
Web: \url{www.geordierichards.com}\\
 Email: \url{geordie.richards@usu.edu}}\\[.2cm]
 \end{multicols}

\end{document}